\newtheorem{thm}{Theorem}
\newtheorem*{clm*}{Claim}
\newtheorem{lem}{Lemma}[section]
\newtheorem{rmk}{Remark}[section]
\newtheorem{prop}[lem]{Proposition}
\newtheorem{obs}[lem]{Observation}
\newtheorem*{obs*}{Observation}
\newtheorem*{defin*}{Definition}
\newtheorem{cor}[lem]{Corollary}
\newtheorem*{cor*}{Corollary}
\theoremstyle{remark}
\newcommand{\ds}{\displaystyle}
\newcommand{\C}{\mathbb C}
\newcommand{\sinc}{\mathrm{sinc}}
\newcommand{\Ima}{\mathrm{Im}\,}
\newcommand{\Rea}{\mathrm{Re}\,}
\newcommand{\var}{\mathrm{var}\,}
\newcommand{\cov}{\mathrm{cov}\,}
\newcommand{\ind}{1{\hskip -2.5 pt}\hbox{I}}
\newcommand{\wh}{\widehat}
\newcommand{\ov}{\overline}
\newcommand{\norm}[1]{\left\Vert#1\right\Vert}
\newcommand{\conv}[1]{\overset{#1}{\longrightarrow}}
\newcommand{\B}{\bf B}
\newcommand{\OU}{\bf OU}
\newcommand{\W}{\bf W}
\newcommand{\R}{\mathbb R}
\newcommand{\E}{\mathbb E}
\newcommand{\N}{\mathbb N}
\newcommand{\Z}{\mathbb Z}
\newcommand{\Pro}{\mathbb P}
\newcommand{\lm}{\lambda}
\newcommand{\ep}{\varepsilon}
\newcommand{\al}{\alpha}
\newcommand{\D}{\Delta}
\newcommand{\calN}{\mathcal{N}}
\newcommand{\V}{\var[\D(T)]}
\title{The winding of stationary Gaussian processes}
\author{Jeremiah Buckley
\thanks{King's College, London, email: jeremiah.buckley@kcl.ac.uk. Research supported by funding from the European Research Council under the European Union's Seventh Framework Programme (FP7/2007-2013), ERC grant agreement n$^{\text{o}}$ 335141}
\hspace{2pt}  and Naomi Feldheim
\thanks{Stanford University, email: naomifel@stanford.edu. Research supported in part by the United States - Israel Binational Science Foundation, grant 2012037, and by NSF funds while the author was holding postdoctoral fellowships at the IMA Minnesota and at Stanford University.}
}
\begin{document}

\renewcommand{\thefootnote}{\fnsymbol{footnote}} 
\footnotetext{\emph{Key words:} Gaussian process, stationary process, fluctuations of zeroes, winding number}     
\footnotetext{\emph{Math. Subject Class. 2010:} 60G15, 60G10, 30E99}     
\renewcommand{\thefootnote}{\arabic{footnote}}

\maketitle

\begin{abstract}
This paper studies the winding of a continuously differentiable Gaussian stationary process $f:\R\to\C$ in the interval $[0,T]$.
We give formulae for the mean and the variance of this random variable.
The variance is shown to always grow at least linearly with $T$, and conditions for it
to be asymptotically linear or quadratic are given.
Moreover, we show that if the covariance function together with its
 second derivative are in $L^2(\R)$, then the winding obeys a central limit theorem.
These results correspond to similar results for zeroes of real-valued stationary Gaussian functions by Cuzick, Slud and others.
\end{abstract}


\section{Introduction}

Gaussian functions on various spaces, and in particular stationary functions (i.e., those functions whose distribution is invariant under shifts), have long been an object of extensive study. Real Gaussian stationary functions $f:\R\to\R$ are a classical model of random signals, and in particular much effort was devoted to the study of their zeroes~\cites{AT, CL}. More recently, zeroes of complex Gaussian functions $f:\C\to\C$ attracted attention, as they are interesting point processes with intrinsic repulsion~\cite{GAFbook}.

In this paper we study the winding, or the increment of the argument, of planar Gaussian stationary processes $f:\R\to\C$. In light of the argument principle, one might expect winding to be the appropriate analogue in this setting of zeroes in the aforementioned examples; indeed our results and methods are closely related to the corresponding ones for random zeroes, both in the real~\cite{Slud} and complex~\cite{Feld} settings. In this sense, this work is part of an effort to simplify, unify and generalize the tools
which are used for analysing random zeroes.

In addition, this work is also motivated by a long history of works concerning the winding of various planar processes. Winding is used to model the entanglement of polymers~\cite{GF}, and the movement of a particle under a random magnetic field~\cite{DK}. Limit laws and asymptotic behavior of the winding were studied for Brownian motion~\cites{Spitzer, PY1} and certain fractal curves~\cite{WW}, among others. However, perhaps surprisingly, the winding of Gaussian stationary processes appears to be a topic that has been largely ignored. Prior to this work, we know only of a paper by Le-Doussal, Etzioni and Horovitz \cite{DEH} which provides predictions and intriguing examples regarding the nature of the fluctuations of the winding. Their interest was inspired by their research on the winding of particles in random environments~\cite{EHD2012}. The present paper establishes and extends their predictions. More about background and motivation may be found in Section~\ref{sec: discuss}.

We now give a brief overview of our results. Denote by $\Delta (T)$ the winding (or increment of the argument) of $f$ around the origin in the ``time''-interval $[0,T]$. We develop an asymptotic formula for the variance $V(T)=\V$ of the winding of $f$ in ``time'' $[0,T]$ (Theorem~\ref{thm: var}). By analysing this formula, we show that $V(T)$ is always at least linear in $T$ (Theorem~\ref{thm: asym}). Then we prove that if the covariance function and its second derivative are in $L^2$, then $V(T)$ is asymptotically linear in $T$ and a central limit theorem holds (Theorem~\ref{thm: CLT}). Finally we show that if the spectral measure of the Gaussian process $f$ does not contain any atoms, then $V(T)$ is sub-quadratic (Theorem~\ref{thm: sub-quad}).

\subsection{Definitions}
A \emph{standard complex Gaussian}, denoted $\calN_\C(0,1)$, is a $\C$-valued random variable whose distribution has density $ \frac 1 \pi e^{-|z|^2}$ against Lebesgue measure
on the plane. A \emph{complex Gaussian vector} is a random vector in $\C^n$ that is equal in distribution to $A\mathbf{v}$, where $\mathbf{v}$ is a random vector in $\C^m$ whose components are i.i.d. $\calN_\C(0,1)$-distributed, and $A$ is an $n \times m$ matrix (we always consider \emph{centred} random variables and processes, i.e., having mean $0$).

A \emph{complex Gaussian process} $f:\R\to\C$ is a random process whose finite marginals are Gaussian vectors; that is, for any $n\in \N$ and any $t_1,\dots, t_n\in \R$ the vector $(f(t_1), \dots, f(t_n))$ is a complex Gaussian vector.
Such a process is \emph{stationary} if its distribution is invariant under all real shifts, that is, for any $n\in\N$, $t_1,\dots, t_n\in \R$ and $s\in \R$ the vectors
$(f(t_1),\dots, f(t_n))$ and $(f(t_1+s),\dots, f(t_n+s) )$ have the same distribution. We will write GSP to denote a Gaussian stationary process throughout this article.

It is well-known that a GSP is determined by its \emph{covariance kernel} $r:\R\to \C$, given by
\[
r(t)=\E [f(t) \overline{f(0)} ].
\]
We normalize the process to have unit variance, that is, $r(0)=1$, which implies that $|r(t)|\leq1$ for all $t$ (see \eqref{eq: r<1}). We assume throughout that $r$ is continuous (in fact, we will assume \eqref{eq: log moment} below which is much stronger). Since $r$ is positive-definite and continuous, it follows from Bochner's theorem that it is the Fourier transform of some probability measure $\rho$ on the real line;
\begin{equation}\label{eq: rho}
r(t) = \wh{\rho}(t)=\int_\R e^{-i t \lambda}d\rho(\lambda).
\end{equation}
The measure $\rho$ is called the \emph{spectral measure} of $f$. Throughout this article we assume that for some $\al>0$
\begin{equation}\label{eq: log moment}
  \int_{\R} \lm^2 \log^{1+\al} (1+|\lm|) d\rho(\lm)<\infty.
\end{equation}
This condition ensures that $f$ is a.s. continuously differentiable, and is in fact close to being necessary (see \cite{AT}*{Ch. 1.4.1}). This condition also implies that $r$ is twice differentiable and that (see \cite{CL}*{Chapter 9, Lemma 1}) for all $\beta\leq\al$ and $C>0$ we have
\begin{equation}\label{eq: r'' at 0}
  |r''(t)-r''(0)|\leq\frac{C}{|\log|t||^{\beta}}\quad\text{for }|t|\leq\delta(\beta,C).
\end{equation}
This in turn implies that $r''$ is continuous on all of $\R$.

We define the \emph{winding} (or \emph{the increment of the argument}) of $f$ in the interval $[0,T]$ to be
\begin{equation}\label{eq: def D}
\D(T) = \frac 1 {2i} \int_{0}^T \left[ \frac {f^\prime(t)}{f(t)} - \overline{\left(\frac {f^\prime(t)}{f(t)} \right)}  \right] dt.
\end{equation}
This definition becomes clear when we recall that
almost surely $f$ does not vanish on the interval $[0,T]$, and so a branch of $\log f$ may be defined locally.
Then the infinitesimal change in the increment of the argument is given by
\[
d( \text{arg} f(t)) = d( \Ima \log f(t)) = \Ima  \frac {f'(t)}{f(t)}\,dt,
\]
which does not depend on the branch of $\log f$ we have chosen.

A GSP is \emph{degenerate} if its spectral measure consists of exactly one
atom $\delta_\phi$. In this case, the process may be represented as $f(t)=\zeta \cdot e^{-i\phi t}$ where $\zeta$ is a complex Gaussian random variable.
Thus $f$ moves along a circle with random radius and random starting phase, but
with constant (non-random) angular speed. The winding is thus deterministically $\Delta(T)= -\phi T$, and we
exclude this case from our studies.

\subsection{Results}
In all of our results we assume that $f:\R\to\C$ is a non-degenerate GSP
whose spectral measure obeys condition~\eqref{eq: log moment}.
The first result gives explicit formulae for the mean and variance of $\Delta(T)$.
\begin{thm}\label{thm: var}
For any $T>0$ we have:
\begin{enumerate}
\item
$\E[\D(T)] = T \ \Ima r'(0)$.
\item Denoting $R(x)=\frac {r'}{r}(x)$ for $x$ such that $r(x)\neq0$, we define $K:\R\to\R$ by
\begin{equation}\label{eq: K}
K(x)=\begin{cases}
\frac 1 2 |r'(x)|^2, & \text{if } |r(x)|=0\text{ or }1,\\
 \frac{|r(x)|^2}{1-|r(x)|^2}\ \Ima^2 \left\{R(x)-R(0) \right\} - \frac 1 2 \log \left( \frac 1 {1-|r(x)|^2}\right) \Rea\{R'{(x)}\},
&  \text{if } 0 < |r(x)| < 1.
\end{cases}
\end{equation}
Then $K$ is integrable on any compact subset of $\R$, and
\begin{equation}\label{eq: var integral in statement}
\var [\D(T) ] = T \int_{-T}^T \left( 1-\frac{|x|}{T} \right) K(x) dx.
\end{equation}
\end{enumerate}
\end{thm}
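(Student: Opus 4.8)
The plan is to treat the mean and the variance separately, in each case reducing to a computation with the finite–dimensional complex Gaussian vector assembled from the values and derivatives of $f$. The feature I will use throughout is that, since a complex Gaussian vector is by definition $A\mathbf v$ with $\mathbf v$ having i.i.d.\ standard (hence circularly symmetric) components, every finite marginal of $f$ is \emph{proper}: $\E[f(s)f(t)]=0$ for all $s,t$, so all joint laws are determined by the Hermitian covariances, which in terms of $r$ read $\E[f(s)\ov{f(t)}]=r(s-t)$, $\E[f'(s)\ov{f(t)}]=r'(s-t)$ and $\E[f'(s)\ov{f'(t)}]=-r''(s-t)$; from $r(-t)=\ov{r(t)}$ one also gets that $r'(0)$ is purely imaginary and $r''(0)$ real. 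For (1), put $\psi(t)=\Ima\frac{f'(t)}{f(t)}$, so that $\D(T)=\int_0^T\psi(t)\,dt$ and, by stationarity of $(f(t),f'(t))$, $\E[\D(T)]=T\,\E[\psi(0)]$. I would compute $\E[\psi(0)]$ by regressing $f'(0)=r'(0)f(0)+g$ with $g$ proper Gaussian independent of $f(0)$, whence $\frac{f'(0)}{f(0)}=r'(0)+\frac{g}{f(0)}$. Passing to polar coordinates $f(0)=\varrho e^{i\theta}$, properness makes $\theta$ uniform and independent of $(\varrho,g)$; conditioning on $(\varrho,\theta)$ and using $\E[g]=0$ annihilates the second term (integrability is clear since $\E[|f(0)|^{-1}]<\infty$ and $g\in L^1$), giving $\E[\psi(0)]=\Ima r'(0)$.

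For (2), I first write $\var[\D(T)]=\int_0^T\!\!\int_0^T\cov(\psi(s),\psi(t))\,ds\,dt$; stationarity makes the integrand a function of $x=s-t$ alone, and the usual reduction of an integral over the square yields $\var[\D(T)]=T\int_{-T}^T(1-|x|/T)\,c(x)\,dx$ with $c(x):=\cov(\psi(x),\psi(0))$. Everything then rests on showing $c(x)=K(x)$ (which, once $K$ is known to be locally integrable, also legitimises Fubini above). Fix $x$ with $0<|r(x)|<1$, write $\psi(0)=\Ima(f'(0)\ov{f(0)})/|f(0)|^2$ and likewise for $\psi(x)$, and condition on $(f(0),f(x))=(w_0,w_x)$. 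Conditionally, $(f'(0),f'(x))$ is proper Gaussian with means $\mu_0,\mu_x$ given by the linear regression onto $(w_0,w_x)$ and with a \emph{constant} conditional covariance; set $q=\E[(f'(0)-\mu_0)\ov{(f'(x)-\mu_x)}]$. Taking conditional expectations splits the second moment cleanly (the mean–fluctuation cross terms vanish) into
\[
\E[\psi(0)\psi(x)]=\E\!\left[\frac{\Ima(\mu_0\ov{w_0})\,\Ima(\mu_x\ov{w_x})}{|w_0|^2|w_x|^2}\right]+\tfrac12\,\Rea\!\left(q\,\E\!\left[\tfrac{1}{f(0)\ov{f(x)}}\right]\right),
\]
where the second term uses the proper–Gaussian identity $\cov\big(\Ima(\xi_0\ov{w_0}),\Ima(\xi_x\ov{w_x})\big)=\tfrac12\Rea(\E[\xi_0\ov{\xi_x}]\,\ov{w_0}w_x)$.

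The crux is the two Gaussian integrals over $(f(0),f(x))$. The fluctuation integral is the source of the logarithm: conditioning once more and reducing to polar coordinates turns $\E[(f(0)\ov{f(x)})^{-1}]$ into a Frullani integral $\int_0^\infty \frac{e^{-s}-e^{-s/(1-|r(x)|^2)}}{s}\,ds$, producing $\frac{1}{\ov{r(x)}}\log\frac{1}{1-|r(x)|^2}$; after expressing $q$ through $r,r',r''$ this should collapse to $-\tfrac12\log\frac{1}{1-|r(x)|^2}\,\Rea\{R'(x)\}$. The mean integral is a ratio of a quadratic in $(w_0,w_x)$ to $|w_0|^2|w_x|^2$; evaluating it with elementary Gaussian moments and subtracting $\E[\psi]^2=\Ima^2 r'(0)=\Ima^2 R(0)$ to pass from the second moment to the covariance should produce $\frac{|r(x)|^2}{1-|r(x)|^2}\Ima^2\{R(x)-R(0)\}$, the constant $R(0)$ entering precisely through this subtraction.

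The main obstacle is this explicit four–real–dimensional integration together with the boundary analysis. For local integrability I would expand near $x=0$: non-degeneracy gives $1-|r(x)|^2\sim\sigma^2x^2$ with $\sigma^2=-r''(0)-\Ima^2 r'(0)>0$, so the logarithmic factor is $O(\log|x|)$ and integrable, while $R(x)-R(0)=R'(0)x+o(x)$ with $R'(0)=r''(0)-r'(0)^2$ real forces $\Ima\{R(x)-R(0)\}=o(x)$ and thus tames the $\frac{1}{1-|r(x)|^2}\sim x^{-2}$ singularity of the first term; the differentiability and modulus of continuity needed for these expansions are exactly those furnished by \eqref{eq: r'' at 0}. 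At a point where $r(x)=0$ one must check separately that $K$ extends continuously with the value $\tfrac12|r'(x)|^2$ recorded in the first line of \eqref{eq: K}, by tracking both terms as $|r(x)|\to0$.
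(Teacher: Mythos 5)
Your overall route is essentially the paper's: both arguments rest on the regression representation of the jointly Gaussian vector $(f(0),f(x),f'(0),f'(x))$, both extract the logarithm from a Frullani integral (your computation of $\E[(f(0)\ov{f(x)})^{-1}]=\tfrac{1}{\ov{r(x)}}\log\tfrac{1}{1-|r(x)|^2}$ is exactly the mechanism hidden inside the paper's Lemma~\ref{lem: K}(c)), both pass from the double to the single integral by the change of variables of Lemma~\ref{lem: int}, and both establish local integrability by Taylor expansion using $1-|r(x)|^2\simeq x^2$ and the fact that $R'(0)=r''(0)-r'(0)^2$ is real. The only organisational difference is that you condition on $(f(0),f(x))$ and split $\Ima(f'/f)$ into a regression-mean part and a fluctuation part, whereas the paper expands $\cov$ into the four terms $\cov(f'/f,f'/f)$, $\cov(f'/f,\ov{f'/f})$, etc., and quotes Kahane-type formulae (Lemma~\ref{lem: K}(b),(c)) for each. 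Your bookkeeping is arguably cleaner for the imaginary part, though the ``elementary Gaussian moments'' needed for your mean term include quantities like $\E\bigl[\tfrac{w_x\ov{w_0}}{w_0\ov{w_x}}\bigr]$, which are no more elementary than the omitted computations in the paper's lemma.

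There is one genuine gap: your parenthetical claim that local integrability of $K$ ``legitimises Fubini''. To write $\var[\D(T)]=\iint_{[0,T]^2}\cov(\psi(s),\psi(t))\,ds\,dt$ you must interchange $\E$ with the double integral, and for that you need the \emph{absolute} integrability
\[
\int_0^T\!\!\int_0^T \E\left[\left|\frac{f'(s)\,f'(t)}{f(s)\,f(t)}\right|\right]ds\,dt<\infty ,
\]
which does not follow from integrability of the signed kernel $K(s-t)$ (a priori $\E[(\int_0^T\psi)^2]$ could even be infinite while the iterated integral of the covariance converges). This requires negative-moment estimates of the form $\E[|f(s)f(t)|^{-p}]\lesssim(1-|r(s-t)|^2)^{-\gamma}$ together with the fact that $\{|r|=1\}$ has measure zero; the paper supplies exactly this as Lemma~\ref{lem: Fub}. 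Relatedly, your integrability analysis treats only $x=0$ and the zeros of $r$: you also need to handle the other points of $D=\{x:|r(x)|=1\}$, which requires knowing (Lemma~\ref{lem: |r|=1}) that $D$ is discrete, forces the spectral measure onto an arithmetic progression, and that $1-|r|^2$ vanishes to exactly second order at each point of $D$ by non-degeneracy. Both gaps are fillable, but neither is automatic.
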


\begin{rmk}
{\rm
It is not hard to see that $K(x)$ is continuous at the points where $r(x)=0$ (which may be a large set). On the other hand, there is no natural definition of $K(x)$ at the points where $|r(x)|=1$, and we have assigned the value $\frac 1 2 |r'(x)|^2$ purely for convenience. In the course of the proof we will show that these points are isolated and that $K$ has a logarithmic, integrable singularity at each of them.
}
\end{rmk}

\begin{rmk}
{\rm
One may check that the kernel $K$ is always non-negative, but we will not reproduce the calculations here since they will not be important for our purposes. An alternative form for the variance, which may be more convenient for applications, will be given in the course of the paper (see Proposition~\ref{prop: IBP} below) - the kernel $\widetilde{K}$ given there is trivially non-negative.
}
\end{rmk}

\begin{rmk}\label{rmk: sym}
{\rm
The expression in Theorem~\ref{thm: var} simplifies under the assumption that $r$ is real-valued, which means that $r(t)=r(-t)$ for all $t\in\R$, or equivalently that $\rho$ is symmetric (i.e., $\rho(-I)=\rho(I)$ for all Borel subsets $I\subset\R$). This is sometimes called \emph{reflectional symmetry}. In this case (using the reformulation given by Proposition \ref{prop: IBP}) we get
\begin{align*}
\frac{\V}{T}
&= \int_{-T}^T \left( 1-\frac{|x|}{T} \right) \frac {r'(x)^2}{1-r(x)^2} dx + O\left(\frac1T\right), \: \text{ as } T\to\infty.
\end{align*}
In particular,
\[
\frac{\V}{T} =  \int_{-\infty}^\infty \frac {(r')^2}{1-r^2} + o(1), \: \text{ as } T\to\infty,
\]
provided that this infinite integral is finite. This coincides with predictions in a physics paper of Le Doussal, Etzioni and Horovitz~\cite{DEH}.
They also noticed the following simplification in this case: denoting $\theta (x) = \arcsin r(x)$ ($\theta$ is well-defined, since $r$ is now real-valued), we have
\[
\frac{\V}{T} = \int_{-\infty}^\infty (\theta')^2   + o(1), \: \text{ as } T\to\infty.
\]
}
\end{rmk}

\begin{rmk}
{\rm
Although the main focus of this paper is the ``large-time'' asymptotic behaviour, Le Doussal, Etzioni and Horovitz~\cite{DEH} also mention the short-time asymptotics of $\V$. Our result implies that $\V\sim(r'(0)^2-r''(0))T\log\tfrac1T$ as $T\to0$ and further terms in the asymptotic expansion may be obtained if one assumes some extra regularity - the existence of higher order derivatives of $r$. In Lemma~\ref{lem: |r|=1} we show that $r'(0)^2-r''(0)>0$.
}
\end{rmk}

Our next theorem states that the variance always grows at least linearly.
\begin{thm}\label{thm: asym}
There exists $C>0$ (depending on the covariance function $r$) such that
\[
\V \ge C \cdot T.
\]
\end{thm}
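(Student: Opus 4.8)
The plan is to read off the lower bound directly from the exact formula \eqref{eq: var integral in statement} of Theorem~\ref{thm: var}, using that the kernel $K$ is non-negative (as noted in the remark after the theorem). Writing
\[
\frac{\V}{T}=\int_{-T}^T\left(1-\frac{|x|}{T}\right)K(x)\,dx ,
\]
observe that the weight $1-|x|/T$ is non-negative on $[-T,T]$, so for any fixed $A>0$ and every $T\ge 2A$ we may discard the contribution of $|x|>A$ (where the integrand is $\ge 0$) and bound $1-|x|/T\ge 1-A/T\ge\tfrac12$ on $[-A,A]$, giving
\[
\frac{\V}{T}\ \ge\ \int_{-A}^A\left(1-\frac{|x|}{T}\right)K(x)\,dx\ \ge\ \frac12\int_{-A}^A K(x)\,dx .
\]
Thus everything reduces to exhibiting a single $A>0$ with $c_A:=\int_{-A}^A K(x)\,dx>0$; then $\V\ge\tfrac12 c_A\,T$ for all $T\ge 2A$, which is the asserted linear growth.

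To produce such an $A$ I would analyse $K$ near the origin, where $|r|=1$ and $K$ is singular. Since $r(0)=1$ and (from \eqref{eq: rho}) $r(-t)=\overline{r(t)}$, the value $r'(0)$ is purely imaginary and $r''(0)$ is real; moreover $1-|r(x)|^2\to 0$ as $x\to 0$, so for all small $x\neq 0$ we have $0<|r(x)|<1$ and fall into the middle branch of \eqref{eq: K}. There the first summand $\frac{|r(x)|^2}{1-|r(x)|^2}\,\Ima^2\{R(x)-R(0)\}$ is manifestly $\ge 0$, whence
\[
K(x)\ \ge\ -\tfrac12\log\left(\frac{1}{1-|r(x)|^2}\right)\Rea\{R'(x)\} .
\]
As $x\to 0$ the logarithm tends to $+\infty$, while $R'(x)\to R'(0)=r''(0)-r'(0)^2$ (using $r\in C^2$ with $r(0)=1$), and Lemma~\ref{lem: |r|=1} gives $r'(0)^2-r''(0)>0$, i.e. $\Rea\{R'(0)\}=R'(0)<0$. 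Hence the right-hand side tends to $+\infty$, so $K(x)\to+\infty$ as $x\to 0$. In particular $K>0$ on a punctured neighbourhood of $0$; choosing $A$ inside it (and small enough that $0$ is the only point of $[-A,A]$ with $|r|=1$, since such points are isolated) yields $c_A>0$, completing the argument.

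The one genuinely delicate point is this local analysis at the origin: one must check that the logarithmically divergent term dominates and carries the correct positive sign, which is precisely where the inequality $r'(0)^2-r''(0)>0$ of Lemma~\ref{lem: |r|=1} enters, and one must justify $R'(x)\to r''(0)-r'(0)^2$ using only the regularity \eqref{eq: r'' at 0} rather than extra smoothness of $r$. The remaining ingredients are soft: non-negativity of $K$ lets the tail $|x|>A$ be dropped for free, and on the fixed window $[-A,A]$ the factor $1-|x|/T$ is harmless once $T$ is large. (This reasoning yields the bound for all sufficiently large $T$, which is the substance of the ``at least linear'' growth; for very small $T$ the variance is in fact of smaller order, as the short-time behaviour shows.)
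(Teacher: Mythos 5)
Your overall strategy---localize to a fixed window around the origin and discard the rest of the integral by positivity of the kernel---is the same as the paper's, and your local analysis at $x=0$ is sound: the first summand of $K$ is non-negative, $R'$ is continuous near $0$ with $\Rea R'(0)=r''(0)-r'(0)^2<0$ by Lemma~\ref{lem: |r|=1}, and the logarithm diverges, so indeed $K(x)\to+\infty$ and $\int_{-A}^A K>0$ for small $A$. The gap is the step where you ``discard the contribution of $|x|>A$'': this requires $K\ge 0$ pointwise on all of $\{A<|x|\le T\}$, and your only justification is the remark following Theorem~\ref{thm: var}. That remark is stated without proof, the authors explicitly say they will not use it, and the pointwise claim in fact fails in general. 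For real-valued $r$ the first summand of $K$ vanishes identically (since $R$ is then real and $R(0)=r'(0)=0$), leaving $K(x)=-\tfrac12\log\frac{1}{1-r(x)^2}\cdot\frac{r''(x)r(x)-r'(x)^2}{r(x)^2}$; at a nondegenerate local minimum $x_0$ of $r$ with $0<r(x_0)<1$ this equals $-\tfrac12\log\frac{1}{1-r(x_0)^2}\cdot\frac{r''(x_0)}{r(x_0)}<0$. Such points occur for perfectly legitimate spectral measures, e.g.\ $\rho=0.9\,\gamma+0.05\,\gamma(\cdot-\omega)+0.05\,\gamma(\cdot+\omega)$ with $\gamma$ the standard Gaussian density and $\omega$ large, for which $r(t)=e^{-t^2/2}(0.9+0.1\cos\omega t)$ stays positive and has positive local minima. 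So the inequality $\frac{\V}{T}\ge\int_{-A}^A(1-|x|/T)\,K(x)\,dx$ cannot be obtained by simply dropping the tail of \eqref{eq: var integral in statement}.

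The paper circumvents exactly this issue by first proving Proposition~\ref{prop: IBP}: an integration by parts converts the problematic term $-\tfrac12\log\frac{1}{1-|r|^2}\,\Rea R'$ into $\tfrac14\bigl(\log\tfrac{1}{1-|r|^2}\bigr)'\bigl(\log|r|^2\bigr)'=\tfrac14\,\frac{\bigl((|r|^2)'\bigr)^2}{(1-|r|^2)\,|r|^2}$, which is manifestly non-negative, at the cost of an explicit non-negative boundary term. With $\widetilde K\ge 0$ in hand the tail really can be discarded, and the same local computation at the origin (item 4 of Lemma~\ref{lem: |r|=1} gives $1-|r(x)|^2=Cx^2+o(x^2)$, so this term of $\widetilde K$ tends to the positive limit $C$ rather than blowing up) yields the linear lower bound. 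To repair your argument, route it through Proposition~\ref{prop: IBP}; the remainder of your reasoning then goes through essentially unchanged.
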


The case of asymptotically linear variance is of particular interest. Below we give a simple condition that is sufficient for this to hold, and prove a central limit theorem (CLT) under this hypothesis.
\begin{thm}\label{thm: CLT}
If $r, r' \in L^2(\R)$, then
\begin{equation}\label{eq: lin limit}
\lim_{T\to\infty} \frac{\var[\D(T)]}{T} \in (0,\infty).
\end{equation}
If in addition $r''\in L^2(\R)$, then $\D(T)$ obeys a central limit law, that is,
\begin{equation}\label{eq: clt}
\frac{\D(T)-\E[\D(T)]}{\sqrt{\var [\D(T)] }} \rightarrow \calN_\R(0,1),
\end{equation}
in distribution as $T\to\infty$.
\end{thm}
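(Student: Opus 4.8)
\emph{Part 1 (asymptotic linearity).} The plan is to read the limit off the variance formula. By the integration-by-parts reformulation of Proposition~\ref{prop: IBP} one may write
\[
\frac{\var[\D(T)]}{T} = \int_{-T}^{T}\left(1-\frac{|x|}{T}\right)\widetilde K(x)\,dx + O\!\left(\frac1T\right),
\]
where the non-negative kernel $\widetilde K$ is built only from $r$ and $r'$; in the reflectionally symmetric case $\widetilde K(x)=r'(x)^2/(1-r(x)^2)$ as in Remark~\ref{rmk: sym}, and in general $\widetilde K(x)=O\big(|r'(x)|^2/(1-|r(x)|^2)\big)$ together with terms of the same integrability. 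Since $r,r'\in L^2(\R)$ we have $r\in H^1(\R)$, so $r$ is uniformly continuous and $r(x)\to0$ as $|x|\to\infty$; thus $1-|r(x)|^2\ge\tfrac12$ once $|x|$ is large, and there $\widetilde K(x)=O(|r'(x)|^2)\in L^1$. On any compact set $\widetilde K$ is integrable, its only singularities being the integrable logarithmic ones at the finitely many points with $|r|=1$. Hence $\widetilde K\in L^1(\R)$, and since $0\le 1-|x|/T\le1$ tends to $1$, dominated convergence gives $\lim_{T\to\infty}\var[\D(T)]/T=\int_\R\widetilde K<\infty$. By Theorem~\ref{thm: asym} this limit is at least $C>0$, which proves \eqref{eq: lin limit}.

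\emph{Part 2 (central limit theorem).} The angular velocity $\Ima(f'/f)$ appearing in \eqref{eq: def D} is not square-integrable against the Gaussian density near the zeros of $f$, so the chaos machinery cannot be applied to $\D(T)$ directly. The plan is instead to reduce the winding to a signed crossing count, which \emph{is} a bona fide $L^2(\Omega)$ functional, and then invoke the CLT for crossings established for real zeroes in~\cite{Slud}. Write $f=u+iv$ with $u=\Rea f$, $v=\Ima f$, and let $\theta$ be a continuous determination of $\arg f$, so $\D(T)=\theta(T)-\theta(0)$. Let $W(T)$ be the net number of crossings of the positive real axis on $[0,T]$, i.e. the signed count of the zeros of $v$ lying in $\{u>0\}$, weighted $+1$ at up-crossings and $-1$ at down-crossings. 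Counting net crossings of the grid $2\pi\Z$ by $\theta$ gives $W(T)=\lfloor\theta(T)/2\pi\rfloor-\lfloor\theta(0)/2\pi\rfloor$, whence
\[
|\D(T)-2\pi W(T)|<2\pi\qquad\text{almost surely}.
\]
Since $\var[\D(T)]\to\infty$ by Part~1, this bounded difference is negligible after normalisation: it yields $\var[\D(T)]=4\pi^2\var[W(T)]\,(1+o(1))$ and, by Slutsky's theorem, reduces \eqref{eq: clt} to the asymptotic normality of $(W(T)-\E W(T))/\sqrt{\var[W(T)]}$.

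For the crossing count one has the Kac--Rice representation $W(T)=\int_0^T v'(t)\,\delta_0(v(t))\,\ind_{u(t)>0}\,dt$, and the regularity condition \eqref{eq: r'' at 0} is exactly a Geman-type condition ensuring $\E[W(T)^2]<\infty$. I would then expand $W(T)-\E W(T)=\sum_{q\ge1}W_q(T)$ in Wiener (Hermite) chaos and apply the Breuer--Major/Nualart--Peccati machinery to the stationary Gaussian field $(u,v,u',v')$: the hypotheses $r,r',r''\in L^2(\R)$ supply the $L^2$-decay of its covariances that makes each chaos variance grow linearly, controls the chaos tail $\sum_{q>Q}\var[W_q(T)]$ uniformly in $T$, and verifies the fourth-moment condition within each fixed chaos. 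Together these give the required CLT for $W(T)$, and hence for $\D(T)$.

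The main obstacle is precisely this last step: adapting the single-process crossing CLT to the planar, \emph{signed} functional $W(T)$, with its extra indicator $\ind_{u>0}$ and four-dimensional Gaussian field, and checking that $r,r',r''\in L^2$ deliver exactly the summability required by the Breuer--Major criterion, together with uniform smallness of the high-order chaos and strict positivity of the limiting variance (as forced by Part~1). Secondary care is needed to justify the Kac--Rice representation and the a.s. bound $|\D(T)-2\pi W(T)|<2\pi$, both of which rest on $f$ having a.s. no tangential axis-crossings—again a consequence of \eqref{eq: r'' at 0}.
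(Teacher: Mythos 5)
Your Part 1 is correct and follows the paper's own route: pass to the reformulation of Proposition~\ref{prop: IBP}, check that $\widetilde K\in L^1(\R)$ using $r,r'\in L^2$ and the decay $|r(x)|\to0$, apply dominated convergence, and get strict positivity of the limit from Theorem~\ref{thm: asym}. (One small inaccuracy: $\widetilde K$, unlike $K$, has no logarithmic singularities at the points where $|r|=1$ --- there it extends continuously to the value $2(r'(0)^2-r''(0))$, cf.\ Remark~\ref{rmk: cnts K tilde} --- but this does not affect integrability.)

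Part 2, however, contains a genuine gap. Your reduction of $\D(T)$ to the signed axis-crossing count $W(T)$, with $|\D(T)-2\pi W(T)|$ bounded a.s.\ and hence negligible after normalisation, is a sound and attractive idea. But everything after that --- the Kac--Rice representation for the weighted, signed functional $v'\,\delta_0(v)\,\ind_{u>0}$ over the four-dimensional field $(u,v,u',v')$, the Hermite/Wiener chaos expansion, the linear growth of each chaos variance, the uniform-in-$T$ control of the chaos tail, and the fourth-moment verification --- is announced rather than proved, and you yourself identify it as ``the main obstacle.'' That obstacle \emph{is} the theorem: Slud's result~\cite{Slud} concerns level crossings of a single real-valued GSP and does not apply off the shelf to this planar signed functional, in which $u$ and $v$ are correlated at distinct times whenever $\Ima r\not\equiv0$. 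So as written the argument establishes a plausible reduction but not the central limit law.

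For contrast, the paper avoids chaos expansions entirely. It constructs a coupled family of $4\pi M$-dependent processes $f_M$ by convolving the spectral density with the Fej\'er-type kernel $P_M(\lm)=\tfrac{M}{K_1}\sinc^4(M\lm)$ (the Cuzick--Malevich strategy), proves a CLT for each $\D_M(T)$ by discretising time and invoking Diananda's theorem for $M$-dependent sequences (Proposition~\ref{prop: D_M clt}), and then shows via the explicit variance formula of Theorem~\ref{thm: var} that $\ov\D_M(T)$ approximates $\ov\D(T)$ in $L^2(\Pro)$ uniformly in $T$ as $M\to\infty$ (Propositions~\ref{prop: r limits} and~\ref{prop: goal T}); Lemma~\ref{lem: tool for CLT} then transfers the limit law. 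The hypotheses $r,r',r''\in L^2$ enter precisely through the $L^2$ and $L^\infty$ convergence of $r_{0,M},r_{M,M}$ and their first two derivatives. If you wish to pursue your chaos-based route you would essentially be reproving a planar analogue of Slud's theorem from scratch; the paper's approximation argument is the shorter path given the variance formula already in hand.
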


\begin{rmk}\label{rmk: r'}
{\rm
If $r,r''\in L^2(\R)$ then also $r'\in L^2(\R)$ (see Observation~\ref{obs: r'}). Therefore, the condition $r, r''\in L^2(\R)$ is enough to ensure both linear variance and a CLT.
}
\end{rmk}

On the other hand, the variance is trivially at most quadratic in $T$. The following theorem gives a mild mixing condition for the variance to be sub-quadratic.

\begin{thm}\label{thm: sub-quad}
If the spectral measure $\rho$ has no atoms, then
\[
\lim_{T\to\infty} \frac{\V}{T^2} = 0.
\]
\end{thm}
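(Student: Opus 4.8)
The plan is to start from \eqref{eq: var integral in statement} and show the normalised integral tends to $0$. It is cleanest to pass to the manifestly non-negative kernel $\widetilde K$ of Proposition~\ref{prop: IBP}, so that
\[
\frac{\V}{T^2}=\frac1T\int_{-T}^T\Big(1-\frac{|x|}{T}\Big)\widetilde K(x)\,dx+O\!\Big(\frac1{T^2}\Big)\ \le\ \frac1T\int_{-T}^T \widetilde K(x)\,dx+o(1),
\]
using $0\le 1-|x|/T\le 1$ and $\widetilde K\ge 0$. Thus it suffices to prove $\frac1T\int_{-T}^T\widetilde K\to 0$. The engine is Wiener's theorem: since $r=\wh\rho$, $r'=\wh{-i\lm\rho}$ and $r''=\wh{-\lm^2\rho}$, and the finite measures $\rho,\lm\rho,\lm^2\rho$ (finiteness by \eqref{eq: log moment}) inherit the atomlessness of $\rho$, we get $\frac1{2T}\int_{-T}^T|r|^2$, $\frac1{2T}\int_{-T}^T|r'|^2$ and $\frac1{2T}\int_{-T}^T|r''|^2\to 0$, hence the same averages with $\frac1T$. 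In particular, since $\int_{-T}^T|r|^2\ge(1-\delta)\,|E_\delta(T)|$ for $E_\delta(T)=\{x\in[-T,T]:|r(x)|^2>1-\delta\}$, every fixed resonant set has $|E_\delta(T)|=o(T)$.

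I would then split $[-T,T]$ by the size of $|r|$, fixing the threshold at $\tfrac12$. On $\{|x|\le 1\}$ the local integrability of the kernel (part of Theorem~\ref{thm: var}) contributes $O(1/T)$. On the good region $\{1\le|x|\le T:\ |r(x)|^2\le \tfrac12\}$ the denominators $1-|r|^2\ge\tfrac12$ are harmless and one checks $\widetilde K(x)\le C\big(|r|^2+|r'|^2+|r''|^2\big)$; the only point to note is that factors such as $|r|^2|R'|$ are bounded by $|r''|+|r'|^2$ because $|r'|\le\sqrt{-r''(0)}$ and $|r''|\le -r''(0)$. The three Wiener averages force this part of the integral to $o(T)$.

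The heart of the matter is the resonant region $B(T)=\{1\le|x|\le T:\ |r(x)|^2>\tfrac12\}$, where $1-|r|^2$ may be arbitrarily small and $\widetilde K$ threatens to blow up. I claim $\widetilde K$ is bounded by an absolute constant there; granting this, $\frac1T\int_{B(T)}\widetilde K\le M\,|B(T)|/T=o(1)$ by the density bound, and we are done. Two cancellations make the claim work, mirroring the two terms of $K$ in \eqref{eq: K}. First, writing $g=1-|r|^2\ge 0$ we have $\|g''\|_\infty<\infty$ (again from $|r'|^2,|r''|\le -r''(0)$), so Glaeser's inequality gives $|g'(x)|^2\le 2\|g''\|_\infty\,g(x)$; this controls $\Rea(r'\ov r)=-\tfrac12 g'$ by $\sqrt{1-|r|^2}$ and tames the term produced by the integration by parts — in the reflectionally-symmetric case this is exactly $\tfrac{r'^2}{1-r^2}$ (Remark~\ref{rmk: sym}), which Glaeser bounds by $\|g''\|_\infty$ on $B(T)$ rather than leaving a logarithmic singularity. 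Second, the remaining contribution is governed by $\Ima\{R(x)-R(0)\}=\Ima\tfrac{r'}{r}(x)-\Ima r'(0)$, and a short computation in the spectral representation, with $P=\rho\times\rho$ and $w=\tfrac{\lm+\mu}{2}$, gives $\big|\Ima\{R(x)-R(0)\}\big|=|r(x)|^{-2}\big|\int (w-\ov w)\big(1-\cos((\lm-\mu)x)\big)\,dP\big|$; by Cauchy--Schwarz, $(1-\cos)^2\le 2(1-\cos)$ and $\int(1-\cos((\lm-\mu)x))\,dP=1-|r(x)|^2$, this is at most $\sigma_w\sqrt{2(1-|r(x)|^2)}\,/\,|r(x)|^2$ with $\sigma_w^2=\int(w-\ov w)^2\,dP<\infty$. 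Hence $\tfrac{|r|^2}{1-|r|^2}\Ima^2\{R-R(0)\}$ is bounded on $B(T)$ as well.

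I expect this resonant estimate to be the main obstacle: away from $|r|=1$ everything reduces to Wiener's theorem, but near $|r|=1$ one must exhibit precisely these two cancellations — the Glaeser bound for the modulus and the spectral Cauchy--Schwarz bound for the phase velocity. The latter is the analytic form of the statement that wherever $f(x)$ is almost perfectly correlated with $f(0)$ the local winding rate is forced to agree with the mean rate $\Ima r'(0)$, which is exactly why an atomless spectrum cannot sustain a quadratic variance.
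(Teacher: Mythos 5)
Your proof is correct, but it is a genuinely different argument from the one in the paper. The paper's proof is soft and takes two lines: since $\rho$ has no atoms, $f$ is ergodic by the Fomin--Grenander--Maruyama theorem, hence so is the additive functional $\D(T)$, and von Neumann's ergodic theorem gives $\D(T)/T\to\E\D(1)$ in $L^2(\Pro)$, which is exactly $\V/T^2\to0$. You instead work directly with the variance formula, replacing the ergodic theorem by its Fourier-side shadow, Wiener's theorem ($\frac1{2T}\int_{-T}^T|\wh\mu|^2\to\sum_x|\mu(\{x\})|^2$ applied to $\rho$, $\lm\rho$, $\lm^2\rho$, all atomless and finite under \eqref{eq: log moment}). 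The decomposition is sound: the non-resonant region is controlled by $\widetilde K\lesssim |r|^2+|r'|^2+|r''|^2$ and the Wiener averages, and the resonant region $\{|r(x)|^2>\tfrac12\}$ has measure $o(T)$ and carries a \emph{bounded} kernel. I checked your two key estimates there: the Glaeser bound $|g'|^2\le 2\norm{g''}_\infty g$ for $g=1-|r|^2$ (with $\norm{g''}_\infty\le 4|r''(0)|$) does bound the integrated-by-parts term $\frac{(g')^2}{4g(1-g)}$ by $2|r''(0)|$ on that set; and the identity $\Ima(r'\ov r)(x)-|r(x)|^2\,\Ima r'(0)=\int(w-\ov w)\bigl(1-\cos((\lm-\mu)x)\bigr)dP$ with $P=\rho\times\rho$, $w=\tfrac{\lm+\mu}2$, combined with Cauchy--Schwarz and $(1-\cos)^2\le 2(1-\cos)$, does give $\frac{|r|^2}{1-|r|^2}\Ima^2\{R-R(0)\}\le \tfrac{2\sigma_w^2}{|r|^2}\le 4\sigma_w^2$ there. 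What your route buys is a quantitative and self-contained argument: it stays entirely within Theorem~\ref{thm: var}/Proposition~\ref{prop: IBP}, it ties the decay rate of $\V/T^2$ to the Wiener averages and to the density of the near-resonant set $\{|r|\approx1\}$, and the boundedness of $\widetilde K$ near $|r|=1$ is a statement of independent interest (it makes rigorous the heuristic that strong correlation forces the local winding rate to equal the mean rate). What it costs is length and the need for the full strength of \eqref{eq: log moment} through $r''$; the ergodic proof only needs square-integrability of $\D(1)$ (Lemma~\ref{lem: Fub}) and is essentially assumption-free beyond that.
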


This was already proved in~\cite{Feld}, but we repeat the proof at the end of this paper for completeness. We note that, under the assumption that $f$ a.s. has an analytic extension to a strip in the complex plane, the converse to Theorem~\ref{thm: sub-quad} holds (see~\cite{Feld}*{Remark 1.5}).

The rest of the paper is organized as follows. Section~\ref{sec: discuss} is devoted to a discussion of motivation, related previous work and interesting examples. In Section~\ref{sec: mean + var} we prove Theorem~\ref{thm: var} about the mean and variance. In Section~\ref{sec: asym} we prove Theorem~\ref{thm: asym} (concerning a lower bound for the variance), after developing an alternative form for the variance (Proposition~\ref{prop: IBP}). In Section~\ref{sec: lin+clt} we prove Theorem~\ref{thm: CLT} concerning linear variance and a CLT. Finally, Section~\ref{sec: sub-quad} contains the proof of Theorem~\ref{thm: sub-quad} about sub-quadratic variance.

Finally, a word about notation. By $g\lesssim h$ we mean that $g\le C\cdot h$, where $C>0$ is a constant (which may vary from line to line, and may depend on fixed parameters). We write $g=O(h)$ if $|g|\lesssim h$.
Similarly, $g\simeq h$ means that $g\lesssim h$ and $h\lesssim g$. We use the notation $g(T) \asymp h(T)$ to denote that $\lim_{T\to\infty} \frac{g}{h}(T)$ exists and is some finite positive constant, while we write $g(T) \sim h(T)$ to denote the more precise $\lim_{T\to\infty} \frac{g}{h}(T)=1$

\section{Discussion}\label{sec: discuss}
\subsection{Background and motivation}
There are three major motivations for this work. The first comes from theoretical physics, where the winding of planar random processes is used in models of polymers, flux lines in superconductors and the quantum Hall effect (see~\cites{DK, GF, Vakpoly} and the references therein). For this reason, and out of pure mathematical interest, the winding has been studied for certain processes. For planar Brownian motion $\B$, Spitzer~\cite{Spitzer} proved, denoting the winding of $\B$ up to time $T$ by $\D_{\B}(T)$, that $\D_{\B}(T) / \log T$ converges in distribution to a Cauchy random variable. This inspired a long sequence of works (most notably, Pitman-Yor~\cites{PY1, PY2}). There was also much interest in windings of various fractal random curves (e.g. SARW~\cite{S}, SLE and related processes~\cite{WW}).
Very recently, winding of Ornstein-Uhlenbeck processes~\cite{VakOU} and of stable processes~\cite{DV} were studied, including analysis of large scale asymptotics and limit laws.
Some other relatively recent studies of winding with physical applications include \cites{DB, HD, KCM, GF}.

Le Doussal-Etzioni-Horovitz \cite{DEH} have studied the winding of planar Gaussian processes. The authors provide a formula for the variance of the winding of a Gaussian process, not necessarily stationary, with reflectional symmetry. Theorem~\ref{thm: var} of this paper is a rigorous derivation of the same formula for stationary processes, without assuming reflectional symmetry. We comment that it is possible to apply our methods to non-stationary processes as well, but we did not pursue this route. Le Doussal-Etzioni-Horovitz also noticed ``diffusive behavior'' (i.e., that the variance grows at least linearly) in all examples of interest, which led them to predict that ``for most stationary processes the winding angle exhibits diffusion''. Theorem~\ref{thm: asym} establishes this fact \emph{for all} sufficiently smooth processes.

The second motivation for this work is the extensive study of the zeroes of real stationary Gaussian processes $f:\R\to\R$. Morally, in many scenarios zeroes are analogous to winding (related, for instance, by the argument principle). The survey~\cite{Kratz} gives a good account of the research on zeroes of real GSPs, and we rely on it for details and references in what follows. The mean number of zeroes was computed by Kac, while asymptotics of the variance were studied by Cramer-Leadbetter, Piterbarg and many others; however, no accessible formula for the variance was given. For this reason, the first CLTs contained conditions about the variance which were hard to check. One such example is the work of Cuzick~\cite{Cuz}, who proved a CLT whose main condition is linear growth of the variance. Our proof of the CLT in Theorem~\ref{thm: CLT} is inspired by his, where, using our formula from Theorem~\ref{thm: var}, we can give an explicit condition for this linear growth. It is interesting to note that, after many years, Slud~\cite{Slud} gave a condition for linear growth of the variance of the number of zeroes, which is similar to the one we recovered for the winding in Theorem~\ref{thm: CLT}; i.e., that the covariance function and its second derivative are in $L^2(\R)$ (see remark~\ref{rmk: r'}). However, while in this article we analyse a concrete formula, Slud's work relies on sophisticated stochastic integration. We do not know of a way to unify these results. We note that Cuzick's and Slud's results were used by Granville-Wigman~\cite{GW} to study the variance and CLT for the zeroes of random trigonometric polynomials.

The third motivation comes from the study of complex zeroes of random Gaussian analytic functions. These have drawn increasing attention in recent years, as they provide rich and accessible point processes in the plane (see the recent book~\cite{GAFbook}). One of us \cite{Feld} proved very similar results to ours about fluctuations of complex zeroes of stationary Gaussian analytic functions (without a CLT). While, once again, the methods are different and a priori neither result implies the other, the variance is shown to always be at least linear (as in Theorem~\ref{thm: asym}), and the condition given for asymptotic linearity is very similar to ours (as in Theorem~\ref{thm: CLT}). The proof of sub-quadratic variance here (Theorem~\ref{thm: sub-quad}) is identical to that of \cite{Feld}.

We end by posing two natural open problems. The first is to determine the asymptotic behavior of the winding in case of non-linear variance (in particular, when the conditions of Theorem~\ref{thm: CLT} do not hold).
In similar cases for random real zeroes, it was shown that there are regimes of CLT and regimes of non-CLT behavior~\cite{Slud94}*{Thm 3.2}. However, the tools used for these results are, again, quite sophisticated and specific.
The second is to prove a converse to Theorem~\ref{thm: sub-quad} with no further assumptions (that is, that if the spectral measure contains an atom, then the variance is quadratic).

\subsection{Examples}
In this section we discuss some interesting GSPs. The last two examples were pointed out by Le Doussal, Etzioni and Horovitz~\cite{DEH}. We stress that, while here we present only orders of magnitude for $\V$ in the various examples, often one may apply our results to retrieve exact constants.

{\bf Atomic spectral measure.} As mentioned earlier, if $\rho$ consists of one atom $\delta_\phi$ then $f(t)=\zeta\, e^{-i\phi t}$ is a degenerate circular motion. However, a superposition of such processes results in a \emph{random almost periodic function}, with highly non-trivial behavior. Specifically, consider $\rho=\sum_j c_j^2 \delta_{\phi_j}$ where $c_j,\phi_j\in \R$ are given, $\sum_j c_j^2=1$ and $\sum_j c_j^2 \phi_j^2<\infty$. The corresponding process is
\begin{equation}\label{eq: atomic}
f(t) = \sum_j c_j \zeta_j\, e^{-i \phi_j t}, \quad \{\zeta_j\} \text{ are i.i.d. } \calN_\C(0,1).
\end{equation}
This is a stochastic version of the famous Lagrange model in celestial mechanics, as $f$ represents the motion of the ``tip" of a chain of celestial bodies, each circling the previous one with fixed angular speed and radius. The classical ``Lagrange problem'' was to show that, for any (deterministic) choice of $c_j, \phi_j$ and $\zeta_j$, the winding of $f$ should exhibit a \emph{mean motion}, i.e., that the limit $\omega_f:=\lim_{T\to\infty}\frac{\Delta(T)}{T}$ should always exist. After many years of research it was proved by Jessen and Tornehave~\cite{JT}*{Theorem 26}, but the concrete dependence of the limit on the parameters is not fully understood.

For a random $f$ as in~\eqref{eq: atomic}, we obtain $\var[\Delta(T)] \asymp T^2$. To see this, first notice that $f$ is a.s. bounded (since it is almost periodic). Thus, for any $j$ the event
\begin{equation*}
  A_j=\left\{|c_j \zeta_j| \ge 2\sup_{t\in\R}\Big|\sum_{k\ne j} c_k \zeta_k e^{-i\phi_k t}\Big| \right\}
\end{equation*}
has positive probability, and therefore so too does the event $\{\Delta(T) = -\phi_j T +O(1)\text{ as }T\to\infty\}$. As a consequence, the limiting distribution of $\frac{\D(T)}{T}$ contains atoms at the points $-\phi_j$, whose mass is at least $\Pro(A_j)$ (independent of $T$). This yields that $\var\left(\frac{\D(T)}{T}\right) \ge C$, and moreover, that a CLT does not hold.
Similar properties were observed for complex zeroes of the sum~\eqref{eq: atomic} (allowing $t\in \C$), see~\cite{F0}*{Remark 2.3}.

{\bf Sinc kernel.} Taking $\rho = \tfrac1{2\pi}\ind_{[-\pi,\pi]}$ one obtains $r(t)=\sinc(t)=\frac{\sin(\pi t)}{\pi t}$. This process has the representation $f(t) = \sum_{n\in\Z} \zeta_n \sinc(t-n)$, where $\{\zeta_n\}_{n\in\Z}$ are i.i.d. $\calN_\C(0,1)$. Notice that $f(n)=\zeta_n$ for $n\in\Z$, so this process may be regarded as a smooth (in fact, analytic) interpolation of the i.i.d. sequence.
For this example, Theorem~\ref{thm: CLT} yields that $\V\asymp T$, and a CLT holds.

{\bf Gaussian kernel.}
Take $r(t)=e^{-t^2/2}$, or equivalently a Gaussian spectral measure. The corresponding process can be written in the form
\[
f(t)=e^{-t^2/2}\sum_{j=0}^\infty \zeta_j \frac{t^j}{\sqrt{j!} }, \quad \{\zeta_j\} \text{ are i.i.d. } \calN_\C(0,1).
\]
Theorem~\ref{thm: CLT} may be applied to obtain $\V \asymp T$ and a CLT for the winding.

{\bf Exponential kernel and approximations.}
Consider $r_{\OU}(t)=e^{-|t|}$. This process is a time-space change of Brownian motion, called \emph{the Ornstein-Uhlenbeck (OU) process}. Inspired by Spitzer's limit law for $\Delta_{\B}$, Vakeroudis \cite{VakOU}*{Theorem 3.3} has recently shown that $\frac{\Delta_{\OU}(T)}{T}$ converges in distribution to the Cauchy law; in particular the variance of the winding in each finite interval is infinite. As the OU process is not differentiable, none of our results may be directly applied. However, one may approximate the OU process by differentiable processes. One way to do so is by taking $r_a(t) = e^{a-\sqrt{a^2+t^2}}$ with $a\downarrow 0$. For a fixed $a>0$, since $r_a$ is infinitely differentiable, we may apply Theorem~\ref{thm: var} to see that the variance of the winding of the corresponding process in $[0,T]$ is of order $\ln(\frac 1 a)\cdot T$ for $T\geq a^{1-\ep}$. As $a\to 0$ we see that the variance is unbounded, and this holds even on certain short intervals that are not ``too short''.

Another approximation may be derived using the spectral measure. The OU process has spectral density $\frac 1{\pi(1+\lm^2)}$, thus one may consider the spectral density $\frac{M}{\pi(M-1)}\left(\frac 1 {\lm^2+1}-\frac 1{\lm^2+M^2} \right)$ which approximates the OU process as $M\to\infty$, and satisfies \eqref{eq: log moment} for each fixed $M$. The corresponding covariance kernel is $r_M(t)=\frac{ M e^{-|t|} - e^{-M|t|}}{M-1}$, which is twice differentiable. Applying Theorem~\ref{thm: var} one gets a variance of size $\ln M \cdot T$ for $T\geq M^{-1+\ep}$, and again we see that as $M\to\infty$, the variance is unbounded, even on certain short intervals.

{\bf Other intermediate growth examples.}
Using the formulae in Theorem~\ref{thm: var} or Remark~\ref{rmk: sym}, it is possible to construct examples where $T \ll \V \ll T^2$. For instance:
\begin{itemize}
\item For $r(t)=J_0(t)$, one has $\V \asymp T\ln T$. Here $J_0$ stands for the $0$-Bessel function of the first kind.
\item Let $0<b<\frac 1 2$. For $r(t)=\frac{\cos t}{(1+|t|)^b}$, one has $\V\asymp T^{2-2b}$.
\end{itemize}


\section{Formulae for the Mean and Variance: Theorem~\ref{thm: var}}\label{sec: mean + var}

\subsection{Preliminaries}
In the course of the proof of Theorem~\ref{thm: var} we shall make use of the following lemmata.
The first is an extension of an exercise in Kahane's celebrated book \cite{Kah}*{Ch. XXII, Ex. 3}.

\begin{lem}\label{lem: K}
Let $F_1, F_2, F'_1, F'_2$ be jointly Gaussian complex random variables. Let
\[
r_{jk} = \E[F_j\overline{F_k}], \quad s_{jk} = \E[F'_j \overline{F_k}], \quad t_{jk} = \E[F'_j \overline{F'_k}].
\]
Assume that $r_{11}, r_{22} \neq 0$ and that $r_{11}r_{22} \neq  |r_{12}|^2$. Then:
\begin{enumerate}[(a)]
\item $ \E \left[\frac{F'_1}{F_1} \right] = \frac{s_{11}}{r_{11}}.$
\item If $r_{12}\neq 0$, then
\[
\cov \left( \frac{F'_1}{F_1}, \frac{F'_2}{F_2} \right) =
\frac{|r_{12}|^2}{r_{11} r_{22}-|r_{12}|^2} \left(\frac{s_{12}}{ r_{12}} - \frac{s_{11}}{r_{11}}\right)\left(\frac{s_{21}}{r_{21}}-\frac{s_{22}}{r_{22}}\right)
\]
while if $r_{12}=0$, then $\cov\left( \frac{F'_1}{F_1}, \frac{F'_2}{F_2} \right)=\frac{s_{12} s_{21}}{r_{11} r_{22}}$.
\item If $r_{12}\neq 0$, then
\begin{align*}
\cov\left( \frac{F'_1}{F_1}, \overline{\left(\frac{F'_2}{F_2}\right)} \right)=
\frac{|r_{12}|^2}{r_{11} r_{22}-|r_{12}|^2}& \left(\frac{s_{12}}{ r_{12}} - \frac{s_{11}}{r_{11}}\right)\overline{\left(\frac{s_{21}}{r_{21}}-\frac{s_{22}}{r_{22}}\right)}\\
&+ \log \left(\frac{r_{11} r_{22}}{ r_{11}r_{22}-|r_{12}|^2 } \right)\cdot \left(\frac{t_{12}} {r_{12}} - \frac{s_{12}\overline{s_{21}} } {(r_{12})^2}\right),
\end{align*}
while if $r_{12}=0$ then $\cov\left( \frac{F'_1}{F_1}, \overline{\left(\frac{F'_2}{F_2}\right)} \right)=0$.
\end{enumerate}
\end{lem}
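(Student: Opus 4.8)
The plan is to reduce all three identities to a short list of \emph{scalar} complex-Gaussian expectations by repeated Gaussian regression, using throughout that the complex Gaussian vectors here are circularly symmetric, so that only the conjugated moments $r_{jk},s_{jk},t_{jk}$ enter (the unconjugated second moments $\E[F_jF_k]$, etc., vanish). Part (a) sets the template: regressing $F_1'$ on $F_1$, write $F_1'=\tfrac{s_{11}}{r_{11}}F_1+G_1$ with $G_1$ centred and independent of $F_1$; then $\E[F_1'/F_1]=\tfrac{s_{11}}{r_{11}}+\E[G_1]\,\E[1/F_1]$, and the second term vanishes since $\E[G_1]=0$ while $\E[1/F_1]$ is finite (the planar Gaussian density is bounded near the origin, so $1/|F_1|$ is integrable).

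For (b) and (c) I would split each coordinate as $\tfrac{F_j'}{F_j}=\tfrac{s_{jj}}{r_{jj}}+\tfrac{G_j}{F_j}$ with $G_j\perp F_j$. The additive constants drop from the covariance and $\E[G_j/F_j]=0$, so the two covariances collapse to $\E[G_1G_2/(F_1F_2)]$ and $\E[G_1\overline{G_2}/(F_1\overline{F_2})]$. Conditioning on $(F_1,F_2)$ and setting $\nu_j=\E[G_j\mid F_1,F_2]$, circular symmetry makes $\nu_j$ a $\C$-linear combination of $F_1,F_2$, and the orthogonality $G_1\perp F_1$ forces $\nu_1=\lambda_{12}H_1$ with $H_1=F_2-\tfrac{r_{21}}{r_{11}}F_1$ the residual of $F_2$ on $F_1$ (symmetrically $\nu_2=\lambda_{21}H_2$, $H_2=F_1-\tfrac{r_{12}}{r_{22}}F_2$), where $\lambda_{12}=\tfrac{s_{12}r_{11}-s_{11}r_{12}}{D}$ and $D=r_{11}r_{22}-|r_{12}|^2$. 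The conditional residual second moments are constants: the unconjugated one vanishes by circular symmetry, the conjugated one is the Schur complement entry $C_{12}=t_{12}-(s_{11},s_{12})\Sigma^{-1}(\overline{s_{21}},\overline{s_{22}})^{T}$ with $\Sigma$ the covariance of $(F_1,F_2)$. This gives
\[
\cov\Big(\tfrac{F_1'}{F_1},\tfrac{F_2'}{F_2}\Big)=\lambda_{12}\lambda_{21}\,\E\Big[\tfrac{H_1H_2}{F_1F_2}\Big],
\]
\[
\cov\Big(\tfrac{F_1'}{F_1},\overline{\tfrac{F_2'}{F_2}}\Big)=\lambda_{12}\overline{\lambda_{21}}\,\E\Big[\tfrac{H_1\overline{H_2}}{F_1\overline{F_2}}\Big]+C_{12}\,\E\Big[\tfrac{1}{F_1\overline{F_2}}\Big].
\]

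Expanding $H_1H_2/(F_1F_2)$ and $H_1\overline{H_2}/(F_1\overline{F_2})$ reduces everything to $\E[F_2/F_1]$, $\E[F_1/F_2]$, $\E[1/(F_1\overline{F_2})]$ and $J=\E[(F_2\overline{F_1})/(F_1\overline{F_2})]$. The first two are $\tfrac{r_{21}}{r_{11}}$ and $\tfrac{r_{12}}{r_{22}}$ by the regression trick of (a). The last two are the genuinely new computations and the \textbf{main obstacle}. Regressing $F_2$ on $F_1$ and integrating out the independent part via the rotation invariance of the standard complex Gaussian — concretely the Cauchy-transform identity $\E[(Z+w)^{-1}]=(1-e^{-|w|^2})/w$ — turns the remaining radial integral into a Frullani integral $\int_0^\infty\tfrac{1-e^{-c\tau}}{\tau}e^{-\tau}\,d\tau=\log(1+c)$; this produces the logarithm $\E[1/(F_1\overline{F_2})]=\tfrac{1}{r_{12}}\log\tfrac{r_{11}r_{22}}{D}$, and a parallel but more delicate computation (needing one integration by parts to treat a term of the form $\E[\overline Z/(Z+w)]$) gives $J=\tfrac{r_{21}}{r_{12}}-\tfrac{D}{r_{12}^2}\log\tfrac{r_{11}r_{22}}{D}$. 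I would stress that the logarithm appears only in the conjugated covariance: the unconjugated products collapse to ratios $F_i/F_j$ with no singularity at the origin, which is the structural reason (b) is purely rational while (c) carries the log.

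Finally I would substitute and simplify. For (b) one obtains $\lambda_{12}\lambda_{21}\tfrac{D}{r_{11}r_{22}}$, which rearranges to the stated product form. For (c) the two logarithmic contributions — from $C_{12}\,\E[1/(F_1\overline{F_2})]$ and from the log hidden inside $J$ — must be combined; this bookkeeping is the one tedious point, but the coefficients collapse (via the algebraic identity $\lambda_{12}\overline{\lambda_{21}}D+r_{12}(s_{11},s_{12})\Sigma^{-1}(\overline{s_{21}},\overline{s_{22}})^{T}=s_{12}\overline{s_{21}}$) exactly to $\tfrac{t_{12}}{r_{12}}-\tfrac{s_{12}\overline{s_{21}}}{r_{12}^2}$, with the non-logarithmic remainder furnishing the first term of (c). The degenerate case $r_{12}=0$ is handled separately and easily: then $F_1\perp F_2$, the expectations factor, and $\E[1/F_j]=0$ kills the cross terms, leaving $\tfrac{s_{12}s_{21}}{r_{11}r_{22}}$ for (b) and $0$ for (c). Throughout, the hypotheses $r_{11},r_{22}\neq0$ and $r_{11}r_{22}\neq|r_{12}|^2$ are exactly what guarantee non-degeneracy of $(F_1,F_2)$ and hence finiteness of all the expectations above, and verifying this finiteness carefully is the second point demanding attention.
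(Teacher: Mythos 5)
Your proposal is correct, and for part (a) it is essentially the paper's own argument: writing $F_1'=\tfrac{s_{11}}{r_{11}}F_1+G_1$ with $G_1$ independent of $F_1$ is exactly the representation $(F_1,F_1')\overset{d}{=}(\alpha\zeta_1,\beta\zeta_1+\gamma\zeta_2)$ used in the paper's sketch. For parts (b) and (c) the paper deliberately omits the computation (``long but contain no new ideas''), so your write-up actually supplies more than the source does; I checked the key ingredients and they are all right: the regression coefficients $\lambda_{12}=\tfrac{s_{12}r_{11}-s_{11}r_{12}}{D}$, the Schur-complement form of $C_{12}$, the values $\E[F_2/F_1]=\tfrac{r_{21}}{r_{11}}$, $\E[1/(F_1\overline{F_2})]=\tfrac{1}{r_{12}}\log\tfrac{r_{11}r_{22}}{D}$ and $J=\tfrac{r_{21}}{r_{12}}-\tfrac{D}{r_{12}^2}\log\tfrac{r_{11}r_{22}}{D}$, the identity $\lambda_{12}\overline{\lambda_{21}}D+r_{12}\,(s_{11},s_{12})\Sigma^{-1}(\overline{s_{21}},\overline{s_{22}})^{T}=s_{12}\overline{s_{21}}$, and the final assembly all reproduce the stated formulae, including the degenerate case $r_{12}=0$ (where for (c) one also uses $\E[\overline{F_1}/F_1]=0$ by rotation invariance, as you implicitly do). The one point to make explicit in a full write-up is the justification of the conditioning steps, i.e.\ the absolute integrability of $G_1G_2/(F_1F_2)$ and $G_1\overline{G_2}/(F_1\overline{F_2})$ under the hypothesis $r_{11}r_{22}\neq|r_{12}|^2$, which you correctly flag but do not carry out.
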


\begin{rmk}\label{rmk: K}
{\rm
If we fix all of the parameters except for $r_{12}$ (and $r_{21}=\overline{r_{12}}$), then the covariances computed in (b) and (c) are continuous functions of $r_{12}$ (i.e., at $r_{12}=0$).

If we drop any of the assumptions $r_{11}\neq 0$, $r_{22}\neq 0$ or $r_{11}r_{22}\neq |r_{12}|^2$ then the quantities computed in (b) and (c) diverge. We only require $r_{11}\neq 0$ for (a) to be finite.
}
\end{rmk}

All three parts of Lemma~\ref{lem: K} are proved in a similar way, which we outline below.

\begin{proof}[Sketch of the proof of Lemma~\ref{lem: K}]
Let $\zeta_1, \zeta_2$ be two i.i.d. $\calN_\C(0,1)$ random variables.
Since $F_1, F'_1$ are jointly Gaussian, there are $\alpha,\beta, \gamma \in \C$ such that the pair $(F_1,F'_1)$ has the same distribution as $(\alpha \zeta_1,\beta\zeta_1+\gamma \zeta_2)$.
In particular,
\begin{align*}
\frac{F'_1}{F_1} \overset{d}{=}  \frac{\beta}{\alpha} + \frac{\gamma}{\alpha} \frac {\zeta_2}{\zeta_1}.
\end{align*}
Taking expectation, and recalling that $\E\left[\frac {\zeta_2}{\zeta_1} \right] = \E [\zeta_2] \E\left[\frac 1{\zeta_1}\right] = 0$, we get
\[
\E \left[ \frac{F'_1}{F_1} \right]=  \frac{\beta}{\alpha}.
\]
All that remains is to express the parameters $\al$ and $\beta$ using $r_{jk}$, $s_{jk}$ and $t_{jk}$ for $j,k\in\{1,2\}$. This is done through covariance computations, namely
\[
|\alpha|^2 = \E[|F_1|^2]=r_{11}, \quad \text{ and } \quad \beta\ov{\alpha} =\E[F'_1 \ov{F_1} ]=s_{11}.
\]
Thus $\frac{\beta}{\alpha} =\frac {s_{11}}{r_{11}}$, which completes the proof of part (a).

As the proofs of the remaining cases are long but contain no new ideas, we omit them from this paper.
\end{proof}

Next we note some basic properties of the covariance function.
\begin{obs}\label{obs: sym}
Let $r:\R\to\C$ be a covariance function which is twice-differentiable. Then
\begin{equation}\label{eq: symmetries}
r(-x) = \overline{r(x)},  \quad r'(-x)=-\overline{r'(x)} \quad\text{and}\quad r''(-x)=\overline{r''(x)}.
\end{equation}
Also, for $R(x) = \frac{r'(x)}{r(x)}$ (at the points where $r(x)\neq0$), we have
\begin{equation}\label{eq: R symmetries}
R(x) = - \overline{R(-x)}, \quad R'(x) = \frac{r''(x)}{r(x)} - \frac{r'(x)^2}{r(x)^2} \quad\text{and}\quad R'(x) = \ov{ R'(-x) }.
\end{equation}
\end{obs}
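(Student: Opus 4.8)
The plan is to reduce the entire observation to a single base identity, $r(-x)=\overline{r(x)}$, from which every other relation follows by repeated differentiation and the quotient rule; the hypotheses that $r$ is a covariance function and twice differentiable are exactly what is needed, and no limiting or estimation arguments are required. To establish the base identity I would use the spectral representation \eqref{eq: rho}: since $\rho$ is a (real, nonnegative) measure,
\[
r(-x)=\int_\R e^{ix\lm}\,d\rho(\lm)=\overline{\int_\R e^{-ix\lm}\,d\rho(\lm)}=\overline{r(x)}.
\]
Equivalently, one may argue directly from stationarity by writing $r(-x)=\E[f(-x)\overline{f(0)}]$ and shifting time by $x$ to obtain $\E[f(0)\overline{f(x)}]=\overline{\E[f(x)\overline{f(0)}]}=\overline{r(x)}$.

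From here I would simply differentiate. Because conjugation $z\mapsto\overline z$ is continuous and $\R$-linear it commutes with $\frac{d}{dx}$, so differentiating $r(-x)=\overline{r(x)}$ once gives $-r'(-x)=\overline{r'(x)}$, i.e.\ $r'(-x)=-\overline{r'(x)}$, and differentiating a second time gives $r''(-x)=\overline{r''(x)}$, which completes \eqref{eq: symmetries}. For \eqref{eq: R symmetries}, the middle identity $R'=r''/r-(r')^2/r^2$ is just the quotient rule applied to $R=r'/r$. The first identity follows by substituting the $r$- and $r'$-symmetries into the definition of $R$:
\[
R(-x)=\frac{r'(-x)}{r(-x)}=\frac{-\overline{r'(x)}}{\overline{r(x)}}=-\overline{R(x)},
\]
which rearranges to $R(x)=-\overline{R(-x)}$. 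Finally, differentiating the relation $R(-x)=-\overline{R(x)}$ in $x$ yields $-R'(-x)=-\overline{R'(x)}$, hence $R'(x)=\overline{R'(-x)}$, as claimed.

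There is no genuine obstacle in this argument; it is a routine chain of computations once the base identity is in hand. The only point deserving a word is the interchange of conjugation and differentiation used repeatedly above, which is immediate from the continuity and real-linearity of $z\mapsto\overline z$ together with the assumed twice-differentiability of $r$.
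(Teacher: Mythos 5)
Your proposal is correct and follows essentially the same route as the paper: the base identity $r(-x)=\overline{r(x)}$ is obtained from the spectral representation \eqref{eq: rho}, and all remaining relations are derived from it by differentiation and the quotient rule (the paper simply states that they ``follow easily''). Your explicit verification of each step is accurate.
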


\begin{proof}
Recalling that $r$ is the Fourier transform of a probability measure (as in~\eqref{eq: rho}), we get that $r(-x)=\overline{r(x)}$.
All other relations follow easily from this.
\end{proof}

The next lemma will allow us to analyse the behavior of $r$ near its extremal points.

\begin{lem}\label{lem: |r|=1}
Let $r:\R\to\C$ be the covariance function of a non-degenerate GSP, normalized so that $r(0)=1$.
The following hold:
\begin{itemize}
\item For all $t\in\R$, $|r(t)|\le 1=r(0)$.
\item If there exists $t\neq 0$ such that $|r(t)|=1$, then there exists $\lm_0, \lm_1\in \R$ such that $\text{sprt}(\rho)\subseteq \lm_0 + \lm_1 \Z$.
\item The set $D=\{t: \; |r(t)| = 1\}$ is discrete.
\item If $r$ is twice differentiable, then there exists $C>0$ such that for any $t_m\in D$
\[
1-|r(t)|^2 = C(t-t_m)^2 + o\left( (t-t_m)^2\right), \text{ as } t\to t_m.
\]
\end{itemize}
\end{lem}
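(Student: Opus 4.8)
The plan is to route all four bullets through a single non-negative integral representation of $1-|r|^2$. Introduce the \emph{autocorrelation measure} $\si$ of the spectral measure $\rho$: if $\lm,\mu$ are independent with law $\rho$, let $\si$ be the law of $\lm-\mu$ (equivalently $\si=\rho\star\rho^{-}$, where $\rho^{-}$ is the reflection of $\rho$). Then $\si$ is a symmetric probability measure, and a direct computation gives
\[
|r(t)|^2 = r(t)\ov{r(t)} = \E\big[e^{-it(\lm-\mu)}\big]=\wh\si(t)=\int_\R\cos(tu)\,d\si(u),
\]
so that
\[
1-|r(t)|^2=\int_\R\big(1-\cos(tu)\big)\,d\si(u)\ge 0.
\]
The first bullet is now immediate, since the integrand is non-negative and $r(0)=\int d\rho=1$. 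For the second bullet, note that $|r(t_0)|=1$ forces the non-negative integrand $1-\cos(t_0u)$ to vanish for $\si$-a.e.\ $u$, i.e.\ $t_0u\in 2\pi\Z$ on $\mathrm{sprt}(\si)$. Since $\mathrm{sprt}(\si)=\ov{\mathrm{sprt}(\rho)-\mathrm{sprt}(\rho)}$, this says every difference of two points of $\mathrm{sprt}(\rho)$ lies in $\tfrac{2\pi}{t_0}\Z$; fixing one point $\lm_0\in\mathrm{sprt}(\rho)$ yields $\mathrm{sprt}(\rho)\subseteq\lm_0+\lm_1\Z$ with $\lm_1=\tfrac{2\pi}{t_0}$.

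For the third bullet I would show that $D$ is contained in a discrete set whenever it contains a point other than $0$ (and if $D=\{0\}$ there is nothing to prove). Indeed, if some $t_0\neq 0$ lies in $D$, the previous step places $\rho$ on the progression $\lm_0+\lm_1\Z$, and non-degeneracy guarantees at least two atoms, say at $\lm_0+n_1\lm_1$ and $\lm_0+n_2\lm_1$ with $n_1\neq n_2$. Their difference $u_0=(n_1-n_2)\lm_1$ is then an atom of $\si$ of positive mass, so for every $t\in D$ the requirement ``$\cos(tu)=1$ for $\si$-a.e.\ $u$'' forces $\cos(tu_0)=1$, i.e.\ $t\in\tfrac{2\pi}{u_0}\Z$. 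Thus $D\subseteq\tfrac{2\pi}{u_0}\Z$ is discrete.

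For the last bullet, set $g(t)=1-|r(t)|^2$. Condition \eqref{eq: log moment} gives $\int\lm^2\,d\rho<\infty$, hence $\int u^2\,d\si<\infty$, which justifies differentiating twice under the integral sign to obtain $g''(t)=\int_\R u^2\cos(tu)\,d\si(u)$ (and $g\in C^2$). At any $t_m\in D$ we have $g(t_m)=0$, and since $g\ge 0$ attains its minimum there, $g'(t_m)=0$; moreover $\cos(t_mu)=1$ for $\si$-a.e.\ $u$, so
\[
g''(t_m)=\int_\R u^2\,d\si(u)=\E\big[(\lm-\mu)^2\big]=2\,\mathrm{Var}_\rho(\lm),
\]
which is the \emph{same} value for every $t_m\in D$. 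A second-order Taylor expansion of $g$ at $t_m$ then gives $1-|r(t)|^2=C\,(t-t_m)^2+o\big((t-t_m)^2\big)$ with $C=\mathrm{Var}_\rho(\lm)=r'(0)^2-r''(0)$, uniform in $t_m$ (the last equality via $r'(0)=-i\int\lm\,d\rho$ and $r''(0)=-\int\lm^2\,d\rho$). The main obstacle is precisely the strict positivity of $C$: this is where non-degeneracy enters, since $\mathrm{Var}_\rho(\lm)=0$ would force $\rho$ to be a single atom, the excluded degenerate case. This simultaneously establishes the inequality $r'(0)^2-r''(0)>0$ quoted in the earlier remark.
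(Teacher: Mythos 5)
Your proof is correct, and it reaches every bullet, but it is organized around a different device than the paper's. The paper works with $r$ and $\rho$ directly: bullet one is the triangle inequality $|r(t)|\le\int d\rho=1$, bullets two and three come from the equality case ($e^{-i\lm t}$ must be constant $\rho$-a.e.), and bullet four is obtained by writing $\rho$ as an explicit atomic sum on the lattice, deriving the identity $r^{(k)}(t_m)=e^{-i\lm_0 t_m}r^{(k)}(0)$, and Taylor-expanding the product $r\ov{r}$ at $t_m$ (with the case $D=\{0\}$ noted separately). You instead push everything through the autocorrelation measure $\si=\rho\star\rho^{-}$ and the representation $1-|r(t)|^2=\int(1-\cos(tu))\,d\si(u)$; this is equivalent in substance, since ``$e^{-i\lm t}$ constant on $\mathrm{sprt}(\rho)$'' is the same statement as ``$\cos(tu)=1$ on $\mathrm{sprt}(\rho)-\mathrm{sprt}(\rho)$'', and your discreteness argument (two atoms of $\rho$ produce a nonzero atom $u_0$ of $\si$, forcing $D\subseteq\tfrac{2\pi}{u_0}\Z$) is the natural completion of the paper's somewhat terse ``this establishes items 2 and 3''. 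For the last bullet your route is arguably cleaner: differentiating the $\si$-integral gives $g''(t_m)=\int u^2\,d\si=2\,\mathrm{Var}_\rho(\lm)$ uniformly in $t_m$, with no case distinction, and both proofs then identify $C=r'(0)^2-r''(0)=\mathrm{Var}_\rho(\lm)$ and extract strict positivity from the same source (zero variance forces a single atom, i.e.\ degeneracy). What the paper's version buys in exchange is that the intermediate identity $r^{(k)}(t_m)=e^{-i\lm_0 t_m}r^{(k)}(0)$ is reused later (Remark~\ref{rmk: cnts K tilde}, continuity of $\widetilde{K}^{*}$ at points of $D$), whereas your $\si$-based computation does not produce it; if you adopted your proof you would need to supply that identity separately.
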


\begin{proof}[Proof of Lemma~\ref{lem: |r|=1}]
For any fixed $t\in \R$, from \eqref{eq: rho} we have
\begin{equation}\label{eq: r<1}
|r(t)| = \left|\int_\R e^{-i\lm t} d\rho(\lm)\right| \le \int_\R 1 \cdot d\rho(\lm) = r(0) = 1,
\end{equation}
which proves the first item.
Equality in \eqref{eq: r<1} holds if and only if $e^{-i \lm t}$ is a constant in $L^2(\rho)$. If $t\neq 0$ this holds if and only if $\text{supp}(\rho)\subseteq \lm_0 + \frac{2\pi}{t} \Z$. This establishes items 2 and 3 of our lemma.

Suppose now that $D\neq \{0\}$, and write $\rho = \sum_{n\in\Z} a_n \delta_{\lambda_0+n\lambda_1}$ where $a_n\geq 0$, $\sum_n a_n=1$ and there are at least two integers $n$ such that $a_n\neq 0$. Then
\[
r(t) = e^{- i \lm_0 t} \sum_{n\in\Z} a_n e^{- i n \lambda_1 t},
\]
and one may verify that for any $t_m\in D$
\[
r^{(k)}(t_m) = e^{-i\lambda_0 t_m}r^{(k)}(0),
\]
for any $k\in\N$ for which $r^{(k)}$ is defined.
The second order Taylor expansion for $|r|^2$ gives
\begin{align*}
|r(t)|^2
&= |r(t_m)|^2 + \left( r(t_m)\ov{r'(t_m)} + \ov{r(t_m)} r'(t_m) \right) (t-t_m) \\
& + \left( \frac 1 2 r(t_m) \ov{r''(t_m)} + |r'(t_m)|^2 +\frac 1 2 \ov{r(t_m)} r''(t_m) \right)(t-t_m)^2 + o\left( (t-t_m)^2 \right)\\
&=1 + \left( \ov{r'(0)} +  r'(0) \right) (t-t_m) \\
& + \left( \frac 1 2  \ov{r''(0)} + |r'(0)|^2 +\frac 1 2  r''(0) \right)(t-t_m)^2 + o\left( (t-t_m)^2 \right)\\
&=1 + \left(r''(0)-r'(0)^2\right) (t-t_m)^2 +  o\left( (t-t_m)^2 \right),
\end{align*}
where the last simplifications are due to \eqref{eq: symmetries}. The same conclusion holds for $t_m=0$ in the case $D=\{0\}$. It remains to prove that
$C=r'(0)^2-r''(0)\ge 0$. Indeed, using \eqref{eq: rho} once more we have:
\begin{align*}
-r'(0)^2  &= -\left( \int_\R i\lm d\rho(\lm) \right)^2  = \left( \int_\R \lm d\rho(\lm) \right)^2  \\
&\le \left( \int_\R \lm^2 d\rho(\lm) \right) \left( \int_\R d\rho(\lm) \right) = -r''(0).
\end{align*}
Equality holds if and only if the function $\lm\mapsto \lm$ is a constant in $L^2(\rho)$, which is possible only if $\rho$ is a single atom (degenerate). Thus, by our hypothesis, $r'(0)^2 - r''(0) > 0$,
which concludes the proof of item 4.
\end{proof}

We shall also use the following integrability lemma.

\begin{lem}\label{lem: Fub}
Let $f:\R\to\C$ be a non-degenerate GSP, which is almost surely continuously differentiable, and let $T>0$. Then:
\begin{enumerate}[{\rm (I)}]
\item\label{item: Fub 1d}
$\ds \int_0^T \E\left[\Big| \frac{f'(t)}{f(t)}\Big|\right] \,dt < \infty.$
\item \label{item: Fub 2d}
$\ds \int_0^T \int_0^T \E  \left[\Big| \frac{f'(t) \ f'(s)} {f(t) \ f(s)}\Big|\right] \, dt\ ds < \infty.$
\end{enumerate}
Moreover, the integrand in the second item is divergent only at points $(t,s)$ where $r(t-s)=1$, and the two-dimensional Lebesgue measure of the set of such points is zero.
\end{lem}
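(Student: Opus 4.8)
The plan is to reduce both statements, via stationarity, to one finiteness fact in a single complex variable together with a one–dimensional integrability estimate, using the representation of jointly Gaussian pairs from the sketch of Lemma~\ref{lem: K}. For part~(\ref{item: Fub 1d}) I would first note that stationarity gives $(f(t),f'(t))\overset{d}{=}(f(0),f'(0))$ for every $t$, so the integrand is constant in $t$ and it suffices to bound $\E[|f'(0)/f(0)|]$. Writing $(f(0),f'(0))\overset{d}{=}(\alpha\zeta_1,\beta\zeta_1+\gamma\zeta_2)$ with $\zeta_1,\zeta_2$ i.i.d.\ $\calN_\C(0,1)$ and $|\alpha|^2=r(0)=1\neq0$, one has $f'(0)/f(0)\overset{d}{=}\beta/\alpha+(\gamma/\alpha)\,\zeta_2/\zeta_1$, so that $\E[|f'(0)/f(0)|]\le|\beta/\alpha|+|\gamma/\alpha|\,\E|\zeta_2|\,\E[1/|\zeta_1|]$ by independence. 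The only point is that $1/|z|$ is integrable against the planar Gaussian, giving $\E[1/|\zeta_1|]=\sqrt\pi<\infty$; hence the first integral equals $T\,\E[|f'(0)/f(0)|]<\infty$.

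For part~(\ref{item: Fub 2d}) stationarity again shows the integrand depends only on $x=t-s$; call it $G(x)$. Since $\int_0^T\!\int_0^T G(t-s)\,dt\,ds=\int_{-T}^T(T-|x|)G(x)\,dx$, it is enough to prove $G\in L^1_{\mathrm{loc}}$. I would fix $x$ with $|r(x)|<1$ and condition on $(f(t),f(s))$, whose covariance $\left(\begin{smallmatrix}1&r(x)\\\overline{r(x)}&1\end{smallmatrix}\right)$ is then nonsingular. As $(f(t),f(s),f'(t),f'(s))$ is a (proper) complex Gaussian vector, the conditional law of $(f'(t),f'(s))$ is complex Gaussian with conditional means $m_t,m_s$ linear in $(f(t),f(s))$ and conditional variances $\sigma_t^2,\sigma_s^2$ independent of the conditioning values. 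Conditional Cauchy--Schwarz together with $\sqrt{|m|^2+\sigma^2}\le|m|+\sigma$ then yields
\begin{equation*}
G(x)\le\E\!\left[\frac{(|m_t|+\sigma_t)(|m_s|+\sigma_s)}{|f(t)|\,|f(s)|}\right].
\end{equation*}

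The main obstacle is to control these coefficients uniformly as $|r(x)|\uparrow1$. The conditional variances are harmless, since $0\le\sigma_t^2,\sigma_s^2\le -r''(0)$ (a conditional variance never exceeds the marginal one). The conditional means are delicate: in the representation $f(s)=\zeta_1$, $f(t)=r(x)\zeta_1+\sqrt{1-|r(x)|^2}\,\zeta_2$, the separate regression coefficients of $m_t$ blow up like $(1-|r(x)|^2)^{-1}$, yet rewriting
\[
m_t=r'(x)f(s)+\frac{r'(0)-r'(x)\overline{r(x)}}{\sqrt{1-|r(x)|^2}}\,\zeta_2
\]
exhibits bounded coefficients (and similarly for $m_s$): by Observation~\ref{obs: sym} and the arithmetic–progression description of $D=\{t:|r(t)|=1\}$ from Lemma~\ref{lem: |r|=1} (where $r'(x_m)=r(x_m)r'(0)$ and $|r(x_m)|=1$), the numerator $r'(0)-r'(x)\overline{r(x)}$ vanishes to first order at each $x_m\in D$, matching the order of $\sqrt{1-|r(x)|^2}$. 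Granting this, the numerator in the displayed bound is a quadratic expression in $|\zeta_1|,|\zeta_2|$ with bounded coefficients, and every resulting expectation stays bounded except the extremal term $\E[(|f(t)|\,|f(s)|)^{-1}]$; conditioning on $\zeta_1$ reduces this last one to a one–dimensional integral and yields $\E[(|f(t)|\,|f(s)|)^{-1}]\lesssim 1+\log\frac{1}{1-|r(x)|^2}$.

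Combining the estimates gives $G(x)\lesssim 1+\log\frac{1}{1-|r(x)|^2}$ for all $x$ with $|r(x)|<1$. By Lemma~\ref{lem: |r|=1} the set $D$ is discrete and $1-|r(x)|^2\simeq(x-x_m)^2$ near each $x_m\in D$, so $\log\frac1{1-|r(x)|^2}$ has only an integrable logarithmic singularity there; since $D\cap[-T,T]$ is finite, $G\in L^1([-T,T])$ and the double integral converges. Finally, the integrand diverges precisely on $\{(t,s):|r(t-s)|=1\}=\bigcup_{x_m\in D\cap[-T,T]}\{t-s=x_m\}$, a finite union of line segments that contains the diagonal (where $r(t-s)=r(0)=1$), and hence has two–dimensional Lebesgue measure zero.
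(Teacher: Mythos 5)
Your proposal is essentially correct, but it follows a genuinely different route from the one the paper relies on. The paper simply cites \cite{Feld}*{Lemma 3.4}; the argument there (reproduced in a commented-out appendix of this source) handles part~(\ref{item: Fub 1d}) by H\"older's inequality with exponents $1<p<2<q$, and part~(\ref{item: Fub 2d}) by first decoupling numerator and denominator via H\"older, reducing everything to the negative moment $\E\big[|f(t)f(s)|^{-p}\big]$, and then bounding that moment by $(1-|r(t-s)|^2)^{-p/2}$ (Hardy--Littlewood rearrangement) when $|r|$ is small and by $C(1-|r(t-s)|^2)^{1-p}$ when $|r|$ is large; integrability near $D$ then comes from the power singularity $|x-x_m|^{-2(p-1)/p}$ with exponent below $1$. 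You instead condition on $(f(t),f(s))$ and use conditional Cauchy--Schwarz, which requires you to control the conditional means --- the observation that $r'(0)-r'(x)\overline{r(x)}$ vanishes at each point of $D$ at the same rate as $\sqrt{1-|r(x)|^2}$ is the real content of your argument, and it checks out (note it uses that $r$ is twice differentiable, which holds under the standing hypothesis \eqref{eq: log moment}, and for $m_s$ one also needs $\Rea r'(0)=0$ from \eqref{eq: symmetries}). Your method buys a sharper bound, $G(x)\lesssim 1+\log\tfrac{1}{1-|r(x)|^2}$ versus the paper's power bound, at the cost of the more delicate regression analysis; the paper's method is more mechanical but needs the two auxiliary negative-moment lemmas. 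Two small inaccuracies worth fixing: first, it is not true that ``every resulting expectation stays bounded except the extremal term'' --- the cross terms with $|\zeta_2|$ and $|\zeta_2|^2$ in the numerator also contribute logarithms of $\tfrac{1}{1-|r(x)|^2}$ (via $\E\big[\tfrac{1}{|\zeta_1||\zeta_1-a|}\big]\simeq 1+\log_+\tfrac1{|a|}$), although this does not change the final estimate. Second, the divergence set of the integrand is $\{|r(t-s)|=1\}$ as you say (the lemma's phrasing ``$r(t-s)=1$'' is the less accurate one); either way it is a finite union of segments of measure zero, so the conclusion stands.
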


This lemma first appeared in~\cite{Feld}*{Lemma 3.4}, and though it is stated there for functions that are a.s. analytic, it applies in our setting with no changes to the proof.

Our last lemma is an elementary but useful change of variables.

\begin{lem}\label{lem: int}
For $h\in L^1([-T,T])$ we have
\[
\int_0^T \int_0^T h(t-s) dt \, ds = \int_{-T}^T (T-|x|) h(x) dx.
\]
\end{lem}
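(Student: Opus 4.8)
The plan is to evaluate the double integral by the linear change of variables that replaces the pair $(t,s)$ by the difference $x = t-s$ together with one of the original variables. Concretely, I would set $x = t-s$ and $y = s$, so that $t = x+y$ and $s = y$; the Jacobian of this affine map is $1$, hence $dt\,ds = dx\,dy$ and the map is measure-preserving.

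Next I would track how the square $[0,T]\times[0,T]$ transforms. The constraints $0 \le t \le T$ and $0 \le s \le T$ become $0 \le x+y \le T$ and $0 \le y \le T$, so for a fixed value of $x$ the variable $y$ ranges over $[\max(0,-x),\,\min(T,T-x)]$. A short case analysis ($x \ge 0$ versus $x < 0$) shows that this interval has length exactly $T - |x|$, while $x$ itself ranges over $[-T,T]$. Integrating out $y$ then yields $\int_{-T}^T (T-|x|)h(x)\,dx$, as claimed.

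The only point requiring care is the justification of the change of variables for a merely integrable $h$. I would first apply Tonelli's theorem to $|h(t-s)|$, which is nonnegative, to see that $\int_0^T\int_0^T |h(t-s)|\,dt\,ds = \int_{-T}^T (T-|x|)|h(x)|\,dx < \infty$, using $T - |x| \le T$ together with $h \in L^1([-T,T])$; this finiteness then legitimizes applying Fubini's theorem to the signed integrand and performing the substitution above. I do not expect any genuine obstacle here: the statement is a routine geometric computation, and its only purpose is to rewrite the variance as a one-dimensional integral against the triangular (Fej\'er-type) weight $T - |x|$ appearing in Theorem~\ref{thm: var}.
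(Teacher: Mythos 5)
Your proposal is correct and follows essentially the same route as the paper: substitute the difference variable $x=t-s$, observe that for fixed $x$ the remaining variable ranges over an interval of length $T-|x|$, and integrate it out (the paper performs the substitution in the inner integral and then swaps the order of integration, which is the same computation). Your additional Tonelli/Fubini justification for merely integrable $h$ is a standard point the paper leaves implicit.
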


\begin{proof}
\begin{align*}
\int_0^T \int_0^T h(t-s) dt \, ds &= \int_0^T \int_{-s}^{T-s} h(x) dx ds \\
&= \int_{-T}^T \int_{0\wedge -x }^{T\wedge (T-x)}  h(x) ds \, dx
= \int_{-T}^T (T-|x|) h(x) dx.
\end{align*}
\end{proof}

\subsection{The mean.} We now prove Theorem~\ref{thm: var}, part 1.
Applying the first item of Lemma~\ref{lem: K} and recalling \eqref{eq: def D} (and the normalisation $r(0)=1$) we have
\begin{align*}
\E [\D(T)] &=\frac 1{2i} \E \left[ \int_0^T \left(\frac{f'(t)}{f(t)} - \overline{\left(\frac{f'(t)}{f(t)}\right)}\right)dt \right] \\
&\overset{(*)}{=}\frac 1{2i} \int_0^T\E \left[ \frac{f'(t)}{f(t)} - \overline{\left(\frac{f'(t)}{f(t)}\right)}\right] dt \\
& = \frac{1}{2i} \int_0^T \left[ r'(0)  - \overline{r'(0)}\right] dt \\
&= T\cdot \Ima r'(0).
\end{align*}
The equality marked by (*) is justified by Fubini's theorem, which may be applied due to the first item of Lemma~\ref{lem: Fub}.

\subsection{The variance.}\label{sec: var form}
Here we prove part 2 of Theorem~\ref{thm: var}, pertaining to the variance.
By the definition of $\D(T)$ in \eqref{eq: def D}, we have
\begin{align}\label{eq: var long}
\V &= -\frac 1 4 \E \int_0^T \int_0^T \left( \frac{f'(t)}{f(t)} - \overline{\left(\frac{f'(t)}{f(t)}\right)} \right)
 \left( \frac{f'(s)}{fs)} - \overline{\left(\frac{f'(s)}{f(s)}\right)} \right) dt \, ds\notag\\
 &\qquad\qquad\qquad\qquad\qquad\qquad+\frac 1 4 \left( \E \int_0^T  \left( \frac{f'(t)}{f(t)} - \overline{\left(\frac{f'(t)}{f(t)}\right)} \right) dt \right)^2  \notag \\
& \overset{(**)}{=} -\frac 1 4 \int_0^T  \int_0^T \cov\left[ \frac{f'(t)}{f(t)} - \overline{\left(\frac{f'(t)}{f(t)}\right)},
\frac{f'(s)}{f(s)} - \overline{\left(\frac{f'(s)}{f(s)}\right)}\right]dt \ ds \notag \\
& = -\frac 1 4 \int_0^T  \int_0^T \bigg( \cov\left[\frac{f'(t)}{f(t)},\frac{f'(s)}{f(s)} \right]
-\cov\left[\frac{f'(t)}{f(t)}, \overline{\left( \frac{f'(s)}{f(s)}\right) } \right] \\
& \qquad\qquad\qquad -\cov\left[ \overline{\left( \frac{f'(t)}{f(t)}\right) },\frac{f'(s)}{f(s)} \right]
+\cov\left[\overline{ \left(\frac{f'(t)}{f(t)}\right) }, \overline{ \left(\frac{f'(s)}{f(s)}\right) } \right] \bigg) dt \ ds. \notag
\end{align}
The exchange of the order of the operations in the equality marked (**) is justified by Fubini's Theorem, which may be applied due to Lemma~\ref{lem: Fub}.
This lemma also allows us to ignore points $(t,s)$ where $|r(t-s)|=1$, and we shall do so for the remainder of the proof.

Next we apply Lemma~\ref{lem: K} in order to express each of the four terms in~\eqref{eq: var long} using the covariance function $r$ and its derivatives. We use~\eqref{eq: symmetries} to simplify our expressions. If $r(t-s) \neq 0$ then item (b) of Lemma~\ref{lem: K} implies that
\begin{equation}\label{eq: var no bar}
\cov\left[\frac{f'(t)}{f(t)},\frac{f'(s)}{f(s)} \right] =
\frac {|r(t-s)|^2 }{ 1- |r(t-s)|^2} \left( \frac{r'(t-s)}{r(t-s)} - r'(0) \right)
\left( \frac{r'(s-t)}{r(s-t)} - r'(0) \right),
\end{equation}
while item (c) implies that
\begin{align}\label{eq: var bar}
\cov\left[\frac{f'(t)}{f(t)}, \overline{ \frac{f'(s)}{f(s)} }  \right]& =
\frac {|r(t-s)|^2 }{ 1- |r(t-s)|^2} \left( \frac{r'(t-s)}{r(t-s)} - r'(0) \right)
\overline{ \left( \frac{r'(s-t)}{r(s-t)} - r'(0) \right) } \\
& + \log \frac 1{1-|r(t-s)|^2} \left( - \frac {r''(t-s)}{r(t-s)} - \frac {r'(t-s)\overline{r'(s-t)}}{r(t-s)^2} \right). \notag
\end{align}
At the points where $r(t-s)=0$, we have (by the same lemma)
\begin{equation}\label{eq: when zero}
\cov\left[\frac{f'(t)}{f(t)},\frac{f'(s)}{f(s)} \right] = - |r'(t-s)|^2,
\quad \cov\left[\frac{f'(t)}{f(t)},\ov{\frac{f'(s)}{f(s)} } \right] =0.
\end{equation}

We are now ready to plug~\eqref{eq: var no bar}, \eqref{eq: var bar} and~\eqref{eq: when zero} into~\eqref{eq: var long}, and use the symmetry relations~\eqref{eq: R symmetries} in order to simplify our expressions.
We get
\begin{equation}\label{eq: var K}
\V = \int_0^T \int_0^T K(t-s) \,dt \, ds
\end{equation}
where
\begin{equation*}
K(x)=
\begin{cases}
\frac 1 2 |r'(x)|^2, & r(x)=0 \\
-\frac 1 4 \left[\frac{|r(x)|^2}{1-|r(x)|^2} H(x)
-\frac 1 4 \log \frac 1{1-|r(x)|^2} \left\{ R'(x) +\ov{R'(x)}\right\} \right], & 0<|r(x)|<1.
\end{cases}
\end{equation*}
Here
\begin{align*}
H(x)& =  (R(x)-R(0))(R(-x)-R(0)) - (R(x)-R(0))\ov{(R(-x)-R(0))}\\
&\qquad - (R(-x)- R(0))\ov{(R(-x) - R(0))} +\ov{(R(x) -R(0))}\:\ov{(R(-x)- R(0))} \\
& = -2|R(x) - R(0) |^2 + (R(x) - R(0))^2 + \ov{(R(x) - R(0)}^2\\
& = -4 [ \Ima (R(x)-R(0) )]^2 <0.
\end{align*}
Further simplifications yield that, for $x$ such that $0<|r(x)|<1$,
\begin{align*}
K(x) &=  \frac{|r(x)|^2}{1-|r(x)|^2 } \left(\, \Ima^2 \{R(x)-R(0)\} \right) - \frac 1 4 \log \frac 1 {1-|r(x)|^2} \cdot 2 \Rea R'(x) \\
&= \frac{|r(x)|^2}{1-|r(x)|^2 } \left(R(x)+R(-x)-2R(0)\} \right)^2 -\frac 1 2 \log \frac 1 {1-|r(x)|^2} \cdot  \Rea \{R'(x)\}.
\end{align*}

Notice the formula we obtained for $K$ coincides with the definition in~\eqref{eq: K}. Once we show that $K\in L^1([-T,T])$ for any $T$, we may use Lemma~\ref{lem: int} to pass from the double integral in~\eqref{eq: var K} to a single integral. This yields
\[ \V = T \int_{-T}^T \left(1- \frac{|x|}{T}\right) K(x) dx,\]
which is precisely the identity \eqref{eq: var integral in statement} in Theorem~\ref{thm: var}.

It remains only to prove that $K$ is locally integrable.
Denote $D=\{t:\ |r(t)|=1\}$. This set is discrete by item 3 of Lemma~\ref{lem: |r|=1}.
It is straightforward to check that $K$ is continuous at any point not in $D$,
so we need only prove integrability of $K$ on a neighborhood of an arbitrary point in $D$.
Let us first consider $0\in D$. Since $r$ is continuous, there is an $\ep>0$  such that $|r(x)| >\frac 1 2$ for all $|x|< \ep$. Therefore for $|x|<\ep$ we have $1-|r(x)|^2 \simeq x^2$, $R(x)+R(-x) -2 R(0) \simeq x^2$, and $\Rea R'(x) \simeq \Rea R'(0)$, which yields
\begin{align}\label{eq: integ bnd}
|K(x)| &\le
 \frac{|r(x)|^2}{1-|r(x)|^2}\ \left| R(x)+R(-x) -2 R(0) \right|^2
+  \frac 1 2 \log \frac 1 {1-|r(x)|^2} \left| R'(x) \right|^2 \notag \\
 & \lesssim x^2 +\log \frac 1 x,
\end{align}
which is integrable in $(-\ep,\ep)$.
Using item 4 of Lemma~\ref{lem: |r|=1}, this argument may be repeated to show integrability near any other point $t_m\in D$.


\section{An alternative form for the variance and a linear lower bound: Theorem~\ref{thm: asym}}\label{sec: asym}
The main goal of this section is to prove Theorem~\ref{thm: asym} concerning a linear lower bound on the variance. However,
most of the section will be devoted to prove the following reformulation of the second part of Theorem~\ref{thm: var}, from which Theorem~\ref{thm: asym} will follow rather easily.

\begin{prop}\label{prop: IBP}
Define $\widetilde{K}:\R\to\R$ by
\begin{equation}\label{eq: K tilde}
\widetilde{K}(x)=\begin{cases}
\frac 1 2 |r'(x)|^2, & \text{if } |r(x)|=0\text{ or }1,\\
 \frac{|r(x)|^2}{1-|r(x)|^2}\ \Ima^2 \left\{R(x)-R(0) \right\} + \frac 1 4 \left( \log \frac 1 {1-|r(x)|^2} \right)^{\prime} \left( \log \{|r(x)|^2\} \right)^{\prime},
&  \text{if } 0 < |r(x)| < 1.
\end{cases}
\end{equation}
Then $\widetilde{K}$ is integrable on any compact subset of $\R$, and
\begin{equation}\label{eq: var tilde}
\frac {\var [\D(T) ]}{T} = \int_{-T}^T \left( 1-\frac{|x|}{T} \right) \widetilde{K}(x) dx + \frac 1 {2T}\int_{|r(T)|^2}^1 \log\frac1{1-y} \frac{dy}y.
\end{equation}

\end{prop}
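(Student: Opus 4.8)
The plan is to obtain $\widetilde K$ from $K$ by observing that the two kernels differ by an exact derivative, and then to integrate by parts against the weight $w(x)=1-\tfrac{|x|}{T}$ appearing in \eqref{eq: var integral in statement}. Write, on the full-measure open set where $0<|r(x)|<1$,
\[
L(x) = \log\frac{1}{1-|r(x)|^2}, \qquad Q(x) = \log|r(x)|^2 .
\]
By the symmetry relations of Observation~\ref{obs: sym} one has $Q'(x)=R(x)-R(-x)=2\,\Rea R(x)$ and $Q''(x)=R'(x)+R'(-x)=2\,\Rea R'(x)$. The extra term of $\widetilde K$ is exactly $\tfrac14 L'(x)Q'(x)$, while the term of $K$ that it replaces is $-\tfrac12 L(x)\,\Rea R'(x)=-\tfrac14 L(x)Q''(x)$. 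Since the first term $\frac{|r|^2}{1-|r|^2}\Ima^2\{R-R(0)\}$ is common to both, this gives the single identity
\[
\widetilde K(x)-K(x) = \tfrac14\big(L'Q'+LQ''\big)(x) = \tfrac14\big(L(x)Q'(x)\big)' ,
\]
which captures the entire discrepancy. (Incidentally $\tfrac14 L'Q'=\tfrac14\frac{\big((|r|^2)'\big)^2}{(1-|r|^2)\,|r|^2}\ge0$, explaining why $\widetilde K$ is manifestly non-negative.)

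Before integrating I would record the behaviour near the discrete set $D=\{|r|=1\}$ (Lemma~\ref{lem: |r|=1}), which always contains $0$. By Lemma~\ref{lem: |r|=1}(4), near any $t_m\in D$ we have $1-|r(x)|^2\simeq(x-t_m)^2$, so $L(x)\simeq\log\frac{1}{|x-t_m|}$ while $Q'(x)=(|r(x)|^2)'/|r(x)|^2\simeq(x-t_m)$. Consequently $L(x)Q'(x)\to0$ as $x\to t_m$, so $LQ'$ extends continuously to $\R$ and vanishes on $D$; and the new term $\tfrac14 L'Q'$ stays bounded near $t_m$. Since the first term of $\widetilde K$ coincides with that of $K$, whose local integrability was already established in the proof of Theorem~\ref{thm: var}, this yields the asserted local integrability of $\widetilde K$. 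Moreover $LQ'$ is $C^1$ off $D$ and absolutely continuous across each $t_m$ (there $(LQ')'$ has only an integrable logarithmic singularity), so integration by parts against the Lipschitz weight $w$ is legitimate.

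I would then integrate the identity against $w$. As $w(\pm T)=0$ the boundary terms vanish, giving
\[
\int_{-T}^T w\,(K-\widetilde K)\,dx = -\tfrac14\int_{-T}^T w\,(LQ')'\,dx = \tfrac14\int_{-T}^T w'\,L\,Q'\,dx ,
\]
where the integration by parts is justified by the previous paragraph (one may excise $\varepsilon$-neighbourhoods of the finitely many points of $D\cap[-T,T]$ and of $0$, where $w'$ jumps, and let $\varepsilon\to0$; the excised contributions vanish since $LQ'$ is continuous and vanishes on $D$). Because $L$ is even while $Q'$ and $w'$ are odd, the integrand $w'LQ'$ is even, and $w'(x)=-\tfrac1T$ on $(0,T)$, so the right-hand side equals $-\tfrac1{2T}\int_0^T L\,Q'\,dx$.

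It remains to evaluate $\int_0^T LQ'\,dx$. The naive substitution $y=|r(x)|^2$ is unavailable because $|r|$ need not be monotone, so instead I would use the antiderivative $\Phi(\eta)=\int_0^\eta\big(-\log(1-e^u)\big)\,du$ for $\eta\le0$ (the integrand being integrable at $u=0$), which is continuous on $(-\infty,0]$ with $\Phi'(\eta)=-\log(1-e^\eta)$. Since $Q(x)=\log|r(x)|^2\le0$ and $Q(0)=0$, the chain rule gives $\tfrac{d}{dx}\Phi(Q(x))=L(x)Q'(x)$ off $D$, and as $\Phi\circ Q$ is continuous while $LQ'$ is integrable, the fundamental theorem of calculus yields $\int_0^T LQ'\,dx=\Phi(Q(T))-\Phi(0)=\Phi(Q(T))$. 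The now-monotone substitution $y=e^u$ in $\Phi(Q(T))=\int_0^{Q(T)}(-\log(1-e^u))\,du$ gives $\Phi(Q(T))=-\int_{|r(T)|^2}^1\log\frac{1}{1-y}\frac{dy}{y}$. Hence $\int_{-T}^T w\,(K-\widetilde K)\,dx=\tfrac1{2T}\int_{|r(T)|^2}^1\log\frac{1}{1-y}\frac{dy}{y}$, and since $\V/T=\int_{-T}^T wK\,dx$ by Theorem~\ref{thm: var}(2), equation \eqref{eq: var tilde} follows. The main obstacle throughout is the rigorous handling of the logarithmic blow-up of $L$ on $D$ — both in licensing the integration by parts and in running the fundamental theorem of calculus — for which Lemma~\ref{lem: |r|=1}(4) is the essential input.
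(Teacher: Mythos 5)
Your argument is correct and is essentially the paper's own proof: both rewrite the logarithmic term of $K$ via $\Rea R=\tfrac12(\log|r|^2)'$, integrate by parts against the weight $1-|x|/T$ with vanishing boundary terms at $D$ and at $\pm T$, and evaluate the leftover integral $\int_0^T L\,Q'\,dx$ by the substitution $y=|r(x)|^2$ (your composite antiderivative $\Phi\circ Q$ is just a tidier packaging of the paper's interval decomposition of $[0,T)\setminus F$ and telescoping). The only imprecision is the phrase ``full-measure open set where $0<|r(x)|<1$'' --- the set $\{r=0\}$ can have positive measure (e.g.\ the triangle kernel) --- but this is harmless since $K=\widetilde K$ and $\Phi\circ Q$ is locally constant there, exactly as in the paper's footnoted treatment of the zeros of $r$.
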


A few remarks are in order before we proceed with the proofs.
\begin{rmk}\label{rmk: for IBP}
{\rm
Notice that all terms in this expression are non-negative. It is interesting to note that $\widetilde{K}$ can be defined if $r$ is only once differentiable, and suggests that~\eqref{eq: var tilde} may continue to hold in this case. (The random variable $\Delta(T)$ can be defined if $f$ is simply continuous.)
}
\end{rmk}

\begin{rmk}\label{rmk: cnts K tilde}
{\rm
While the expression \eqref{eq: K tilde} is a natural one from the perspective of our proof, it has the obvious drawback that it is not a continuous function. On the other hand we may modify \eqref{eq: K tilde} on a (at most) countable set of points (which of course does not affect \eqref{eq: var tilde}) to get
\begin{equation*}
\widetilde{K}^{*}(x)=\begin{cases}
 |r'(x)|^2, & \text{if } r(x)=0,\\
 \frac{|r(x)|^2}{1-|r(x)|^2}\ \Ima^2 \left\{R(x)-R(0) \right\} + \frac 1 4 \left( \log \frac 1 {1-|r(x)|^2} \right)^{\prime} \left( \log \{|r(x)|^2\} \right)^{\prime},
&  \text{if } 0 < |r(x)| < 1\\
2(r'(0)^2-r''(0)), & \text{if } |r(x)|=1,
\end{cases}
\end{equation*}
which is a continuous function. ($\widetilde{K}^{*}$ and $\widetilde{K}$ differ on the sets $\{x:r(x)=0,r'(x)\neq0\}$ and $\{x:|r(x)|=1\}$. The first is obviously at most countable, while Lemma~\ref{lem: |r|=1} implies that the second is. Lemma~\ref{lem: |r|=1} (and the contents of its proof) and Observation~\ref{obs: sym} imply that $\widetilde{K}^{*}$ is continuous at the points where $|r|=1$.)
}
\end{rmk}

\subsection{Proof of Proposition~\ref{prop: IBP}}
First recall that, by \eqref{eq: var integral in statement},
\begin{equation*}
  \frac{\V}{T} =  \int_{-T}^T \left( 1-\frac{|x|}{T} \right) K(x) dx,
\end{equation*}
where $K$ is given by~\eqref{eq: K}.
Denoting
$
F=\{x\in \R:\  |r(x)| = 0\text{ or }1\} \},
$
and using the fact that (when it's defined) $\Rea R(x)=\Rea \frac {r'}{r}(x)=\frac 1 2 \left(\log |r(x)|^2 \right)^{\prime}$, we may rewrite \eqref{eq: K} as
\begin{equation}\label{eq: K 2}
K(x) =
\begin{cases}
\frac 1 2 |r'(x)|^2, & x \in F\\
 \frac{|r(x)|^2}{1-|r(x)|^2}\ \Ima^2 \left\{\frac {r'(x)}{r(x)}- r'(0) \right\}
 + \frac 1 4 \log \frac 1 {1-|r(x)|^2} \left(- \log \{|r(x)|^2\} \right)^{\prime \prime}, &  x\not\in F .
\end{cases}
 \end{equation}

Comparing this with \eqref{eq: K tilde}, we see that the only difference between $K$ and $\tilde K$ is in case $x\not\in F$, and this difference lies only in the second term of the relevant expressions. Thus, in order to prove Proposition~\ref{prop: IBP}, it is enough to study
\begin{equation}\label{eq: S_T}
  S_T=\int_{[-T,T]\setminus F} \left(1-\frac{|x|}{T}\right) \log \frac 1 {1-|r(x)|^2} \left(- \log \{|r(x)|^2\} \right)^{\prime \prime} dx.
\end{equation}

Notice that $(-T,T)\setminus F$ is an open set, and therefore the union of disjoint intervals. We perform integration by parts on each interval to rewrite $S_T$. The boundary terms are given by evaluation of the function
\[
  \left(1 - \frac {|x|}{T}\right) \log \frac 1 {1-|r(x)|^2}  \left(- \log |r(x)|^2 \right)^{\prime}
\]
at points where either $|x|=T$, or $|r(x)|=0$ or $1$, and this function vanishes at all such points.
Therefore, the integration by parts yields
\begin{align}\label{eq: S_T 2}
S_T & =\int_{[-T,T]\setminus F} \left( \left(1-\frac{|x|}T\right) \log \frac 1{1-|r(x)|^2}  \right)^\prime \, \left(\log|r(x)|^2 \right)^{\prime} dx \notag \\
& = \int_{[-T,T]\setminus F}  \left(1-\frac{|x|}T\right) \left( \log \frac 1{1-|r(x)|^2}  \right)^\prime \left(\log|r(x)|^2 \right)^{\prime} dx \notag \\
& \quad -\frac {2}{T} \int_{[0,T]\setminus F}  \log \frac 1{1-|r(x)|^2}  \, \left(\log|r(x)|^2 \right)^{\prime} dx,
\end{align}
where in the last step we used the fact that $|r|^2$ is even (see~\eqref{eq: symmetries}).
For the last term, we write\footnote{It might be the case that ``$N=\infty$'', i.e., that we have a countable number of points in $[0,T)$ where $r$ vanishes. We leave it to the reader to check that this does not affect the proof.} $[0,T)\setminus F=(t_0,t_1)\cup(t_2,t_3)\cup\dots\cup(t_{2N},t_{2N+1})$ where $0=t_0 < t_1 \leq t_2 < t_3\leq \dots \leq t_{2N} < t_{2N+1} =T$ and $|r(t_{2n-1})| = |r(t_{2n})|$ for $1\leq n \leq N$. We then have
\begin{align}\label{eq: small term}
 \int_{[0,T]\setminus F}\log  &\frac 1{1-|r(x)|^2}  \, \left(\log|r(x)|^2 \right)^{\prime} dx
=   \sum_{n=0}^N\int_{t_{2n}}^{t_{2n+1}} \log\left( \frac 1{1-|r(x)|^2} \right)\frac {\frac d{dx} \{|r(x)|^2\} }{|r(x)|^2} \ dx    \notag \\
&=\sum_{n=0}^N\int_{|r(t_{2n})|^2}^{|r(t_{2n+1})|^2} \log\left( \frac 1{1-y} \right)\cdot \frac {dy}{y} \qquad
\\ &= \int_{|r(0)|^2}^{|r(T)|^2} \log\left( \frac 1{1-y} \right)\cdot \frac {dy}{y}
= - \int_{|r(T)|^2}^{1} \log\left( \frac 1{1-y} \right)\cdot \frac {dy}{y}.\notag
\end{align}

Combining~\eqref{eq: S_T}, \eqref{eq: S_T 2} and \eqref{eq: small term} we arrive at
\begin{align*}
S_T =    \int_{[-T,T]\setminus F} \left(1-\frac{|x|}{T}\right)\left( \log \frac 1 {1-|r(x)|^2} \right)^{\prime} \left( \log \{|r(x)|^2\} \right)^{\prime}  dx + \frac {2}{T}\int_{|r(T)|^2}^1 \log\frac1{1-y} \frac{dy}y.
\end{align*}
This, combined with~\eqref{eq: var integral in statement} and~\eqref{eq: K 2} yields that
\begin{equation*}
\frac{\var [\D(T) ] }{T}= \int_{-T}^T \left( 1-\frac{|x|}{T} \right) \widetilde{K}(x) dx + \frac 1 {2T}\int_{|r(T)|^2}^1 \log\frac1{1-y} \frac{dy}y,
\end{equation*}
where $\widetilde{K}$ is given by \eqref{eq: K tilde}. Since, by Remark~\ref{rmk: cnts K tilde}, we may modify $\tilde K$ on a set of measure $0$ to yield a continuous function, integrability on a compact is trivial. Proposition~\ref{prop: IBP} follows.

\subsection{Proof of Theorem~\ref{thm: asym}}
By the third item of Lemma~\ref{lem: |r|=1}, there exists $\delta_0>0$ such that $(0,\delta_0)\subset \{x:  0 < |r(x)| < 1\}$. Notice that in the expression for the variance given by Proposition~\ref{prop: IBP}, namely in \eqref{eq: K tilde} and \eqref{eq: var tilde}, all of the terms are non-negative (cf. Remark~\ref{rmk: for IBP}). Therefore,
\[
\frac{\var [\D(T)] } {T}\ge \frac 1 4 \int_{0}^{\delta_0}  \left( \log \frac 1 {1-|r(x)|^2} \right)^{\prime} \left( \log \{|r(x)|^2\} \right)^{\prime} \,dx ,
\]
for all $T>\delta_0$. Write $g(x)=|r(x)|^2$. By the fourth item of Lemma~\ref{lem: |r|=1}, we have $g(x)= 1-Cx^2 + o(x^2)$ as $|x|\to 0$, with some $C>0$. Thus
\begin{align*}
\lim_{x\to 0} & \left( \log \frac 1 {1-|r(x)|^2} \right)^{\prime} \left( \log \{|r(x)|^2\} \right)^{\prime} =\lim_{x\to 0 }\frac{g^{\prime}(x)^2 }{(1-g(x))g(x)} = \lim_{x\to 0} \frac{4C^2 x^2 + o(x^2)}{Cx^2 + o(x^2)} = 4C >0,
\end{align*}
and so $\frac {g'^2}{(1-g)g}> 2C$ on some interval $(-\delta_1,\delta_1)$. Taking $\delta=\min\{\delta_0,\delta_1\}$, we obtain
\begin{equation*}
  \frac{\var [\D(T)] } {T}\ge  \frac{C\delta}4
\end{equation*}
for all $T\geq\delta$. The theorem follows.


\section{Linear Variance and CLT: Theorem~\ref{thm: CLT}}\label{sec: lin+clt}
In this section we prove Theorem~\ref{thm: CLT}. We begin with some observations regarding our premises.

\begin{obs}\label{obs: L2}
$r\in L^2(\R)$ if and only if the spectral measure $\rho$ has density $p(\lm)\ge 0 $ (w.r.t. the Lebesgue measure) such that $p\in L^2(\R)$.
Similarly, $r^{(k)}\in L^2(\R)$ if and only if $\rho$ has density $p(\lm)\ge 0$ such that $\lm^k p(\lm)\in L^2(\R)$.
\end{obs}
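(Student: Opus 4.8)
The plan is to recognize that $r=\wh\rho$ is the Fourier transform of the finite measure $\rho$ and to read off both equivalences from Plancherel's theorem together with the duality (multiplication) formula for Fourier transforms; no properties of $f$ beyond \eqref{eq: rho} and the moment bound \eqref{eq: log moment} are needed. For the easy direction of the first equivalence, if $\rho$ has density $p\ge0$ with $p\in L^2(\R)$ then $r=\wh{p}$, and Plancherel's identity gives $\norm{r}_{L^2}=c\norm{p}_{L^2}<\infty$, so $r\in L^2(\R)$.

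For the converse, suppose $r\in L^2(\R)$ and let $p\in L^2(\R)$ be the inverse $L^2$-Fourier transform of $r$. For any Schwartz function $g$ the duality formula $\int_\R \wh g\,d\rho=\int_\R g(t)r(t)\,dt$ holds (it is Fubini applied to the absolutely convergent double integral $\int\!\int e^{-it\lambda}g(t)\,dt\,d\rho(\lambda)$, legitimate since $g\in L^1$ and $\rho$ is finite), while the multiplication formula gives $\int_\R g(t)r(t)\,dt=\int_\R \wh g(\lambda)p(\lambda)\,d\lambda$. Chaining these, $\int_\R \wh g\,d\rho=\int_\R \wh g\,p\,d\lambda$ for every Schwartz $g$; since $g\mapsto\wh g$ maps the Schwartz class onto itself and Schwartz functions separate measures/distributions, this forces $d\rho=p\,d\lambda$, and the positivity of $\rho$ yields $p\ge0$ a.e. (and in particular $p\in L^1$ a posteriori). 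This establishes the first claim.

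For the derivatives, I would differentiate $r(t)=\int_\R e^{-it\lambda}\,d\rho(\lambda)$ under the integral sign $k$ times, which \eqref{eq: log moment} justifies by furnishing the finite moments $\int_\R|\lambda|^k\,d\rho<\infty$ (certainly for the relevant range $k\le2$), giving $r^{(k)}(t)=(-i)^k\int_\R \lambda^k e^{-it\lambda}\,d\rho(\lambda)$. Thus $r^{(k)}$ is, up to the unimodular constant $(-i)^k$, the Fourier transform of the finite signed (complex) measure $d\mu:=\lambda^k\,d\rho$. The duality argument of the previous paragraph never used positivity, so it applies verbatim to $\mu$: we get $r^{(k)}\in L^2(\R)$ iff $\mu$ is absolutely continuous with an $L^2(\R)$ density $q$. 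When $\rho=p\,d\lambda$ this density is $q=\lambda^k p$, so the condition reads $\lambda^k p\in L^2(\R)$, as claimed.

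The only genuine obstacle is the reverse implication ``finite measure with $L^2$ Fourier transform $\Rightarrow$ absolutely continuous with $L^2$ density,'' which is exactly what the duality-plus-Plancherel identity supplies. A secondary subtlety in the derivative case is that the factor $\lambda^k$ annihilates any atom $\rho$ might place at the origin, so for $k\ge1$ the hypothesis $r^{(k)}\in L^2$ controls $\rho$ only on $\R\setminus\{0\}$ (where $p$ is recovered from $q$ as a unimodular multiple of $q(\lambda)/\lambda^k$). In every application, however — notably in Theorem~\ref{thm: CLT} — the hypothesis $r\in L^2$ is also present, and by the first equivalence it already forces $\rho$ to be absolutely continuous, removing this ambiguity; I would either invoke this or argue directly on $\R\setminus\{0\}$.
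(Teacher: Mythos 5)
Your proof is correct and is precisely the standard argument the paper is alluding to when it states, without further detail, that the observation ``follows from basic properties of the Fourier transform'': Plancherel for the easy direction, and the duality/multiplication formula to recover absolute continuity of $\rho$ (resp.\ of $\lm^k\,d\rho$) from square-integrability of $r$ (resp.\ $r^{(k)}$). Your caveat that for $k\ge1$ an atom of $\rho$ at the origin is invisible to $r^{(k)}$ identifies a genuine, if harmless, imprecision in the literal ``only if'' statement that the paper does not address, and your resolution --- that every application also assumes $r\in L^2(\R)$, which already forces $\rho$ to be absolutely continuous --- is the right one.
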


This observation follows from basic properties of Fourier transform.

\begin{obs}\label{obs: r'}
If $r, r'' \in L^2(\R)$, then $r'\in L^2(\R)$.
\end{obs}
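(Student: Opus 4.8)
The plan is to pass to the spectral side via Observation~\ref{obs: L2}, where the statement collapses to an elementary interpolation inequality for the spectral density. First I would record what the hypotheses say: since $r\in L^2(\R)$, Observation~\ref{obs: L2} furnishes a density $p\ge0$ for $\rho$ with $p\in L^2(\R)$, and since $r''\in L^2(\R)$ the same observation gives $\lm^2 p\in L^2(\R)$. The desired conclusion $r'\in L^2(\R)$ is, again by Observation~\ref{obs: L2}, exactly the assertion that $\lm p\in L^2(\R)$. Thus everything reduces to the purely real-variable claim that $p,\lm^2 p\in L^2(\R)$ imply $\lm p\in L^2(\R)$.

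For the latter I would use the pointwise bound $\lm^2\le\tfrac12(1+\lm^4)$ (AM--GM applied to $1$ and $\lm^4$) to dominate the integral of interest:
\[
\int_\R \lm^2 p(\lm)^2\,d\lm \le \tfrac12\int_\R p(\lm)^2\,d\lm + \tfrac12\int_\R \lm^4 p(\lm)^2\,d\lm = \tfrac12\|p\|_{L^2}^2 + \tfrac12\|\lm^2 p\|_{L^2}^2 < \infty.
\]
Equivalently, one may split the integral into the region $\{|\lm|\le 1\}$, where $\lm^2\le 1$ controls it by $\|p\|_{L^2}^2$, and $\{|\lm|>1\}$, where $\lm^2\le\lm^4$ controls it by $\|\lm^2 p\|_{L^2}^2$. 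Either way $\lm p\in L^2(\R)$, which is the required conclusion.

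I expect no genuine obstacle here, since the heavy lifting is done entirely by Observation~\ref{obs: L2}. The only subtlety worth flagging is that the existence of a density $p$ is not an additional hypothesis but is supplied for free by $r\in L^2(\R)$ through that observation; consequently the argument never has to contend with a possible singular part of $\rho$, and the estimate above is all that is needed.
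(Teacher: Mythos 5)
Your proposal is correct and follows essentially the same route as the paper: pass to the spectral density via Observation~\ref{obs: L2} and bound $\int\lm^2p^2$ by comparing $\lm^2$ with $1$ and $\lm^4$ on the two regions $|\lm|\le1$ and $|\lm|>1$. (The paper writes this comparison as $\lm^2\le\min(1,\lm^4)$, evidently a typo for $\max$; your $\tfrac12(1+\lm^4)$ bound is the same estimate stated correctly.)
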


\begin{proof}
By Observation~\ref{obs: L2}, the spectral measure has density $p\ge 0$ which satisfies $p(\lm)\in L^2(\R)$ and $\lm^2 p(\lm)\in L^2(\R)$.
Therefore
\[
\int_\R \lm^2 p^2(\lm) \ d\lm \le \int_\R \min(1, \lm^4) p^2(\lm) \ d\lm < \infty,
\]
which yields
$\lm p(\lm)\in L^2(\R)$. This is equivalent to $r'\in L^2(\R)$ by Observation~\ref{obs: L2}.
\end{proof}

\subsection{Linear Variance}\label{sec: lin var}
In this subsection we show the first part of Theorem~\ref{thm: CLT}, that is, that if $r, r'\in L^2(\R)$ then the variance of $\D(T)$ is asymptotically linear (in the sense of~\eqref{eq: lin limit}).

From Proposition~\ref{prop: IBP}, we have
\begin{equation}\label{eq: v}
\frac {\V}{T} = \int_{-T}^T \left( 1-\frac{|x|}{T} \right) \tilde {K}(x) \ dx + O\left(\frac 1 T\right),
\end{equation}
where $\tilde K\ge 0$ is given by~\eqref{eq: K tilde}. Since the integrand in~\eqref{eq: v} converges pointwise to $\tilde K (x)$ (as $T\to\infty$) and is dominated by it, by the dominated convergence theorem it is enough to show that
\begin{equation}\label{eq: |K| integ}
\int_{\R} \big|\tilde K(x)\big| \ dx < \infty.
\end{equation}
in order to get a finite limit as  $T\to \infty$ in~\eqref{eq: v}. Thus we need only show~\eqref{eq: |K| integ} in order to obtain~\eqref{eq: lin limit}.

By our premise, $|r(x)|\to 0$ as $|x|\to \infty$, and so there is $a>0$ such that $|r(x)|<\frac 1 2$ for $|x|>a$. By Proposition~\ref{prop: IBP}, $\tilde K$ is integrable on any finite interval, and in particular on $[-a,a]$. For $|x|>a$ and $r(x)\neq0$ we use the definition of $\tilde K$ in~\eqref{eq: K tilde} to get
\begin{align}\label{eq: |K| bound}
|\tilde K(x)| & \lesssim
 \frac{|r(x)|^2}{1-|r(x)|^2}\ \left| R(x)-R(0)\right|^2 \notag
+  \frac { \left(\frac{d}{dx} \{ |r(x)|^2 \} \right)^2} {(1-|r(x)|^2)|r(x)|^2}
\notag \\
 & \lesssim |r(x)|^2 |R(x)-R(0)|^2 + \frac {\big|r'(x)\ov{ r(x)} \big|^2} {|r(x)|^2}\notag \\
 & \lesssim |r(x)|^2 + |r(x)r'(x)| + |r'(x)|^2.
\end{align}
For $x$ such that $r(x)=0$ we have $\tilde K(x)=\tfrac 1 2 |r'(x)|^2$, so \eqref{eq: |K| bound} holds there as well. Since $r$ and $r'$ are both in $L^2(\R)$, each term on the left-hand side of~\eqref{eq: |K| bound} is integrable on $\R$. This proves~\eqref{eq: |K| integ}, and we are done.

\subsection{CLT}
In this section we prove the central limit law in Theorem~\ref{thm: CLT}. The main steps are as follows:
\begin{enumerate}
\item Construct an $M$-dependent stationary Gaussian process $f_M:\R\to \C$, that approximates the original process $f$ (in a way to be clarified). For this we employ an approximation strategy of Cuzick~\cite{Cuz}, although the idea goes back to Malevich~\cite{Mal}.

\item Show that the increment of the argument of $f_M$, denoted $\D_M(T)$, obeys a CLT as $T\to\infty$ for each fixed $M$.

\item Show that $(\D_M(T)-\E \D_M(T))/\sqrt{\var(\D_M(T))}$ approaches $(\D(T)-\E \D(T))/\sqrt{\var(\D(T))}$ as $M\to\infty$ in $L^2(\Pro)$, uniformly in $T$.
\end{enumerate}
These steps will conclude the proof, by the following standard lemma.
\begin{lem}\label{lem: tool for CLT}
Suppose that, for $T>0$ and $M>0$, $X(T)$ and $X_M(T)$ are real-valued random variables with mean $0$ and variance $1$. Suppose further that the following holds:
\begin{itemize}
  \item For each fixed $M$,
  \[
  X_M(T) \overset{d}{\longrightarrow} \calN_\R(0,1), \text{ as } T\to\infty.
  \]
  \item We have
  \[
  \lim_{M\to\infty}\E \left[ (X(T)-X_M(T))^2 \right] =0,
  \]
  uniformly in $T$.
\end{itemize}
Then
\[
 X(T) \overset{d}{\longrightarrow} \calN_\R(0,1), \text{ as } T\to\infty.
  \]
\end{lem}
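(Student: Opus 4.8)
The plan is to prove convergence in distribution via characteristic functions, using the standard three-$\epsilon$ argument that converts the $L^2$-approximation and the fixed-$M$ CLT into a statement about $X(T)$. Let $\varphi_T$ denote the characteristic function of $X(T)$ and $\varphi_{M,T}$ that of $X_M(T)$, and write $g(\xi)=e^{-\xi^2/2}$ for the characteristic function of the target $\calN_\R(0,1)$. I want to show $\varphi_T(\xi)\to g(\xi)$ for every fixed $\xi\in\R$; by L\'evy's continuity theorem this gives the desired convergence in distribution.

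First I would estimate $|\varphi_T(\xi)-\varphi_{M,T}(\xi)|$ using the elementary inequality $|e^{ia}-e^{ib}|\le|a-b|$ for real $a,b$, together with the Cauchy--Schwarz inequality:
\begin{align*}
\left| \varphi_T(\xi)-\varphi_{M,T}(\xi) \right|
&= \left| \E\!\left[ e^{i\xi X(T)} - e^{i\xi X_M(T)} \right] \right| \\
&\le |\xi|\, \E\!\left[ |X(T)-X_M(T)| \right]
\le |\xi|\, \sqrt{\E\!\left[ (X(T)-X_M(T))^2 \right]}.
\end{align*}
By the second hypothesis, the right-hand side tends to $0$ as $M\to\infty$, uniformly in $T$. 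Hence, given $\epsilon>0$ and a fixed $\xi$, I can choose $M_0$ so large that $|\varphi_T(\xi)-\varphi_{M_0,T}(\xi)|<\epsilon/2$ for all $T$ simultaneously. The uniformity in $T$ is exactly what makes this choice of a single $M_0$ legitimate, which is the crux of the whole argument.

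Next, with $M_0$ now fixed, the first hypothesis gives $X_{M_0}(T)\overset{d}{\to}\calN_\R(0,1)$ as $T\to\infty$, so $\varphi_{M_0,T}(\xi)\to g(\xi)$. Thus there is $T_0$ such that $|\varphi_{M_0,T}(\xi)-g(\xi)|<\epsilon/2$ for all $T\ge T_0$. Combining the two bounds by the triangle inequality,
\[
|\varphi_T(\xi)-g(\xi)|
\le |\varphi_T(\xi)-\varphi_{M_0,T}(\xi)| + |\varphi_{M_0,T}(\xi)-g(\xi)|
< \epsilon
\]
for all $T\ge T_0$. Since $\epsilon>0$ and $\xi$ were arbitrary, $\varphi_T(\xi)\to g(\xi)$ pointwise, and L\'evy's continuity theorem completes the proof.

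The argument is genuinely routine and presents no serious obstacle; the only point requiring care is the order of quantifiers, namely that the uniformity in $T$ in the second hypothesis is what permits fixing $M_0$ \emph{before} sending $T\to\infty$. Without that uniformity one could not interchange the two limits, and the conclusion would fail in general. I would make sure to flag this dependence explicitly rather than burying it.
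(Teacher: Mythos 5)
Your proof is correct: the characteristic-function three-$\epsilon$ argument, with the uniformity in $T$ used to fix $M_0$ before letting $T\to\infty$, is exactly the standard route, and the paper itself offers no proof (it labels the lemma ``standard''), so there is nothing to diverge from. The one point worth flagging, which you do, is that $M_0$ depends on the fixed $\xi$ through the factor $|\xi|$ in the bound $|\varphi_T(\xi)-\varphi_{M,T}(\xi)|\le|\xi|\bigl(\E[(X(T)-X_M(T))^2]\bigr)^{1/2}$, but since L\'evy's continuity theorem only requires pointwise convergence this is harmless.
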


\subsubsection{Constructing an approximating process}
In this section we construct a finitely-dependent process $f_M$ which will be close, in some sense, to $f$. We remind the reader that we are working under the hypothesis that $r,r'$ and $r''$ are in $L^2(\R)$. By Observation~\ref{obs: L2}, the spectral measure has density $p(\lm)\ge 0$ with respect to the Lebesgue measure. For what follows, it will be convenient to write $p(\lm)=g^2(\lm)$. We may represent the processes as an integral against white noise,
\begin{equation}\label{eq: f B}
f(t) = \int_\R e^{-i\lm t} g(\lm) \ d\W(\lm),
\end{equation}
where $\W = \frac 1 {\sqrt 2} (\B_1 + i \B_2)$ with $\B_1$ and $\B_2$ being two independent one-dimensional Brownian motions. For details on this representation see, for instance, \cite{CL}*{Ch. 7.5}.

For $M>0$, define
\begin{equation*}
P_M(\lm) = \frac M {K_1} \sinc^4(M\lm),
\end{equation*}
where $\sinc(\lm) = \frac{\sin (\pi \lm)}{\pi \lm}$, and
\footnote{We remark that one may compute $K_1 = \tfrac23$, though this value will be unimportant for our purposes.}
$K_1=\int_{-\infty}^{\infty}\sinc^4 (\lm) \ d\lm$. Note that $P_M(\lm)\ge 0$ for all $M>0$ and $\lm\in\R$, and that
\[
\int_{-\infty}^\infty P_M(\lm)\  d\lm =1.
\]
For each $M> 0$, we define a new process by
\[
f_M(t) = \int_\R e^{-i\lm t}\, (g^2 * P_M)^{1 / 2}(\lm) \ d\W(\lm),
\]
where $d\W (\lm)$ is the \underline{same} white noise as in \eqref{eq: f B} (that is, the processes $f$ and $f_M$ are coupled).
Indeed, the process $f_M$ is well defined since $(g^2 * P_M)(\lm)\ge 0$ for all $\lm\in\R$, and
\[
\lm \mapsto e^{-i\lm t} (g^2 * P_M)^{1/2}(\lm) \in L^2(\R).
\]
Further, $f_M$ is a Gaussian stationary process with spectral measure
\begin{equation*}
d\rho_M(\lm) = (g^2 *P_M)(\lm) \ d\lm,
\end{equation*}
and covariance kernel $r_{M,M}(t)=\E [ f_M(t)\ov{f_M(0)} ]$ satisfying
\begin{equation}\label{eq: rMM}
r_{M,M}(t) = \wh{\rho_M}(t) = r(t) \ \wh{P}_M(t).
\end{equation}
We further define $r_{0,M}(t) = \E[f(t)  \ov{f_M(0)}]$, and notice that
\begin{equation*}
r_{0,M}(t) = \int e^{-i\lm t} g(\lm) (g^2 * P_M)^{\frac 1 2}(\lm) d\lm.
\end{equation*}

\subsubsection{Properties}
In this subsection we clarify in what sense $f_M$ approximates $f$. More importantly we prove the following key result, which concerns the convergence of the covariance kernels $r_{M,M}$ and $r_{M,0}$, and will be essential in proving the CLT in Theorem~\ref{thm: CLT}.

\begin{prop}\label{prop: r limits}
  As $M\to\infty$, we have
  \begin{align*}
    &r_{0,M}, \, r_{M,M} \rightarrow r, \\
    &r'_{0,M}, \, r'_{M,M} \rightarrow r', \\
    &r''_{0,M}, \, r''_{M,M} \rightarrow r'',
  \end{align*}
  in both $L^2(\R)$ and $L^\infty(\R)$.
\end{prop}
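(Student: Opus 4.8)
The plan is to prove all six convergences simultaneously by exploiting the spectral/Fourier picture, where convolution by $P_M$ becomes multiplication by $\wh P_M$ and differentiation becomes multiplication by $-i\lm$. The key point is that $P_M$ is an approximate identity: since $P_M(\lm) = \frac{M}{K_1}\sinc^4(M\lm)$ is a nonnegative function of unit mass concentrating at the origin as $M\to\infty$, its Fourier transform $\wh P_M$ satisfies $\wh P_M(t)\to 1$ pointwise with $0\le \wh P_M(t)\le 1$, and moreover $\wh P_M$ is supported in a fixed compact set (because $\sinc^4$ has compactly supported Fourier transform, being the fourth convolution power of an indicator). This compact support is exactly what makes $f_M$ the desired $M$-dependent process, and it will also be the technical engine of the proof.

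First I would treat the $r_{M,M}$ family, which is the cleanest. By~\eqref{eq: rMM} we have $r_{M,M}=r\cdot\wh P_M$, so $r_{M,M}\to r$ pointwise and $|r_{M,M}|\le |r|$; since $r\in L^2(\R)$ by hypothesis, dominated convergence gives $r_{M,M}\to r$ in $L^2$, and the uniform bound $|\wh P_M|\le 1$ together with $\wh P_M\to1$ (uniformly on compacts) plus the decay of $r$ gives $L^\infty$ convergence. For the derivatives I would differentiate~\eqref{eq: rMM}, writing $r_{M,M}'' = r''\wh P_M + 2r'\wh P_M' + r\wh P_M''$, and the analogous expression for $r'_{M,M}$. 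Here one uses Observation~\ref{obs: L2} to know $r,r',r''\in L^2(\R)$, together with uniform bounds on $\wh P_M,\wh P_M',\wh P_M''$ and the fact that $\wh P_M'\to0$, $\wh P_M''\to0$ (pointwise and boundedly, since $\wh P_M\to1$ is a ``flattening''). The leading term $r''\wh P_M\to r''$ as before, and the cross terms vanish.

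For the $r_{0,M}$ family I would work entirely on the spectral side, since there is no product formula as clean as~\eqref{eq: rMM}. Writing $r_{0,M}(t)=\int e^{-i\lm t} g(\lm)(g^2*P_M)^{1/2}(\lm)\,d\lm$, the natural approach is Plancherel: to show $r_{0,M}\to r$ in $L^2(\R)$ it suffices by the $L^2$-isometry of the Fourier transform to show that the spectral densities converge in $L^2$, i.e. that $g\cdot(g^2*P_M)^{1/2}\to g^2=p$ in $L^2(\R)$. For the derivatives, multiplying by $(-i\lm)^k$, I need $\lm^k g\,(g^2*P_M)^{1/2}\to \lm^k g^2$ in $L^2$. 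The $L^\infty$ statements then follow by a triangle-inequality comparison with the already-handled $r_{M,M}$ together with Cauchy--Schwarz, or directly by bounding $\|r_{0,M}^{(k)}-r^{(k)}\|_\infty$ by the $L^1$ norm of the difference of spectral densities and controlling that via the $L^2$ convergence and the compact frequency localization.

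The main obstacle, and the only genuinely nontrivial step, is the convergence $g\,(g^2*P_M)^{1/2}\to g^2$ (and its weighted versions) in $L^2$. The difficulty is that the square root interacts badly with the convolution: one cannot simply say $(g^2*P_M)^{1/2}\to g$ pointwise without care, because $g$ itself need not be continuous and the map $u\mapsto u^{1/2}$ is only Hölder-$\tfrac12$ at the origin. I would handle this by first establishing $g^2*P_M\to g^2$ in $L^1$ (standard approximate-identity convergence, valid since $g^2=p\in L^1$), hence along a subsequence pointwise a.e., and then using the elementary inequality $|\sqrt a-\sqrt b|^2\le |a-b|$ to pass to the square roots; the weight $\lm^k$ and the extra factor of $g$ are controlled by the hypotheses $\lm^k g\in L^2$ (Observation~\ref{obs: L2}) via Cauchy--Schwarz, taking care that convolution with $P_M$ does not create mass at high frequencies beyond what the compact support of $\wh P_M$ allows. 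Assembling these pieces, combined with a dominated-convergence argument to upgrade subsequential a.e. convergence to full $L^2$ convergence, completes the proof.
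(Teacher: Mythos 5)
Your plan is essentially the paper's proof: work on the spectral side, use that $P_M$ is a summability kernel to get $(\lm^j g^2)*P_M\to\lm^j g^2$ in $L^1$ and $L^2$, transfer back through the Fourier transform ($L^1\Rightarrow L^\infty$, $L^2\Rightarrow L^2$), handle the derivatives of $r_{M,M}=r\,\wh P_M$ by the Leibniz rule together with $\wh P_M^{\,'},\wh P_M^{\,''}\to0$, and tame the square root in $\wh r_{0,M}=g\,(g^2*P_M)^{1/2}$ via $|\sqrt a-\sqrt b|^2\le|a-b|$ and Cauchy--Schwarz. Two comments. First, the subsequence / a.e.\ / dominated-convergence detour is unnecessary: the inequality $|\sqrt a-\sqrt b|^2\le|a-b|$ gives $\norm{(g^2*P_M)^{1/2}-g}_2^2\le\norm{g^2*P_M-g^2}_1\to0$ in one line (this is exactly Observation~\ref{obs: L1 to L2}). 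Second, and more substantively, the justification you offer for the weighted convergence --- that ``convolution with $P_M$ does not create mass at high frequencies beyond what the compact support of $\wh P_M$ allows'' --- is not correct as stated: $\wh P_M$ is compactly supported in the \emph{time} variable, whereas the convolution $g^2*P_M$ takes place in the frequency variable, where $P_M$ is not compactly supported (its tails are of order $\lm^{-4}$, and it has no finite moment of order $3$ or higher). What your Cauchy--Schwarz step actually requires is $\norm{\lm^k(g^2*P_M)-\lm^k g^2}_2\to0$ for $k=1,2$, and one must note that $\lm^k(g^2*P_M)\ne(\lm^k g^2)*P_M$. The paper closes this by observing that $\lm^k(g^2*P_M)-\lm^k g^2$ is, up to constants, the spectral density of $r_{M,M}^{(k)}-r^{(k)}$, so its $L^2$ norm tends to $0$ by the part of the argument you have already carried out for $r_{M,M}$; alternatively one can estimate $\lm^k(g^2*P_M)-(\lm^k g^2)*P_M$ directly using $\int_\R|\nu|^k P_M(\nu)\,d\nu=O(M^{-k})$ for $k=1,2$. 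Either way the gap is fillable with material already present in your outline, but not by the compact-support fact you invoke.
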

We recall the definition of $M$-dependence.
\begin{defin*}[$M$-dependence]
Let $T\subseteq \R$, and $M\ge 0$.
A stochastic process $(X(t))_{t\in T}$ is \emph{$M$-dependent}
if for any $s_1,s_2\in T$ such that $s_2-s_1>M$, the sigma-algebras generated by $(X(t))_{t\le s_1}$ and $(X(t))_{t\ge s_2}$ are independent.
\end{defin*}
\begin{prop}\label{prop: f_m props}
The process $f_M$ is almost surely continuously differentiable, and $4\pi M$-dependent.
\end{prop}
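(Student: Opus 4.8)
The plan is to treat the two assertions separately: continuous differentiability follows from a moment computation, while the finite-dependence is exactly what the special choice of the kernel $P_M$ was designed to deliver.

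For almost-sure continuous differentiability, I would simply verify that the spectral measure $\rho_M$ of $f_M$ satisfies the log-moment condition \eqref{eq: log moment}, which was already noted to guarantee $C^1$ sample paths. Here $d\rho_M=(g^2*P_M)\,d\lm$ is a probability measure (since $\int P_M=1$ and $\int g^2=1$), so I need only bound $\int_\R \lm^2\log^{1+\al}(1+|\lm|)\,(g^2*P_M)(\lm)\,d\lm$. Writing out the convolution and substituting $\nu=\lm-\mu$, this becomes $\int\!\int(\nu+\mu)^2\log^{1+\al}(1+|\nu+\mu|)\,g^2(\nu)P_M(\mu)\,d\nu\,d\mu$. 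Using $(\nu+\mu)^2\lesssim\nu^2+\mu^2$ together with $\log^{1+\al}(1+|\nu+\mu|)\lesssim\log^{1+\al}(1+|\nu|)+\log^{1+\al}(1+|\mu|)$ (from $\log(1+a+b)\le\log(1+a)+\log(1+b)$), the double integral splits into a finite sum of products of one-variable integrals. Each factor is finite: the $g^2$-factors are finite because $g^2$ obeys \eqref{eq: log moment} by hypothesis, and the $P_M$-factors are finite because $P_M(\mu)\simeq\mu^{-4}$ at infinity, so all its moments of order below $3$ (with logarithmic corrections) converge. This part is routine.

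The substantive claim is $4\pi M$-dependence, and the crux is that $r_{M,M}(t)=r(t)\,\widehat P_M(t)$ vanishes for $|t|>4\pi M$, which I would establish from the compact support of the Fourier transform of $\sinc^4$. Since $\sinc$ is (up to normalisation) the Fourier transform of $\ind_{[-\pi,\pi]}$, Fourier duality shows that the transform of $\lm\mapsto\sinc(\lm)$ is supported in $[-\pi,\pi]$; hence the transform of $\sinc^4$, being the four-fold convolution of that transform, is supported in $[-4\pi,4\pi]$. The change of variables $\mu=M\lm$ gives $\widehat P_M(t)=\frac{M}{K_1}\int e^{-i\lm t}\sinc^4(M\lm)\,d\lm=\frac{1}{K_1}\,\widehat{\sinc^4}(t/M)$, which therefore vanishes once $|t/M|>4\pi$, i.e.\ once $|t|>4\pi M$. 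Consequently $r_{M,M}(t)=0$ for all $|t|>4\pi M$.

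Finally I would deduce independence from the vanishing of the covariance. Because $f_M$ is an integral of the \emph{real} symbol $(g^2*P_M)^{1/2}$ against the complex white noise $\W$, its pseudo-covariance $\E[f_M(t)f_M(s)]$ vanishes identically (the $d\W\,d\W$ terms cancel), so the joint law of any finite collection of values is governed entirely by the Hermitian covariances $r_{M,M}(t-s)$. For $s_2-s_1>4\pi M$, every pair $t\le s_1$, $t'\ge s_2$ has $t'-t>4\pi M$, so all Hermitian cross-covariances (and, trivially, all pseudo-covariances) between $\{f_M(t):t\le s_1\}$ and $\{f_M(t'):t'\ge s_2\}$ vanish; jointly proper complex Gaussian vectors with zero cross-covariance are independent. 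Passing from finite-dimensional independence to independence of the generated sigma-algebras is standard, using the sample-path continuity of $f_M$ to restrict to countably many (rational) times. The only point needing care is the complex bookkeeping --- recalling that independence of proper complex Gaussians requires both the Hermitian and the pseudo covariance to vanish --- but both are controlled here, so the argument closes.
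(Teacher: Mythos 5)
Your proof is correct. The $4\pi M$-dependence half is essentially the paper's argument: the paper also identifies $\wh{P}_M(t)=\tfrac1{K_1}\ind_{[-\pi,\pi]}^{*4}(t/M)$, deduces that $r_{M,M}=r\,\wh{P}_M$ is supported in $[-4\pi M,4\pi M]$ (Lemmas~\ref{lem: PM} and~\ref{lem: rMM}), and then declares the finite dependence immediate; you merely spell out the Gaussian bookkeeping (vanishing pseudo-covariance because the spectral symbol $(g^2*P_M)^{1/2}$ is real, independence of jointly proper complex Gaussian blocks with zero cross-covariance, and the passage to sigma-algebras), which is a welcome expansion of a step the paper leaves implicit. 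Where you genuinely diverge is the differentiability half. The paper works on the covariance side: it uses the Taylor expansion of $\wh{P}_M$ at $0$ (item 4 of Lemma~\ref{lem: PM}) together with \eqref{eq: r'' at 0} to show that $r_{M,M}''$ satisfies the same logarithmic modulus-of-continuity condition at the origin, and then invokes the converse result \cite{CL}*{Chapter 9, Lemma 2} to conclude that $\rho_M$ satisfies \eqref{eq: log moment} with any exponent $\al'<\al$. You instead verify \eqref{eq: log moment} for $\rho_M=(g^2*P_M)\,d\lm$ directly on the spectral side, splitting $(\nu+\mu)^2\log^{1+\al}(1+|\nu+\mu|)$ via subadditivity of $\log(1+\cdot)$ and using that $P_M$ has finite moments of order $2+\ep$ (the paper's \eqref{eq: ep moment}). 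Your route is more self-contained --- it avoids citing the Cram\'er--Leadbetter converse lemma --- and it even preserves the original exponent $\al$ rather than an arbitrary $\al'<\al$; the paper's route has the advantage of recycling machinery (the expansion of $\wh{P}_M$ near $0$) that it needs elsewhere anyway. Both are valid.
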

Further, $f_M$ approximates $f$ in the following sense, which we immediately deduce from the previous two propositions.
\begin{cor}\label{cor: f limits}
As $M\to\infty$, we have
\[
f_M(t)\rightarrow f(t) \quad \text{and} \quad f'_M(t)\rightarrow f'(t)
\]
in $L^2(\Pro)$, uniformly in $t\in \R$.
\end{cor}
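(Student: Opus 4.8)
The plan is to prove both convergences by a direct second-moment computation, exploiting the fact that $f$ and $f_M$ are driven by the \emph{same} white noise $\W$. Since $f_M(t)-f(t)$ is a centred Gaussian random variable, convergence in $L^2(\Pro)$ is equivalent to $\E[|f_M(t)-f(t)|^2]\to 0$ (and likewise for the derivatives); no control of higher moments is needed.

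First I would expand the square and use the spectral (It\^o) isometry for integrals against $\W$. Because the integrands defining $f$ and $f_M$ differ only in their amplitudes $g$ and $(g^2*P_M)^{1/2}$, one obtains
\[
\E\big[|f_M(t)-f(t)|^2\big]=r_{M,M}(0)-2\,r_{0,M}(0)+r(0),
\]
where I have used $\E[f_M(t)\ov{f(t)}]=\E[f(t)\ov{f_M(t)}]=r_{0,M}(0)$ (both are real, as $g$ and $(g^2*P_M)^{1/2}$ are real and nonnegative). The crucial observation is that the right-hand side does \emph{not} depend on $t$: the factor $e^{-i\lm t}$ has modulus one and cancels in each covariance taken at lag $0$. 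Hence uniformity in $t$ is automatic. By Proposition~\ref{prop: r limits} we have $r_{M,M}\to r$ and $r_{0,M}\to r$ in $L^\infty(\R)$, so in particular $r_{M,M}(0)\to r(0)=1$ and $r_{0,M}(0)\to 1$; the displayed expression therefore tends to $1+1-2=0$ as $M\to\infty$.

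For the derivatives I would run the identical argument one order up. Using $\E[|f'|^2]=-r''(0)$, $\E[|f'_M|^2]=-r''_{M,M}(0)$ and $\E[f'_M(t)\ov{f'(t)}]=-r''_{0,M}(0)$ (again independent of $t$ and real), I get
\[
\E\big[|f'_M(t)-f'(t)|^2\big]=-r''_{M,M}(0)+2\,r''_{0,M}(0)-r''(0),
\]
which converges to $-r''(0)+2r''(0)-r''(0)=0$ by the $L^\infty$-convergence $r''_{M,M},\,r''_{0,M}\to r''$ supplied by Proposition~\ref{prop: r limits}.

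There is no serious obstacle here; the analytic content sits entirely in Proposition~\ref{prop: r limits}, which the corollary merely specialises to lag $0$. The only points requiring (routine) care are the justification of differentiating the white-noise integral termwise --- legitimate since $\lm g(\lm)\in L^2(\R)$, equivalently $r'\in L^2(\R)$, and since Proposition~\ref{prop: f_m props} guarantees that $f_M$ is almost surely continuously differentiable so that $f'_M$ is genuinely its pointwise derivative --- and the remark that the $t$-independence of every second moment is precisely what upgrades the pointwise convergence at $0$ into the required uniform-in-$t$ statement.
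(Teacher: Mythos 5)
Your proof is correct and is precisely the deduction the paper has in mind: the paper dispatches this corollary with the remark that it is ``immediately deduced'' from Propositions~\ref{prop: r limits} and~\ref{prop: f_m props}, and your computation $\E[|f_M(t)-f(t)|^2]=r_{M,M}(0)-2r_{0,M}(0)+r(0)$ (and its second-derivative analogue), together with the observation that these expressions are independent of $t$ and tend to $0$ by the $L^\infty$ convergence in Proposition~\ref{prop: r limits}, is exactly that deduction made explicit. No discrepancy with the paper's approach.
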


We will now give a series of lemmata and observations which will lead to the proof of the previous two propositions.
\begin{lem}\label{lem: PM}
$\wh{P}_M(t)$ satisfies the following properties:

\begin{enumerate}
\item $\wh{P}_M(t)$ is twice continuously differentiable on $\R$.
\item $0\le \wh{P}_M(t)\le 1$ for all $t\in\R$.
\item $\wh{P}_M(t)=0$ for $|t|>4\pi M$.
\item For any $0<\ep<1$ we have $\wh{P}_M(t) = 1-\frac{K_2}{M^2} t^2  + O\left(\frac {t^{2+\ep}}{M^{2+\ep}}\right)$, as $t\to 0$, where
\footnote{Again, it is possible to compute $K_2=\tfrac{3}{8\pi^2}$.}
    $ K_2 = \frac 1 {2K_1} \int_\R \lm^2 \sinc^4(\lm) \ d\lm$ and the implicit constant depends only on $\ep$.
\end{enumerate}
\end{lem}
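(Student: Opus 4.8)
The plan is to identify $\wh{P}_M$ explicitly as a rescaling of the Fourier transform of $\sinc^4$ and then read off each property. First I would record the building blocks under the convention $\wh h(t)=\int_\R e^{-it\lm}h(\lm)\,d\lm$: since $\tfrac1{2\pi}\ind_{[-\pi,\pi]}$ has transform $\sinc$ (exactly the Sinc-kernel example), Fourier duality gives $\widehat{\sinc}=\ind_{[-\pi,\pi]}$, and the convolution theorem $\widehat{h_1 h_2}=\tfrac1{2\pi}\wh{h_1}*\wh{h_2}$ then makes $\widehat{\sinc^2}$ a nonnegative triangle supported on $[-2\pi,2\pi]$. Using the scaling relation $\widehat{h(M\,\cdot)}(t)=\tfrac1M\wh h(t/M)$ one gets $\wh{P}_M(t)=\tfrac1{K_1}\widehat{\sinc^4}(t/M)$, from which the support statement (3) is read off since $\widehat{\sinc^4}=\tfrac1{2\pi}\widehat{\sinc^2}*\widehat{\sinc^2}$ is supported on $[-4\pi,4\pi]$.

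For (2) and (1) I would write $P_M=\tfrac{M}{K_1}q_M^2$ with $q_M=\sinc^2(M\,\cdot)\ge 0$, so that $\wh{P}_M=\tfrac{M}{2\pi K_1}\,\wh{q_M}*\wh{q_M}$, where $\wh{q_M}(t)=\tfrac1{2\pi M}\max(0,2\pi-|t|/M)$ is a nonnegative triangle. Nonnegativity of $\wh{P}_M$ is then immediate as a convolution of nonnegative functions, while the bound $\wh{P}_M\le 1$ follows from positive-definiteness, since $P_M\ge0$ and $\int P_M=1$ force $|\wh{P}_M(t)|\le\wh{P}_M(0)=1$. The same self-convolution representation yields the $C^2$ regularity in (1): differentiating twice moves the derivatives onto the Lipschitz, compactly supported triangles, and $\wh{q_M}'*\wh{q_M}'$ is a convolution of two bounded compactly supported functions, hence continuous. (Alternatively, since $\sinc^4$ decays like $\lm^{-4}$ one has $\int_\R\lm^2 P_M(\lm)\,d\lm<\infty$, which permits differentiating under the integral sign twice.)

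The substance of the lemma is (4), where the plan is a truncated Taylor expansion of the exponential. Writing the order-two remainder $Q(\theta)=e^{-i\theta}-\big(1-i\theta-\tfrac{\theta^2}2\big)$, and using $\int P_M=1$, $\int\lm P_M=0$ (by symmetry), and the rescaling $\int_\R\lm^2 P_M(\lm)\,d\lm=\tfrac1{K_1 M^2}\int_\R u^2\sinc^4(u)\,du=\tfrac{2K_2}{M^2}$, one obtains
\[
\wh{P}_M(t)-\Big(1-\tfrac{K_2}{M^2}t^2\Big)=\int_\R Q(t\lm)\,P_M(\lm)\,d\lm.
\]
The main obstacle is that one cannot simply bound the remainder by $|\theta|^3$ and take one further moment, because $\int_\R u^3\sinc^4(u)\,du$ diverges ($\sinc^4$ decays only like $u^{-4}$). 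The resolution is to interpolate: combining $|Q(\theta)|\le\tfrac{|\theta|^3}6$ with $|Q(\theta)|\le|\theta|^2$ yields $|Q(\theta)|\le|\theta|^{2+\ep}$ for every $\ep\in[0,1]$, whence
\[
\Big|\wh{P}_M(t)-1+\tfrac{K_2}{M^2}t^2\Big|\le|t|^{2+\ep}\int_\R|\lm|^{2+\ep}P_M(\lm)\,d\lm=\frac{|t|^{2+\ep}}{K_1 M^{2+\ep}}\int_\R|u|^{2+\ep}\sinc^4(u)\,du.
\]
The last integral converges precisely when $2+\ep<3$, i.e.\ for $\ep<1$, which is exactly the restriction imposed in the statement; its value is finite and depends only on $\ep$, giving the claimed $O(t^{2+\ep}/M^{2+\ep})$ with an $\ep$-dependent implicit constant. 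Thus the only delicate point is matching the admissible range of $\ep$ to the borderline decay rate of $\sinc^4$, and the remaining steps are routine bookkeeping.
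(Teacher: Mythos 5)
Your proposal is correct and follows essentially the same route as the paper: identify $\wh{P}_M$ as a rescaled convolution power of $\ind_{[-\pi,\pi]}$ (giving the support, nonnegativity, and the bound $\wh{P}_M\le\wh{P}_M(0)=1$), use the finiteness of $\int|\lm|^{2+\ep}P_M$ for regularity, and obtain item 4 by a second-order Taylor expansion controlled by that same $(2+\ep)$-moment. Your interpolation bound $|Q(\theta)|\le\min\{|\theta|^2,|\theta|^3/6\}\le|\theta|^{2+\ep}$ is precisely the detail the paper leaves implicit in the phrase ``taking \eqref{eq: ep moment} into account.''
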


\begin{proof}
The first item follows from the fact that
\begin{equation}\label{eq: ep moment}
  \int_\R |\lm|^{2+\ep} P_M(\lm) \ d\lm <\infty
\end{equation}
for all $0< \ep < 1$. Notice that $\wh{\sinc}(t) = \ind_{[-\pi,\pi]}(t)$. So
\begin{align*}
  \wh{P}_M(t) =
  \wh{P}_1\left(\frac t M\right) =
  \frac 1 {K_1} (\wh{\sinc^4}) \left(\frac t M \right)
   = \frac 1 {K_1} \ind_{[-\pi,\pi]}^{*4} \left(\frac t M\right),
\end{align*}
where $*4$ denotes the fourth convolution power. Clearly, $\wh{P}_M(t)\ge 0$ for all $t\in \R$. Also, by basic properties of the Fourier transform we have $\wh{P}_M(t) \le \wh{P}_M(0)=\int_\R P_M =1$, which establishes the second item. Further, since $\ind_{[-\pi,\pi]}^{*4}$ is supported on $[-4\pi,4\pi]$, the third item follows. Finally notice that
\begin{align*}
  & \wh{P}_1(0) = \int_\R P_1 = 1,  \\
  & \wh{P}_1^{\,'}(0) = -i\int_\R \lm P_1(\lm) \ d\lm = 0, \\
  & \wh{P}_1^{\,''}(0) = -\int_\R \lm^2 P_1(\lm) \ d\lm = -2K_2,
\end{align*}
and by a standard Taylor expansion, taking \eqref{eq: ep moment} into account, the fourth item follows.

\end{proof}

\begin{lem}\label{lem: rMM}
The following hold:
\begin{enumerate}
\item  $r_{M,M}$ is a twice differentiable function on $\R$, supported on $[-4\pi M, 4\pi M]$.
\item $r_{M,M}(0)=r(0)=1$ and $r'_{M,M}(0)=r'(0)$.
\item $|r_{M,M}(t)|\le |r(t)|$ for all $t$.
\end{enumerate}
\end{lem}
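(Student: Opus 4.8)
The plan is to prove each of the three items of Lemma~\ref{lem: rMM} by exploiting the factorization $r_{M,M}(t) = r(t)\,\wh{P}_M(t)$ established in~\eqref{eq: rMM}, together with the properties of $\wh{P}_M$ collected in Lemma~\ref{lem: PM}. The overarching strategy is that every claimed property of $r_{M,M}$ reduces to a corresponding property of the factor $\wh{P}_M$ (smoothness, compact support, the bound $0\le\wh{P}_M\le 1$, and the second-order Taylor data at $0$) combined with the known regularity of $r$ under our standing assumption~\eqref{eq: log moment}, which guarantees that $r$ is twice continuously differentiable.

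For item~1, I would argue that $r$ is twice differentiable (by~\eqref{eq: log moment} and the discussion following it) and $\wh{P}_M$ is twice continuously differentiable by the first item of Lemma~\ref{lem: PM}; hence the product $r_{M,M}=r\cdot\wh{P}_M$ is twice differentiable by the Leibniz rule. The support statement is immediate: since $\wh{P}_M(t)=0$ for $|t|>4\pi M$ by the third item of Lemma~\ref{lem: PM}, the product vanishes there as well, so $r_{M,M}$ is supported on $[-4\pi M, 4\pi M]$. For item~3, the bound $0\le\wh{P}_M(t)\le 1$ from the second item of Lemma~\ref{lem: PM} gives directly $|r_{M,M}(t)| = |r(t)|\,|\wh{P}_M(t)| = |r(t)|\,\wh{P}_M(t) \le |r(t)|$.

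Item~2 is the one requiring an actual computation, and it is where I expect the only genuine (though still routine) work to lie, because it involves differentiating the product at $0$. I would use the Taylor data for $\wh{P}_M$ from the fourth item of Lemma~\ref{lem: PM}, namely $\wh{P}_M(0)=1$ and $\wh{P}_M^{\,\prime}(0)=0$ (the latter following from $\wh{P}_1^{\,\prime}(0)=0$ and the scaling $\wh{P}_M(t)=\wh{P}_1(t/M)$ used in the proof of Lemma~\ref{lem: PM}). Then $r_{M,M}(0)=r(0)\,\wh{P}_M(0)=r(0)=1$, and by the product rule
\[
r'_{M,M}(0) = r'(0)\,\wh{P}_M(0) + r(0)\,\wh{P}_M^{\,\prime}(0) = r'(0)\cdot 1 + 1\cdot 0 = r'(0).
\]

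The main obstacle, such as it is, will simply be assembling the normalization facts about $\wh{P}_M$ at the origin and confirming that the product rule applies, which is entirely controlled by Lemma~\ref{lem: PM}; there is no deep difficulty, and all three items follow once the factorization~\eqref{eq: rMM} and the four properties of $\wh{P}_M$ are in hand.
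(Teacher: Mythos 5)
Your proposal is correct and follows exactly the paper's route: the paper proves this lemma by remarking that it ``follows immediately'' from the factorization $r_{M,M}=r\,\wh{P}_M$ in \eqref{eq: rMM} together with Lemma~\ref{lem: PM} and the regularity of $r$, which is precisely the argument you spell out (including the product-rule computation at $0$ using $\wh{P}_M(0)=1$ and $\wh{P}_M^{\,\prime}(0)=0$).
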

Recalling that $r_{M,M}(t)=r(t)\wh{P}_M(t)$ (see~\eqref{eq: rMM}), Lemma~\ref{lem: rMM} follows immediately from Lemma~\ref{lem: PM} and our assumptions about $r$.

This previous lemma immediately implies that $f_M$ is a $4\pi M$-dependent process. The next lemma will complete the proof of Proposition~\ref{prop: f_m props}.
\begin{lem}
  Suppose that \eqref{eq: log moment} holds for $\alpha>0$. Then
  \begin{equation*}
    \int_{\R} \lm^2 \log^{1+\al'} (1+|\lm|) d\rho_{M}(\lm)<\infty
  \end{equation*}
  for any $\al'<\al$.
\end{lem}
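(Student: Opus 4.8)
The goal is to show that the spectral measure $\rho_M$ of the approximating process $f_M$ inherits the log-moment condition \eqref{eq: log moment}, with a slightly smaller exponent $\al'<\al$. Recall from~\eqref{eq: rMM} that $d\rho_M = (g^2 * P_M)\,d\lm$, i.e. $\rho_M$ is the convolution of $\rho$ with the probability density $P_M$. The plan is therefore to estimate the integral of $\lm^2 \log^{1+\al'}(1+|\lm|)$ against the convolved measure by splitting the log-moment weight across the convolution.

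First I would write out the integral explicitly using $\rho_M = \rho * P_M$ (as measures):
\begin{equation*}
\int_\R \lm^2 \log^{1+\al'}(1+|\lm|)\, d\rho_M(\lm) = \int_\R \int_\R (\mu+\nu)^2 \log^{1+\al'}(1+|\mu+\nu|)\, d\rho(\mu)\, P_M(\nu)\, d\nu.
\end{equation*}
The strategy is to bound the weight $(\mu+\nu)^2\log^{1+\al'}(1+|\mu+\nu|)$ by a product (or sum of products) of a factor depending only on $\mu$ and a factor depending only on $\nu$, so that the double integral factorizes into the finite quantity $\int \mu^2 \log^{1+\al}(1+|\mu|)\,d\rho(\mu)$ from~\eqref{eq: log moment} times a moment of $P_M$. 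The key elementary inequalities are $(\mu+\nu)^2 \le 2\mu^2 + 2\nu^2$ and the subadditivity-type bound $\log(1+|\mu+\nu|) \le \log(1+|\mu|) + \log(1+|\nu|)$. Combining these and expanding yields a finite sum of terms, each a product of a $\mu$-moment of $\rho$ against a $\nu$-moment of $P_M$.

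The crux is to verify that every resulting factor is finite. The $\mu$-factors are controlled by~\eqref{eq: log moment}: the worst term is $\mu^2 \log^{1+\al'}(1+|\mu|)$, which is dominated by $\mu^2\log^{1+\al}(1+|\mu|)$ since $\al'<\al$, and all lower-order terms (those with fewer powers of the logarithm) are likewise integrable against $\rho$. The $\nu$-factors are moments of $P_M$ of the form $\int \nu^2 \log^{1+\al'}(1+|\nu|)\,P_M(\nu)\,d\nu$ and lower; since $P_M(\nu) = \frac{M}{K_1}\sinc^4(M\nu)$ decays like $\nu^{-4}$, these are comfortably finite for every fixed $M$ (here we crucially use that the exponent on $\lm$ is $2<4$, matching the heuristic $\int|\lm|^{2+\ep}P_M\,d\lm<\infty$ from \eqref{eq: ep moment}, and the logarithmic factors do not disturb this). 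The main obstacle — and the only reason the exponent must be reduced from $\al$ to $\al'$ — is handling the cross term where the logarithm lands partly on $\mu$ and partly on $\nu$: after expanding $\log^{1+\al'}(1+|\mu+\nu|)$ one gets mixed products $\log^a(1+|\mu|)\log^b(1+|\nu|)$ with $a+b \le 1+\al'$, and one must ensure $a \le 1+\al$ so the $\mu$-integral stays within the budget of~\eqref{eq: log moment}, which is exactly where a small loss $\al-\al'>0$ is spent. I would organize this by bounding $\log^{1+\al'}(1+|\mu+\nu|) \lesssim \log^{1+\al'}(1+|\mu|) + \log^{1+\al'}(1+|\nu|)$ (up to constants), so that the logarithm never needs to be split and the $\mu$-weight is at most $\mu^2\log^{1+\al'}(1+|\mu|) \le \mu^2\log^{1+\al}(1+|\mu|)$, giving finiteness directly from~\eqref{eq: log moment} and the fast decay of $P_M$.
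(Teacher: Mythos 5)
Your proof is correct, and it takes a genuinely different route from the paper's. The paper argues entirely on the covariance side: it combines the Taylor expansion of $\wh{P}_M$ at the origin (Item 4 of Lemma~\ref{lem: PM}) with the modulus-of-continuity bound \eqref{eq: r'' at 0} to show that $r_{M,M}''$ satisfies $|r_{M,M}''(t)-r_{M,M}''(0)|\leq C|\log|t||^{-\al}$ near $t=0$, and then invokes the converse implication from Cram\'er--Leadbetter (Chapter 9, Lemma 2) to recover a log-moment for $\rho_M$; the drop from $\al$ to $\al'$ is the price of passing back and forth through that equivalence. You instead work directly on the spectral side, using $\rho_M=\rho*P_M$ together with $(\mu+\nu)^2\le 2\mu^2+2\nu^2$ and $\log(1+|\mu+\nu|)\le\log(1+|\mu|)+\log(1+|\nu|)$ (from $1+|\mu+\nu|\le(1+|\mu|)(1+|\nu|)$) to factor the double integral into finite $\rho$-moments times finite $P_M$-moments; the quartic decay of $P_M$ handles all the $\nu$-factors. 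This is more elementary and self-contained, and it actually yields the stronger conclusion with $\al'=\al$ (your closing observation that the logarithm never needs to be split is the right way to organize it; the intermediate worry about mixed products $\log^a(1+|\mu|)\log^b(1+|\nu|)$ is moot, since $a\le 1+\al'<1+\al$ automatically). One small caveat: your argument needs the $\nu$-moments $\int\nu^2\log^{1+\al'}(1+|\nu|)P_M(\nu)\,d\nu$ to be finite for each fixed $M$, which holds here precisely because the exponent $2$ on $\lm$ sits strictly below the decay rate $4$ of $\sinc^4$ --- worth stating explicitly, as it is the one place the specific choice of kernel enters.
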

\begin{proof}
  Combining Item 4 of Lemma~\ref{lem: PM} with \eqref{eq: r'' at 0} we see that for all $C>0$ we have
  \begin{equation*}
    |r_{M,M}''(t)-r_{M,M}''(0)|\leq\frac{C}{|\log|t||^{\al}}\quad\text{for }|t|\leq\delta(\al,C).
  \end{equation*}
  Then \cite{CL}*{Chapter 9, Lemma 2} completes the proof.
\end{proof}

For Proposition~\ref{prop: r limits} we shall need two further lemmas about the kernel $P_M$.

\begin{lem}\label{lem: sum ker}
For any $1\le p<\infty$ and $h\in L^p(\R)$, we have $P_M * h\conv{L^p} h$ as $M\to\infty$.
\end{lem}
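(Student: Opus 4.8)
The statement to prove is Lemma~\ref{lem: sum ker}: for any $1\le p<\infty$ and $h\in L^p(\R)$, we have $P_M * h\conv{L^p} h$ as $M\to\infty$. This is the classical approximate-identity convergence statement, so the plan is to verify that the family $\{P_M\}_{M>0}$ forms an approximate identity and then invoke the standard $L^p$-continuity of translation. Recall from the construction that $P_M(\lambda) = \frac{M}{K_1}\sinc^4(M\lambda)$, that $P_M\ge 0$, and that $\int_\R P_M = 1$; these are exactly the non-negativity and unit-mass properties of an approximate identity. The remaining property to establish is concentration near the origin: for every $\eta>0$, $\int_{|\lambda|>\eta} P_M(\lambda)\,d\lambda \to 0$ as $M\to\infty$.

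First I would establish the concentration property. By the scaling $P_M(\lambda) = M P_1(M\lambda)$ (where $P_1 = \frac{1}{K_1}\sinc^4$), the substitution $u = M\lambda$ gives $\int_{|\lambda|>\eta} P_M(\lambda)\,d\lambda = \int_{|u|>M\eta} P_1(u)\,du$. Since $P_1\in L^1(\R)$ (indeed $\int_\R P_1 = 1$ by the definition of $K_1$), the tail integral $\int_{|u|>M\eta}P_1(u)\,du\to 0$ as $M\to\infty$ for each fixed $\eta>0$. This is the only place where the specific form of $P_M$ enters, and it is immediate from integrability of $\sinc^4$.

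Next I would run the standard approximate-identity argument. Write
\[
(P_M * h)(x) - h(x) = \int_\R P_M(\lambda)\bigl(h(x-\lambda)-h(x)\bigr)\,d\lambda,
\]
using $\int_\R P_M = 1$. Taking $L^p$ norms in $x$ and applying Minkowski's integral inequality yields
\[
\norm{P_M * h - h}_{L^p} \le \int_\R P_M(\lambda)\,\norm{\tau_\lambda h - h}_{L^p}\,d\lambda,
\]
where $\tau_\lambda h(x) = h(x-\lambda)$. Denote $\omega(\lambda) = \norm{\tau_\lambda h - h}_{L^p}$; this is the $L^p$ modulus of continuity, which is bounded by $2\norm{h}_{L^p}$ and satisfies $\omega(\lambda)\to 0$ as $\lambda\to 0$ by continuity of translation in $L^p$ (valid precisely because $1\le p<\infty$). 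I would then split the integral at $|\lambda|\le\eta$ and $|\lambda|>\eta$: given $\ep>0$, choose $\eta$ so small that $\omega(\lambda)<\ep$ for $|\lambda|\le\eta$, giving a contribution at most $\ep\int_\R P_M = \ep$, and bound the outer contribution by $2\norm{h}_{L^p}\int_{|\lambda|>\eta}P_M(\lambda)\,d\lambda$, which tends to $0$ by the concentration property. Hence $\limsup_{M\to\infty}\norm{P_M * h - h}_{L^p}\le \ep$ for every $\ep>0$.

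The argument is entirely standard and presents no genuine obstacle; the only ingredients are the three approximate-identity axioms (all verified above) and the $L^p$-continuity of translation. The one point worth flagging is that $L^p$-continuity of translation fails for $p=\infty$, which is exactly why the hypothesis restricts to $1\le p<\infty$; this is consistent with the fact that in Proposition~\ref{prop: r limits} the $L^\infty$ convergence is handled by a separate argument rather than through this lemma.
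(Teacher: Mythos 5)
Your proof is correct and takes essentially the same route as the paper: the paper simply observes that $(P_M)$ is a summability kernel (non-negative, unit mass, concentrating at the origin) and cites the standard $L^p$ approximate-identity result from Katznelson, whereas you verify the same three properties and then write out the standard Minkowski-plus-translation-continuity argument in full.
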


\begin{proof}
Observe that $(P_M)_{M>0}$ is a \emph{summability kernel}; that is,
$P_M(\cdot)\ge 0$,  $\int_\R P_M = 1$, and for every fixed  $\ep>0$ the convergence $\lim_{M\to\infty}\int_{|x|>\ep}P_M =0$ holds.
A standard property of summability kernels (see~\cite{Kat}*{Ch. VI}) establishes our lemma.
\end{proof}

\begin{lem}\label{lem: deriv P_M hat to 0}
  $\wh{P}_M^{\,'},\wh{P}_M^{\,''}\to0$ in $L^2$ and $L^{\infty}$ as $M\to \infty$.
\end{lem}

\begin{proof}
  Notice that $\wh{P}_M(t)=\wh{P}_1\left(\tfrac tM \right)$ and so $\wh{P}_M^{\,'}(t)=\tfrac 1M \wh{P}_1^{\,'}\left(\tfrac tM \right)$. This implies that
  \begin{equation*}
    \norm{\wh{P}_M^{\,'}}_2^2 = \frac 1{M^2} \int_{\R} \big|\wh{P}_1^{\,'}(\tfrac tM)\big|^2\,dt = \frac 1{M} \int_{\R} \big|\wh{P}_1^{\,'}(s)\big|^2\,ds \to 0
  \end{equation*}
  and
  \begin{equation*}
    \norm{\wh{P}_M^{\,'}}_\infty = \frac 1{M} \norm{\wh{P}_1^{\,'}}_\infty\to 0.
  \end{equation*}
  Similarly
  \begin{equation*}
    \norm{\wh{P}_M^{\,''}}_2^2 = \frac 1{M^3} \int_{\R} \big|\wh{P}_1^{\,''}(s)\big|^2\,ds \to 0,
  \end{equation*}
  and
  \begin{equation*}
    \norm{\wh{P}_M^{\,''}}_\infty = \frac 1{M^2} \norm{\wh{P}_1^{\,''}}_\infty\to 0.
  \end{equation*}
\end{proof}

We will also need two simple observations.
\begin{obs}\label{obs: Fourier contin}
The following hold:
\begin{itemize}
\item If $h_n\overset{L^1}{\rightarrow} h$, then $\wh h_n \overset{L^\infty}{\rightarrow} \wh h$.
\item If $h_n \overset{L^2}{\rightarrow} h$, then $\wh h_n\overset{L^2}{\rightarrow} \wh h$.
\end{itemize}
\end{obs}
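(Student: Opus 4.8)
The plan is to recognize that both bullet points merely assert the continuity of a bounded linear operator---the Fourier transform---between the indicated normed spaces, and that such continuity is automatic from boundedness together with linearity, applied to the difference $h_n-h$. So the entire task reduces to recording the two relevant operator-norm bounds.

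First I would dispose of the $L^1\to L^\infty$ statement by a direct pointwise estimate. For $g\in L^1(\R)$ the transform $\wh g(t)=\int_\R e^{-i\lm t}g(\lm)\,d\lm$ obeys $|\wh g(t)|\le\int_\R|g(\lm)|\,d\lm=\norm{g}_1$ for every $t$, so $\norm{\wh g}_\infty\le\norm{g}_1$. Applying this to $g=h_n-h$ and using linearity of the Fourier transform gives $\norm{\wh h_n-\wh h}_\infty=\norm{\widehat{h_n-h}}_\infty\le\norm{h_n-h}_1\to0$, which is exactly the first claim.

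For the second statement I would invoke Plancherel's theorem rather than the integral formula, since for general $L^2$ functions the transform is defined by extension from $L^1\cap L^2$. Plancherel asserts that this extension is a bounded linear bijection of $L^2(\R)$ onto itself that preserves the $L^2$-norm up to the fixed constant dictated by our convention; for the normalization used in \eqref{eq: rho} one has $\norm{\wh g}_2=\sqrt{2\pi}\,\norm{g}_2$. In particular it is a bounded linear operator, so once more $\norm{\wh h_n-\wh h}_2=\norm{\widehat{h_n-h}}_2=\sqrt{2\pi}\,\norm{h_n-h}_2\to0$.

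There is no genuine obstacle here; the statement is elementary. The single point meriting a moment's care is that the $L^2$ transform is defined by density on $L^1\cap L^2$, so the second part rests on Plancherel's isometry property rather than on any naive pointwise bound. Granting that, both assertions are instances of the fact that a bounded linear map is sequentially continuous.
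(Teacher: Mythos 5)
Your proof is correct and is precisely the standard argument the paper has in mind (the paper states this Observation without proof, treating it as elementary): boundedness of the Fourier transform from $L^1$ to $L^\infty$ via the trivial pointwise estimate, and from $L^2$ to $L^2$ via Plancherel, applied to $h_n-h$. Nothing further is needed.
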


\begin{obs}\label{obs: L1 to L2}
  If $h,h_n\ge 0$ and $h_n^2\overset{L^1}{\rightarrow} h^2$, then
  $h_n\overset{L^2}{\rightarrow} h$.
\end{obs}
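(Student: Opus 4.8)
The plan is to reduce the desired $L^2$ convergence to the given $L^1$ convergence by means of a single elementary pointwise inequality, so that almost no analytic machinery is needed. The crucial observation is that for any two non-negative real numbers $a,b\ge 0$ one has $|a-b|\le a+b$, and therefore
\[
(a-b)^2 = |a-b|\cdot|a-b| \le |a-b|\,(a+b) = \big|a^2-b^2\big|.
\]
Since we are assuming $h_n,h\ge 0$, I would apply this inequality pointwise with $a=h_n(x)$ and $b=h(x)$ to obtain, for (almost) every $x\in\R$, the bound
\[
\big(h_n(x)-h(x)\big)^2 \le \big|h_n(x)^2 - h(x)^2\big|.
\]

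With this inequality in hand, the rest is immediate: integrating over $\R$ gives
\[
\norm{h_n-h}_2^2 = \int_\R \big(h_n-h\big)^2 \le \int_\R \big|h_n^2 - h^2\big| = \norm{h_n^2 - h^2}_1.
\]
The hypothesis $h_n^2 \overset{L^1}{\rightarrow} h^2$ says precisely that the right-hand side tends to $0$ as $n\to\infty$, whence $\norm{h_n-h}_2\to 0$, i.e.\ $h_n\overset{L^2}{\rightarrow} h$. This completes the argument.

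There is essentially no obstacle here beyond spotting the pointwise inequality; the only point requiring care is that the bound $|a-b|\le a+b$ (and hence the whole chain) genuinely uses the non-negativity of $h_n$ and $h$, which is exactly the hypothesis supplied. I would flag that the naive expansion $(h_n-h)^2 = h_n^2 - 2h_n h + h^2$ is unhelpful, since the cross term $\int h_n h$ is not directly controlled by the given convergence; the factorisation $a^2-b^2=(a-b)(a+b)$ is what makes the non-negativity pay off and lets one bypass the cross term entirely.
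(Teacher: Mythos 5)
Your proof is correct and follows exactly the same route as the paper: the pointwise inequality $|a-b|^2\le|a^2-b^2|$ for $a,b\ge0$, followed by integration. Nothing to add.
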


\begin{proof}
  Since for any $x,y\geq0$ we have $|x-y|^2\leq |x^2-y^2|$ it follows trivially that
  \begin{equation*}
    \int |h_n - h|^2 \le \int | h_n^2 -h^2| \to 0.
  \end{equation*}
\end{proof}

\begin{proof}[Proof of Proposition~\ref{prop: r limits}]
Recall our hypothesis that $r,r'$ and $r''\in L^2$. By Observation~\ref{obs: L2} we deduce that
\begin{equation*}
  \lm^j g^2(\lm) \in (L^1 \cap L^2)(\R)
\end{equation*}
Since $P_M$ is a summability kernel, we may apply Lemma~\ref{lem: sum ker} to see that
\begin{equation}\label{eq: conv}
   \left(\lm^j g^2(\lm) \right) * P_M \rightarrow \lm^j g^2(\lm) \text{  in } L^1 \text{ and } L^2\quad\text{for }j=0,1\text{ and }2.
\end{equation}
By Observation~\ref{obs: Fourier contin} this implies that
\begin{equation}\label{eq: r(j)PM}
  r^{(j)}\wh{P}_M \to r^{(j)} \text{ in } L^2 \text{ and } L^\infty.
\end{equation}

Recalling that
\begin{equation*}
  r_{M,M}=r\wh{P}_M
\end{equation*}
we see that the case $j=0$ in \eqref{eq: r(j)PM} is equivalent to $r_{M,M}\to r$. Further, since $|r(t)|\leq r(0)=1$, we have $r\wh{P}_M^{\,'}\to0$ in $L^2$ and $L^{\infty}$ by Lemma~\ref{lem: deriv P_M hat to 0}. This, combined with the case $j=1$ in \eqref{eq: r(j)PM} and
\begin{equation*}
  r_{M,M}'=r'\wh{P}_M + r\wh{P}_M^{\,'},
\end{equation*}
implies that $r_{M,M}'\to r'$ in $L^2$ and $L^{\infty}$. Arguing similarly we have $r_{M,M}''\to r''$ in $L^2$ and $L^{\infty}$.

It remains to prove the convergence of $r_{0,M}$ and its derivatives. First recall that $\wh{r}_{0,M}= g (g^2 * P_M)^{\frac 1 2}$. We have
\begin{equation*}
  \int_{\R}\lm^2 g(\lm) (g^2 * P_M)^{\frac 1 2}(\lm) d\lm \leq \left(\int_{\R}\lm^2 g(\lm)^2 d\lm\right)^{\tfrac12} \left(\int_{\R}\lm^2 (g^2 * P_M)(\lm) d\lm\right)^{\tfrac12}<+\infty
\end{equation*}
and so $r_{0,M}$ is indeed twice differentiable. By Observation~\ref{obs: L1 to L2}, the $L^1$ convergence in~\eqref{eq: conv} with $j=0$ implies that
\begin{equation*}
  \left( g^2 * P_M \right)^{\frac 1 2} \rightarrow  g \text{  in } L^2.
\end{equation*}
Thus for $j=0,1$ and $2$
\begin{equation*}
  \norm{r^{(j)}_{0,M}-r^{(j)}}_\infty \le \norm{\lm^j g\big( (g^2*{P_M})^{\frac 1 2}-g\big) }_1 \le \norm{\lm^j g}_2 \cdot \norm{\left( g^2 * P_M \right)^{\frac 1 2}-g}_2 \to 0, \quad M\to \infty,
\end{equation*}
which proves the desired $L^\infty$ convergence. As for $L^2$ convergence, we have
\begin{align*}
\frac 1{\sqrt{2\pi}}\norm{r^{(j)}_{0,M}-r^{(j)}}_2 &= \norm{\lm^j g\big( (g^2*{P_M})^{\frac 1 2}-g\big)}_2 \\
&= \norm{ \lm^{2j} g^2 \big( (g^2* P_M)^{\frac 1 2}-g \big)^2}_1^{\frac 1 2} \\
& \le \norm{\lm^{j}g^2}_2^{\frac 1 2} \norm{ \big( \lm^{j/2} ( g^2* P_M)^{1/2}- \lm^{j/2} g \big)^2}_2^{\frac 1 2}\\
& \le \norm{\lm^{j} g^2}_2^{\frac 1 2} \norm{ \lm^j(g^2*P_M) -\lm^j g^2}_2^{\frac 1 2}.
\end{align*}
The last inequality follows from the observation that $\norm{(\psi-\phi)^2}_2\le \norm{\psi^2-\phi^2}_2$ for $\psi, \phi\ge 0$. Now
\begin{equation*}
  \norm{ \lm^j(g^2*P_M) -\lm^j g^2}_2 = \frac 1{\sqrt{2\pi}}\norm{r_{M,M}^{(j)} - r^{(j)}}_2\to 0,\quad M\to\infty,
\end{equation*}
which completes the proof.

\end{proof}

\subsubsection{CLT for the approximating process}

In this subsection we prove that $\D_M(T)$ satisfies a CLT as $T\to\infty$.
\begin{prop}\label{prop: D_M clt}
For each fixed $M\ge 0$,
\[
\frac{\D_M(T)-\E[\D_M(T)]}{\sqrt{\var [\D_M(T)] }} \rightarrow \calN_\R(0,1)
\]
in distribution as $T\to\infty$.
\end{prop}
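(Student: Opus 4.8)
The plan is to exploit the finite-dependence of $f_M$ (Proposition~\ref{prop: f_m props}: $f_M$ is $4\pi M$-dependent and a.s. continuously differentiable) through a classical blocking argument in the spirit of Cuzick~\cite{Cuz}. Writing $\D_M(T)=\int_0^T \Ima\big(f_M'(t)/f_M(t)\big)\,dt$, the integrand is a stationary, $4\pi M$-dependent process; however it has \emph{infinite} pointwise variance (the ratio $f_M'(t)/f_M(t)$ is, conditionally on $f_M(t)$, an affine image of a ratio of independent complex Gaussians, which has heavy tails). Consequently one cannot feed this integrand into an off-the-shelf CLT for integrals of finite-variance $m$-dependent processes. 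The observation that rescues the argument is that the winding over any \emph{fixed finite interval} does have finite variance, by Theorem~\ref{thm: var} applied to $f_M$ (which satisfies~\eqref{eq: log moment}, as shown above). I would therefore work with block increments rather than with the pointwise integrand.

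Concretely, write $V_M(T)=\var[\D_M(T)]$, fix a large block length $a$ and a gap length $b>4\pi M$, and tile $[0,T]$ by alternating ``big'' blocks of length $a$ and ``gap'' blocks of length $b$. Let $\xi_k$ be the centred winding of $f_M$ over the $k$-th big block and $\eta_k$ the centred winding over the $k$-th gap, so that $\D_M(T)-\E[\D_M(T)]=W_a(T)+R_a(T)$, where $W_a(T)=\sum_{k=1}^n\xi_k$ collects the big blocks and $R_a(T)$ collects the gaps together with a bounded boundary term, with $n\sim T/(a+b)$. Because $f_M$ is $4\pi M$-dependent and consecutive big blocks are separated by gaps of length $b>4\pi M$, the $\xi_k$ are independent, and by stationarity identically distributed; each has finite variance $v_a=V_M(a)$ (Theorem~\ref{thm: var}) and $v_a>0$ (Theorem~\ref{thm: asym} applied to the non-degenerate $f_M$). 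Hence the classical Lindeberg--L\'evy theorem gives $W_a(T)/\sqrt{\var W_a(T)}\to\calN_\R(0,1)$ as $T\to\infty$, for each fixed $a$, with $\var W_a(T)=n\,v_a$. Similarly the $\eta_k$ are independent with common variance $v_b=V_M(b)$, so $\var R_a(T)\lesssim n\,v_b+O(1)$.

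To transfer this to $\D_M(T)$ and reconcile the normalisations I would invoke the linear-variance statement already proved in Section~\ref{sec: lin var} (the first part of Theorem~\ref{thm: CLT}), valid for $f_M$ since $r_{M,M}$ is twice differentiable and compactly supported, whence $r_{M,M},r_{M,M}',r_{M,M}''\in L^2(\R)$: this yields $V_M(T)\sim c_M T$ with $c_M\in(0,\infty)$, and likewise $v_a=V_M(a)\sim c_M a$. Consequently, as $T\to\infty$ with $a$ fixed, $\var W_a(T)/V_M(T)\to\theta_a:=V_M(a)/\big(c_M(a+b)\big)$ and $\var R_a(T)/V_M(T)\to\varepsilon_a$, with $\theta_a\to1$ and $\varepsilon_a\to0$ as $a\to\infty$. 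A characteristic-function argument then closes the proof. Setting $X(T)=(\D_M(T)-\E[\D_M(T)])/\sqrt{V_M(T)}$ and $X_a(T)=W_a(T)/\sqrt{\var W_a(T)}$, one has $X(T)=\sqrt{\theta_a'(T)}\,X_a(T)+R_a(T)/\sqrt{V_M(T)}$ with $\theta_a'(T)=\var W_a(T)/V_M(T)$, so for each fixed $\lambda$ the $L^2$-bound $\E\big|R_a(T)/\sqrt{V_M(T)}\big|\le\sqrt{\var R_a(T)/V_M(T)}$ and Slutsky's theorem give $\limsup_{T\to\infty}\big|\E[e^{i\lambda X(T)}]-e^{-\lambda^2\theta_a/2}\big|\le|\lambda|\sqrt{\varepsilon_a}$. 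Letting $a\to\infty$ (so $\theta_a\to1$, $\varepsilon_a\to0$) forces $\E[e^{i\lambda X(T)}]\to e^{-\lambda^2/2}$, and L\'evy's continuity theorem yields the claimed convergence.

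I expect the main obstacle to be conceptual rather than computational: recognising that the pointwise integrand has infinite variance, so that the correct objects to which the finite-variance $m$-dependent CLT applies are the block windings (finite by Theorem~\ref{thm: var}), which must be separated by gaps exceeding the dependence range to become independent. The secondary difficulty is the bookkeeping of the double limit --- first $T\to\infty$ with the block length $a$ fixed, then $a\to\infty$ --- which requires the linear-variance asymptotics to guarantee that the big blocks asymptotically carry the full variance ($\theta_a\to1$) while the gaps remain negligible ($\varepsilon_a\to0$).
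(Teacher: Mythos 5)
Your proof is correct, but it takes a genuinely different route from the paper's. The paper discretizes $\D_M(T)$ into unit-interval increments $\D_{M,n}=\int_{n-1}^n \Ima\{f_M'/f_M\}\,dt$, observes that these form a stationary, identically distributed, $\lceil 4\pi M\rceil$-dependent sequence with finite variance and at least linear variance growth (by Section~\ref{sec: lin var} applied to $f_M$), and invokes Diananda's CLT for $m$-dependent sequences wholesale; the only remaining work is to show the fractional leftover $\int_N^T$ is negligible via Chebyshev. You instead manufacture genuine independence by inserting gaps of length $b>4\pi M$ between big blocks of length $a$, apply only the classical Lindeberg--L\'evy theorem to the resulting i.i.d.\ block windings, and then run a two-parameter limit ($T\to\infty$ then $a\to\infty$) with a characteristic-function/Slutsky argument to absorb the gaps. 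This is essentially a self-contained re-derivation of the $m$-dependent CLT in this setting (the Bernstein big-block/small-block method), so your argument is more elementary and avoids the external citation, at the cost of the double-limit bookkeeping; both proofs rest on the same three inputs, namely the $4\pi M$-dependence of $f_M$, the finiteness of the variance of the winding over bounded intervals (your observation that the pointwise integrand has infinite variance, so one must work with block increments, is exactly the right and necessary point), and the asymptotic linearity $V_M(T)\sim c_M T$ from Section~\ref{sec: lin+clt}. All the individual steps you take check out: the joint independence of the big blocks follows from the Gaussian $M$-dependence by induction, $v_a>0$ follows from Theorem~\ref{thm: asym} since $f_M$ has an absolutely continuous (hence non-atomic) spectral measure, and the estimate $\limsup_T\bigl|\E[e^{i\lambda X(T)}]-e^{-\lambda^2\theta_a/2}\bigr|\le|\lambda|\sqrt{\varepsilon_a}$ with $\theta_a\to1$, $\varepsilon_a\to0$ closes the argument legitimately.
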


Our main tool is the following theorem of Diananda~\cite{Dian}*{Theorem 4}, which
guarantees a CLT for sums of $M$-dependent sequences.
\begin{thm}[Diananda]\label{thm: Dian}
  Let $(X_n)_{n\in\N}$ be an $M$-dependent sequence of identically distributed real-valued random variables, with mean $0$ and finite variance.
  Define $S_N = \sum_{n=1}^N X_n $, 
  and suppose that the variance of $S_N$ is at least linear, that is,
  \[
  \liminf_{N\to\infty} \frac {\var (S_N) }{N} > 0.
  \]
  Then
  \[
  \frac {S_N} {\sqrt{\var(S_N)}} \overset{N\to\infty}{\longrightarrow} \calN_\R(0,1)
  \]
  in distribution.
\end{thm}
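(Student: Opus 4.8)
The plan is to prove this through Bernstein's big-block/small-block decomposition, preceded by a truncation that reduces matters to bounded summands. The truncation is forced on us because the hypothesis provides only a finite second moment, and a Lindeberg- or Lyapunov-type condition cannot be verified directly on the $X_n$. Throughout I treat the sequence as strictly stationary (the setting of the intended application, Proposition~\ref{prop: D_M clt}); the only features of stationarity I use are that $c_k:=\cov(X_1,X_{1+k})$ depends on the lag alone and vanishes for $k>M$ by $M$-dependence, and that a block sum over an interval of length $\ell$ is distributed as $S_\ell$. As a first step I record the limiting variance: expanding $\var(S_N)=\sum_{i,j\le N}\cov(X_i,X_j)$ and using $M$-dependence gives
\[
\var(S_N)=N\gamma^2+O(1),\qquad \gamma^2:=c_0+2\sum_{k=1}^M c_k,
\]
so that $\var(S_N)/N\to\gamma^2$ and the hypothesis $\liminf_N\var(S_N)/N>0$ is precisely $\gamma^2>0$. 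Since then $\sqrt{\var(S_N)}=\sqrt N\,\gamma\,(1+o(1))$, it suffices to prove $S_N/(\sqrt N\,\gamma)\to\calN_\R(0,1)$.

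For the truncation, fix a level $c>0$ and set $Y_n=X_n\ind_{\{|X_n|\le c\}}-\E[X_n\ind_{\{|X_n|\le c\}}]$ and $Z_n=X_n-Y_n$. Both sequences are strictly stationary, centred, and---being pointwise functions of the $X_n$---still $M$-dependent, with $Z_n\to 0$ in $L^2$ as $c\to\infty$. Denoting by $\gamma_Y^2(c),\gamma_Z^2(c)$ the associated limiting variances (computed as above), the $L^2$-convergence $Y_n\to X_n$ forces the finitely many covariances to converge, whence $\gamma_Z^2(c)\to 0$ and $\gamma_Y^2(c)\to\gamma^2>0$ as $c\to\infty$. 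The crux is a central limit theorem for the bounded sequence $(Y_n)$, so I fix $c$ large enough that $\gamma_Y^2(c)>0$ and aim to show $S_N^Y/\sqrt N\to\calN_\R(0,\gamma_Y^2(c))$, where $S_N^Y=\sum_{n\le N}Y_n$.

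To prove this I partition $\{1,\dots,N\}$ into alternating big blocks of length $p=p(N)$ and small blocks of fixed length $q=M+1$, chosen so that $p\to\infty$ while $p=o(\sqrt N)$ (e.g.\ $p=\lfloor N^{1/3}\rfloor$); there are $k\simeq N/p\to\infty$ big blocks. Let $U_1,\dots,U_k$ be the big-block sums and let $V$ gather the small-block sums together with the final remainder. Since consecutive big blocks are separated by a gap of length $q>M$, an inductive use of $M$-dependence (the past before a gap is independent of the future after it) makes $U_1,\dots,U_k$ mutually independent, and by stationarity they are i.i.d.\ with $\var(U_i)=p\gamma_Y^2(c)+O(1)$. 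The small blocks are likewise mutually independent and there are $O(N/p)$ of them, so $\var(V)=O(N/p)+O(p)=o(N)$ and $V/\sqrt N\to 0$ in $L^2$, so $V$ may be discarded. Finally, $\sum_i U_i$ is a sum of i.i.d.\ variables with $|U_i|\le 2pc$, and its Lyapunov ratio is
\[
\frac{\sum_i\E|U_i|^3}{\big(\sum_i\var(U_i)\big)^{3/2}}\lesssim\frac{k\cdot pc\cdot\var(U_1)}{\big(k\,\var(U_1)\big)^{3/2}}\simeq\frac{pc}{\sqrt{k\,\var(U_1)}}\simeq\frac{c\,p}{\sqrt N}\longrightarrow 0,
\]
using $p=o(\sqrt N)$. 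The Lyapunov CLT yields $\sum_iU_i/\sqrt{k\,\var(U_1)}\to\calN_\R(0,1)$, and since $k\,\var(U_1)=N\gamma_Y^2(c)(1+o(1))$ this is the desired CLT for $(Y_n)$.

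It remains to undo the truncation. Writing $S_N=S_N^Y+S_N^Z$, the previous step gives $S_N^Y/\sqrt N\to\calN_\R(0,\gamma_Y^2(c))$ for each fixed $c$, while $\E\big[(S_N^Z/\sqrt N)^2\big]=\var(S_N^Z)/N\to\gamma_Z^2(c)$, with $\gamma_Z^2(c)\to 0$ as $c\to\infty$. A standard double-approximation lemma for convergence in distribution---of the same type as Lemma~\ref{lem: tool for CLT}, letting $N\to\infty$ for fixed $c$ and then $c\to\infty$---then upgrades these to $S_N/\sqrt N\to\calN_\R(0,\gamma^2)$, which is the assertion. The step I expect to be the main obstacle is the coordinated choice of scales: the block length $p$ must be large enough that $k\to\infty$ (so a CLT for the big blocks is meaningful) yet small enough ($p=o(\sqrt N)$) that the Lyapunov ratio vanishes, while the gap $q>M$ stays fixed so the small blocks remain negligible. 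Underlying this is the genuine difficulty that a bare finite-variance hypothesis necessitates the truncation detour and the attendant interchange of the two limiting procedures, which is exactly what the approximation lemma legitimises.
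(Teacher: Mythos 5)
Your proof is correct as written, but note that the paper contains no internal proof to compare against: Theorem~\ref{thm: Dian} is imported verbatim from Diananda's 1955 paper, which the text cites and then applies. What you have reconstructed is essentially the classical route behind that citation --- truncation at level $c$ to reduce to bounded summands (Diananda's refinement of the earlier Hoeffding--Robbins argument, which needed third moments), a Bernstein big-block/small-block decomposition with block length $p=\lfloor N^{1/3}\rfloor$ and fixed gaps of length $M+1$, a Lyapunov condition for the independent block sums, and a converging-together argument to remove the truncation. The individual steps all check out: $\var(S_N)=N\gamma^2+O(1)$ under stationarity; coordinatewise truncation preserves $M$-dependence and stationarity; your inductive remark is the right justification for \emph{mutual} independence of the block sums across gaps exceeding $M$; $\var(V)=O(N/p)+O(p)=o(N)$; the Lyapunov ratio is $\lesssim pc/\sqrt{N}\to0$ since $p=o(\sqrt N)$; and the de-truncation works because $\limsup_N \Pro\big(|S_N^Z|>\ep\sqrt N\big)\le \gamma_Z^2(c)/\ep^2\to0$ as $c\to\infty$ while $\calN_\R(0,\gamma_Y^2(c))\to\calN_\R(0,\gamma^2)$, so the standard two-parameter lemma (Billingsley's converging-together theorem, in the same spirit as Lemma~\ref{lem: tool for CLT}) applies.

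The one discrepancy worth recording is scope: the theorem hypothesizes only \emph{identically distributed} $M$-dependent variables, whereas you invoke strict stationarity repeatedly (lag-only covariances, convergence of $\var(S_N)/N$ to a single constant $\gamma^2$, and the fact that block sums are i.i.d.\ copies of $S_\ell$). You flagged this honestly, and it is harmless for the paper's purposes, since the sequence $\D_{M,n}$ in Proposition~\ref{prop: D_M clt} is stationary. Still, as written your argument proves a strictly weaker statement than Diananda's. The extension is not difficult but requires real modifications: without stationarity $\var(S_N)/N$ need not converge, so one must work with the self-normalized ratio $S_N/\sqrt{\var(S_N)}$ throughout; the big-block sums remain independent and bounded by $2pc$, so Lyapunov only needs $\sum_i\var(U_i)\sim\var(S_N^Y)\gtrsim N$, which follows from $\var(V)=o(N)$ and Cauchy--Schwarz on the cross term; and the truncation error is controlled uniformly via $\var(S_N^Z)\le(2M+1)N\,\var(Z_1)$, using that at most $(2M+1)N$ covariances are nonzero and each is bounded by $\var(Z_1)$ since the $Z_n$ are identically distributed. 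With those adjustments your scheme yields the full statement; without them it covers exactly the case the paper uses.
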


Applying it, and accounting for differences between discrete and continuous time, we now prove our proposition.
\begin{proof}[Proof of Proposition~\ref{prop: D_M clt}]
Fix $M> 0$ throughout the proof.
Since $f_M$ is a $(4\pi M)$-dependent process, so too is the sequence
\[
\D_{M,n} =\int_{n-1}^n \Ima\left\{\frac{f'_M(t)}{f_M(t)}\right\} \ dt, \quad n\in\N.
\]

We may apply Diananda's Theorem to $\ov{\D}_{M,n}= \D_{M,n}-\E\D_{M,n}$, as this is a sequence of identically distributed random variables (due to stationarity), and
$ \var\left(\sum_{n=1}^N \D_{M,n} \right) = \var \ \D_M(N) \sim C_M N$ (by Section~\ref{sec: lin var}). Let $T\ge 0$, $N=\lfloor T \rfloor$ and define $E_{M}(T)= \D_M(T) - \sum_{n=1}^N \D_{M,n}$. We now show that the error $E_M(T)$ is small in an appropriate sense, which allows us to conclude asymptotic normality for $\D_M(T)$.

Denote $\ov{E}_M(T) = E_M(T)-\E E_M(T)$ and notice that
\[
E_{M}(T) =\int_{N}^T \Ima\left\{\frac{f'_M(t)}{f_M(t)}\right\} \ dt.
\]
Asymptotic normality of $\D_M(T)$ will follow if we show that
\[
\frac{\ov{E}_M(T)}{\var \ \D_M(T)} \to 0 \text{ in probability, as } T\to\infty,
\]
and
\[
\frac{\var\left(\sum_{n=1}^N \D_{M,n} \right)} {\var \ \D_M(T) } \rightarrow 1, \text{ as } T\to\infty.
\]

Since $ \var\ \D_M(T) \sim C_M T$ as $T\to \infty$, we have
$\frac{\var\left(\sum_{n=1}^N \D_{M,n} \right)} {\var \ \D_M(T) } =
\frac {\var \ \D_M(N)} {\var \ \D_M(T)} \sim \frac N T \sim 1$, as $T\to\infty$.

Moreover, by stationarity
$ \var \ E_M(T)  \le \sup_{0\le t\le 1}\var \D_M(t) = c_M <\infty $,
and so for each fixed $\ep>0$,
\[
\Pro \left( \left| \frac{\ov{E}_M(T) }{\sqrt{\var \ \D_M(T)} } \right| > \ep \right) \le \frac 1 {\ep^2} \frac {\var\ E_M(T)}{\var \ \D_M(T)} \rightarrow 0,
\]
as $T\to\infty$.
The proof is complete.
\end{proof}

\subsubsection{Quantifying the approximation}
In this section we show that, when appropriately normalized, ${\D}_M(T)$ approaches $\D(T)$ in $L^2(\Pro)$ as $M\to\infty$, uniformly in $T$. This is stated precisely in the following proposition.
For brevity, we write $\ov{\D}(T)= \D(T)-\E\D(T)$ and $\ov{\D}_M(T)=\D_M-\E \D_M(T). $

\begin{prop}\label{prop: goal}
There exists $T_0>0$ satisfying the following:
Given $\ep>0$, there exists $M_0=M_0(\ep)>0$ such that for all $T\ge T_0$ and $M\ge M_0$ we have
\begin{equation*}
\E \left[  \ \left( \frac{\ov \D_M(T)}{\sqrt{\var \D_M(T)}}
- \frac{\ov \D(T)}{\sqrt{\var \D(T)}}\right)^2 \   \right] < \ep.
\end{equation*}
\end{prop}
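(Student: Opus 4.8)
The plan is to bound the quantity in the proposition by the ratio $\var[\D(T)-\D_M(T)]\big/\sqrt{\var\D(T)\,\var\D_M(T)}$, and then to show that the numerator is $o(T)$ as $M\to\infty$ while the denominator is $\gtrsim T$, both uniformly. Write $U=\ov\D(T)$, $V=\ov\D_M(T)$, $\sigma^2=\var\D(T)$, $\sigma_M^2=\var\D_M(T)$. Expanding the square and using $\E[UV]=\cov(\D(T),\D_M(T))$ together with the identity $\cov(\D,\D_M)=\tfrac12\big(\sigma^2+\sigma_M^2-\var[\D-\D_M]\big)$ gives
\[
\E\!\left[\Big(\tfrac{V}{\sigma_M}-\tfrac{U}{\sigma}\Big)^2\right]
= \frac{\var[\D-\D_M]-(\sigma-\sigma_M)^2}{\sigma\sigma_M}
\le \frac{\var[\D(T)-\D_M(T)]}{\sqrt{\var\D(T)\,\var\D_M(T)}}.
\]
For the denominator I would apply Theorem~\ref{thm: asym} to both $f$ and $f_M$. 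The lower bound there depends only on the local constant $r'(0)^2-r''(0)>0$ (Lemma~\ref{lem: |r|=1}); since $r_{M,M}'(0)=r'(0)$ and $r_{M,M}\to r$ together with two derivatives uniformly on compacts (Lemma~\ref{lem: rMM} and Proposition~\ref{prop: r limits}), the constants in the proof of Theorem~\ref{thm: asym} may be taken uniform in $M$ for $M$ large. This yields $T_0,M_0,c>0$ with $\sqrt{\var\D(T)\,\var\D_M(T)}\ge cT$ for all $T\ge T_0$ and $M\ge M_0$; crucially this $T_0$ is independent of $\ep$.

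The real work is to prove $\var[\D(T)-\D_M(T)]\le\eta(M)\,T$ with $\eta(M)\to0$. Since $f$ and $f_M$ are driven by the same white noise, the pair $(f,f_M)$ is jointly Gaussian and stationary, with cross-covariance $r_{0,M}(t-s)=\E[f(t)\ov{f_M(s)}]$. Expanding $\var[\D-\D_M]=\var\D+\var\D_M-2\cov(\D,\D_M)$ and applying Lemma~\ref{lem: K}(b),(c) to each bilinear form — exactly as in the derivation of \eqref{eq: K} in Section~\ref{sec: var form}, now also to the cross terms via the $2\times2$ Gaussian vectors $(f(t),f_M(s))$ — produces, by joint stationarity and Lemma~\ref{lem: int}, the boundary-term-free identity
\[
\frac{\var[\D(T)-\D_M(T)]}{T}=\int_{-T}^T\Big(1-\frac{|x|}{T}\Big)L_M(x)\,dx,\qquad L_M=K+K_M-2K_{0,M},
\]
with $K$ as in \eqref{eq: K}, $K_M$ the analogous kernel for $f_M$, and $K_{0,M}$ the cross-kernel coming from Lemma~\ref{lem: K} applied to $(f(t),f_M(s))$. (Using \eqref{eq: K} rather than the integrated-by-parts form of Proposition~\ref{prop: IBP} avoids boundary terms, which keeps the numerator controlled purely by $M$.) It then suffices to show $\|L_M\|_{L^1(\R)}\to0$, since then $\var[\D-\D_M]\le T\|L_M\|_1$ and the claim follows from the denominator bound. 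Two simplifications are available: because $r\in L^2$ the spectral measure is absolutely continuous, so $\{|r|=1\}=\{0\}$ by Lemma~\ref{lem: |r|=1}, leaving no singular points to track away from the origin; and because $r_{0,M}(0)=\int g\,(g^2*P_M)^{1/2}<1$ strictly, the cross-kernel $K_{0,M}$ has no singularity at all for finite $M$.

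To establish $\|L_M\|_1\to0$ I would split $\R$ into a fixed window $[-a,a]$ and its complement. On the tail, choosing $a$ so that $|r|,|r_{M,M}|,|r_{0,M}|<\tfrac12$, each kernel is dominated — by the same estimates that give \eqref{eq: |K| bound} — by integrable expressions quadratic in the relevant covariance and its derivatives (of the type $|r|^2$, $|r\,r'|$, $|r'|^2$, $|r\,r''|$); the $L^2$ convergences $r_{M,M},r_{0,M}\to r$ and their first two derivatives to $r',r''$ from Proposition~\ref{prop: r limits} then force $L_M\to K+K-2K=0$ in $L^1$ off the window. The main obstacle is the window $[-a,a]$, where $K$ and $K_M$ carry genuine logarithmic singularities at the origin with coefficients $C=r'(0)^2-r''(0)$ and $C_M\to C$, while $K_{0,M}$ has only a singularity smoothed on scale $\sqrt{\ep_M}$, where $\ep_M:=1-|r_{0,M}(0)|^2\to0$. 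Using the uniform $C^2$-convergence of the kernels and the local expansions $1-|r_{M,M}(x)|^2=C_Mx^2+o(x^2)$ and $1-|r_{0,M}(x)|^2=\ep_M+b_Mx^2+\cdots$, I would show that each of $\int_{-a}^aK$, $\int_{-a}^aK_M$ and $\int_{-a}^a\Rea K_{0,M}$ tends to the common value $\int_{-a}^aK\,dx$, so that $\int_{-a}^aL_M\to0$. The delicate point is the contribution of $K_{0,M}$ on $|x|\lesssim\sqrt{\ep_M}$, which is of order $\sqrt{\ep_M}\log\tfrac1{\ep_M}\to0$; controlling this cancellation between the genuine and the smoothed logarithmic singularities is where the argument must really be made. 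Combining the tail and window estimates gives $\|L_M\|_1\to0$, and with the denominator bound this completes the proof.
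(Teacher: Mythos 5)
Your proposal is correct and follows essentially the same route as the paper: reduce to showing $\tfrac1T\E[(\ov\D_M(T)-\ov\D(T))^2]\to0$ uniformly in $T$ (using the uniform linear lower bound on the variances), express this quantity via the kernel $K-2K_{0,M}+K_{M,M}$ exactly as in Section~\ref{sec: var form}, and prove $L^1(\R)$ convergence of $K_{0,M}$ and $K_{M,M}$ to $K$ by splitting into a bounded region (uniform integrable majorant of the form $x^2+\log\tfrac1{|x|}$ plus dominated convergence, which already absorbs the logarithmic singularities you worry about) and a tail where $|r|$ is small (Taylor expansion plus the $L^2$ convergences of Proposition~\ref{prop: r limits}). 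The only organizational difference is that the paper takes the tail to be $\{|r|<\delta\}$ with $\delta=\delta(\ep)$ small so that the Taylor remainders contribute only $O(\delta^2)$, whereas your fixed window $\{|r|<\tfrac12\}$ requires a generalized dominated convergence argument in place of that bookkeeping.
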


In fact, given our previous variance computations, it is enough to prove the following.

\begin{prop}\label{prop: goal T}
There exists $T_0>0$ satisfying the following:
Given $\ep>0$, there exists $M_0=M_0(\ep)>0$ such that for all $T\ge T_0$ and $M\ge M_0$ we have
\begin{equation*}
\frac 1 T \E \left[  \ \left( \ov \D_M(T)-\ov \D(T)\right)^2 \   \right] < \ep.
\end{equation*}
\end{prop}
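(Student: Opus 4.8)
The plan is to reduce the claim to a statement about a single covariance kernel and then estimate that kernel by dominated convergence. Since the pair $(f,f_M)$ is built from the \emph{same} white noise $\W$, it is jointly Gaussian and jointly stationary; hence, writing $Y(t)=\Ima\{f_M'(t)/f_M(t)-f'(t)/f(t)\}$, we have $\D_M(T)-\D(T)=\int_0^T Y(t)\,dt$, so that
\begin{equation*}
\E\left[(\ov\D_M(T)-\ov\D(T))^2\right]=\var[\D_M(T)-\D(T)]=\int_0^T\int_0^T\cov[Y(t),Y(s)]\,dt\,ds,
\end{equation*}
and the integrand depends only on $t-s$. Setting $\gamma_M(x):=\cov[Y(x),Y(0)]$ and applying Lemma~\ref{lem: int} gives
\begin{equation*}
\frac1T\E\left[(\ov\D_M(T)-\ov\D(T))^2\right]=\int_{-T}^T\left(1-\frac{|x|}T\right)\gamma_M(x)\,dx .
\end{equation*}
(The interchange of expectation and integration is justified as in \eqref{eq: var long}, applying Lemma~\ref{lem: Fub} to $f$ and to $f_M$ and Cauchy--Schwarz to the cross terms.) Since $0\le 1-|x|/T\le1$, it then suffices to prove the $T$-free bound $\int_{\R}|\gamma_M(x)|\,dx\to 0$ as $M\to\infty$, which yields the displayed inequality $<\ep$ (for $M$ large) simultaneously for every $T>0$.

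Expanding $Y$ by bilinearity of the covariance gives $\gamma_M=K_M+K-K_{M,0}-K_{0,M}$, where $K$ and $K_M$ are the variance kernels of $f$ and $f_M$ (as in \eqref{eq: K}, with $r$ replaced by $r_{M,M}$ for $K_M$), and $K_{M,0}(x)=\cov[\Ima\{f_M'(x)/f_M(x)\},\Ima\{f'(0)/f(0)\}]$, $K_{0,M}(x)=\cov[\Ima\{f'(x)/f(x)\},\Ima\{f_M'(0)/f_M(0)\}]$ are cross kernels. Each term is computed by the general form of Lemma~\ref{lem: K} applied to the relevant entries of the joint covariance structure; the diagonal kernels involve $r,r_{M,M}$ and their derivatives, the cross kernels in addition $r_{0,M},r_{M,0}$ and their derivatives. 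Two observations drive the proof: (i) for $x\ne0$ each of the four kernels is a \emph{continuous} function of the finitely many covariance entries away from the set where a correlation has modulus $1$, so Proposition~\ref{prop: r limits} (giving $r_{M,M}^{(j)},r_{0,M}^{(j)},r_{M,0}^{(j)}\to r^{(j)}$ for $j=0,1,2$) forces all four kernels to converge pointwise to $K(x)$, whence $\gamma_M(x)\to K(x)+K(x)-K(x)-K(x)=0$; and (ii) one has a uniform (in $M$) integrable majorant for $|\gamma_M|$. Granting (i) and (ii), dominated convergence closes the argument.

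For the majorant I would split $\R$ at a small $\delta>0$. On $\{|x|\ge\delta\}$ all relevant correlations stay bounded away from $1$ uniformly in $M$: indeed $|r_{M,M}|\le|r|$ by Lemma~\ref{lem: rMM}, while $r_{0,M},r_{M,0}\to r$ in $L^\infty$ by Proposition~\ref{prop: r limits}, and $\sup_{|x|\ge\delta}|r(x)|<1$ (recall that $r\in L^2$ forces $\{|r|=1\}=\{0\}$ via Lemma~\ref{lem: |r|=1} and Observation~\ref{obs: L2}). Consequently the prefactors $\tfrac1{1-|\cdot|^2}$ are bounded there, and the estimate behind \eqref{eq: |K| bound} gives $|\gamma_M(x)|\lesssim\sum_{\bullet}\big(|r_\bullet(x)|^2+|r_\bullet(x)r_\bullet'(x)|+|r_\bullet'(x)|^2\big)$, dominated by a fixed $L^1(\R)$ function via Proposition~\ref{prop: r limits}. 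On $\{|x|<\delta\}$ the four kernels each blow up logarithmically, and here lies the main difficulty: I must show their singular parts essentially cancel, leaving $|\gamma_M(x)|\lesssim\log(1/|x|)$ uniformly in $M$. The mechanism is that the leading singular coefficients of $K$ and of $K_M$ agree in the limit (both tend to $r'(0)^2-r''(0)$, since $r_{M,M}'(0)=r'(0)$ and $r_{M,M}''(0)=r''(0)-2K_2/M^2\to r''(0)$ by Lemmas~\ref{lem: PM}--\ref{lem: rMM}), while the cross kernels develop matching singularities as $r_{0,M}(0),r_{M,0}(0)\to1$; the quadratic lower bounds $1-|r_\bullet(x)|^2\ge c\,x^2$, valid uniformly in $M$ near $0$ (via the uniform control of second derivatives at $0$ from Proposition~\ref{prop: r limits} and Lemma~\ref{lem: |r|=1}), convert these logarithms into a common $\log(1/|x|)$ bound.

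The hard part is precisely this uniform-in-$M$ logarithmic domination near the origin: each of the four kernels is individually large there (of size $\log\frac1{1-|r_\bullet(x)|^2}$, which tends to $\infty$ with $M$ for the cross kernels), and one must extract enough cancellation in $\gamma_M=K_M+K-K_{M,0}-K_{0,M}$ to obtain a bound by an $M$-independent integrable function. I expect this to require carrying the explicit cross-kernel formulae from Lemma~\ref{lem: K}(b),(c) through a careful second-order Taylor expansion at $x=0$, with all remainders controlled uniformly in $M$ by Proposition~\ref{prop: r limits}. Once the majorant is in place, dominated convergence yields $\int_\R|\gamma_M|\to0$, and the asserted bound follows for all $T\ge T_0$ (in fact for all $T>0$).
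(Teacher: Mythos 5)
Your overall strategy is the same as the paper's: reduce the claim to the $T$-free statement $\int_\R|K-2K_{0,M}+K_{M,M}|\,dx\to0$ (your $\gamma_M$), then split the line into a region where the correlations are close to $1$ (handled by a uniform-in-$M$ integrable majorant plus pointwise convergence and dominated convergence) and a region where they are small (handled by Taylor-expanding the kernels in powers of $|r_\bullet|$ and using the $L^2$ convergences of Proposition~\ref{prop: r limits}). The paper splits according to the size of $|r(x)|$ rather than of $|x|$, which is slightly cleaner because the bound behind \eqref{eq: |K| bound} needs $|r_\bullet|$ small (the term $\log\frac1{1-|r_\bullet|^2}\cdot\frac{r''_\bullet}{r_\bullet}$ is controlled near zeros of $r_\bullet$ only after the expansion $\log\frac1{1-|r_\bullet|^2}=|r_\bullet|^2+O(|r_\bullet|^4)$); also, on that region one should not look for a fixed $L^1$ majorant (norm convergence in $L^2$ does not produce one) but instead estimate $\int|K_M-K|$ directly via $\||r_M'|^2-|r'|^2\|_1\le\|r_M'-r'\|_2\|r_M'+r'\|_2\to0$ and its analogues, which is what the paper does. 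These are fixable inaccuracies.

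The genuine gap is the step you yourself flag as ``the hard part'': the uniform-in-$M$ integrable majorant near the origin, which you do not carry out, and for which your proposed mechanism is the wrong one. No cancellation among the four kernels is needed there. Since $1-|r_\bullet(x)|^2\gtrsim x^2$ uniformly in $M$ on a fixed neighbourhood of $0$ (and $|r_\bullet|$ is bounded below there), each kernel \emph{separately} satisfies $|K_\bullet(x)|\le L\,(x^2+\log\frac1{|x|})$ with $L$ independent of $M$: the logarithmic factor is at most $2\log\frac1{|x|}+O(1)$ uniformly in $M$ (contrary to your assertion that it ``tends to $\infty$ with $M$'' for the cross kernels --- that would contradict the quadratic lower bound you invoke two sentences earlier), and its coefficient $\Rea R'_\bullet$ is uniformly bounded by Proposition~\ref{prop: r limits}. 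This is exactly the paper's estimate on the set $A=\{|r|\ge\delta\}\setminus\{0\}$, mimicking \eqref{eq: integ bnd}. Your plan to instead extract cancellation of singular parts via a second-order Taylor expansion of the four kernels at $x=0$ is chasing a difficulty that is not there, and since you explicitly leave it as an expectation rather than an argument, the proof is incomplete at its central point.
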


%

\begin{proof}[Proof of Proposition~\ref{prop: goal T}]
  Repeating the arguments of Section~\ref{sec: var form} and using Lemma~\ref{lem: rMM}, we get
  \[
  \E \left[  \ \left( \ov \D_M(T)-\ov \D(T)\right)^2 \   \right]
  = T \int_{-T}^T \left(1-\frac{|x|}{T} \right) \Big( K(x) - 2 K_{0,M}(x) + K_{M,M}(x) \Big) \ dx,
  \]
  where $K(x)$ is as before (recall~\eqref{eq: K}), and
  \[
  K_*(x) =
  \begin{cases}
    \frac 1 2 |r'_*(x)|^2, & \text{if } r_*(x)=0\\
    \frac{|r_*(x)|^2}{1-|r_*(x)|^2}\ \Ima^2 \left\{\frac {r'_*(x)}{r_*(x)}- r'(0) \right\}
    -\frac 1 4 \log \frac 1 {1-|r_*(x)|^2}\ \frac {d^2}{dx^2} \left( \log |r_*(x)|^2 \right) , &  \text{otherwise.}
  \end{cases}
  \]
  Here $*$ may be either the index $(0,M)$ or $(M,M)$.
  We leave $K$, $K_{0,M}$ and $K_{M,M}$ undefined at $x=0$.
  We therefore have
  \[
  \frac 1 T \E \left[ \left( \ov \D_M(T) -\ov \D(T) \right)^2 \right]  \le  \int_\R  \left| K(x) - 2 K_{0,M}(x) + K_{M,M}(x) \right| \ dx.
  \]
  Thus it is enough to show that
\begin{equation}\label{eq: KM to K}
\textbf{Goal: }\qquad K_{0,M} \conv{L1} K \quad \text{ and }\quad K_{M,M} \conv{L1} K.
\end{equation}

We shall give a unified proof for both limits. We use the notation $(r_M, K_M)$ to represent either the pair $(r_{0,M},K_{0,M})$ or the pair $(r_{M,M},K_{M,M})$. We will repeatedly use that, by Proposition~\ref{prop: r limits}, $r_M, r'_M$ and $r''_M$ converge to $r,r'$ and $r''$ respectively, in $L^\infty(\R)$.
(Notice that by Lemma~\ref{lem: |r|=1}, $K(x)$ is a finite number whenever $x\ne 0$.)
In particular, this implies that
\begin{equation}\label{eq: point conv}
  K_M(x) \longrightarrow K(x),\qquad\text{pointwise},
\end{equation}
for $x\ne 0$.

We will prove $L^1$ convergence separately on two different subsets of $\R$, which are chosen according to the size of $r$. Fix some $\delta>0$ (to be specified later), and write
\begin{align*}
  A & = \{ x: \: |r(x)|\ge \delta \}\setminus\{0\} \\
  B & = \{ x: \: |r(x)| < \delta \}
\end{align*}

We begin with the convergence on $A$. Notice that $A$ is bounded (since $\lim_{x\to\pm\infty}|r(x)|=0$).
Denote $R_M = \frac {r'_M}{r_M}$. Similar to~\eqref{eq: integ bnd}, we have
\begin{align*}
|K_M(x)| &\le
 \frac{|r_M(x)|^2}{1-|r_M(x)|^2}\ \left| R_M(x)+R_M(-x) -2 R(0) \right|^2
+  \frac 1 2 \log \frac 1 {1-|r_M(x)|^2} \left| R_M'(x) \right|^2 \\
 & \le  L\cdot (x^2 +\log \frac 1 x ),
\end{align*}
where $L$ is a constant, uniform in $M$. Thus $K_M$ has an integrable majorant on $A$ and so \eqref{eq: point conv} and the dominated convergence theorem yield, for any fixed $\delta>0$,
\begin{equation}\label{eq: conv A}
  \int_A \left|K_M-K\right| \to 0.
\end{equation}

Next we turn to convergence on $B$.
Note that, since $r_M\to r$ in $L^\infty$, we may assume that $|r_M(x)| \le 2\delta$ for all $x\in B$.
Using the Taylor expansions
\[
\frac{1}{1-|r_M|^2} = 1+O(|r_M|^2), \quad
\log \frac{1}{1-|r_M|^2} = |r_M|^2 +O(|r_M|^4),
\]
we have for $x\in B$ and $r_M(x)\neq0$
\begin{align}\label{eq: big expand}
  K_M(x) = & -\frac 1 4 \frac{|r_M(x)|^2 }{1-|r_M(x)|^2}  \Bigg[ \left(\frac{r'_M(x)}{r_M(x)} \right)^2 + \ov{\left(\frac{r'_M(x)}{r_M(x)} \right)}^2 -  2\left| \frac{r'_M(x)}{r_M(x)} \right|^2
-4 r'(0) \frac{r'_M(x)}{r_M(x)}\notag \\
&\qquad\qquad\qquad\qquad\qquad + 4 r'(0) \ov{ \left(\frac{r'_M(x)}{r_M(x)} \right) } + 4 r'(0)^2 \Bigg] \notag \\
  &+ \frac 1 4 \log \frac 1 {1-|r_M(x)|^2}
  \left[ \left(\frac{r'_M(x)}{r_M(x)} \right)^2 + \ov{\left(\frac{r'_M(x)}{r_M(x)}  \right)}^2
  - \frac{r''_M(x)}{r_M(x)} -\ov{\left( \frac{r''_M(x)}{r_M(x)} \right)}
\right]  \notag \\
= &\frac 1 2 |r_M'(x)|^2 -  r'(0)^2  |r_M(x)|^2 + r'(0) \ov{ r_M(x)} r'_M(x)-  r'(0) r_M(x) \ov{r'_M(x)} \\
& -\frac 1 4 r''_M(x)\ov{r_M(x)}-\frac 1 4 \ov{r''_M(x)}r_M(x) \notag \\
& + O( |r_M(x) r'_M(x)|^2 + |r_M(x)|^4 + |r'_M(x)r_M(x)^3| + |r''_M(x)r_M(x)^3| ) \notag.
\end{align}

Notice that when $r_M(x)=0$ we have simply $K_M(x) = \frac 1 2 |r'_M(x)|^2$, which also coincides with~\eqref{eq: big expand} in this case.
The same arguments may be applied to get an expression for $K(x)$, given by replacing $r_M$ by $r$ everywhere in the right-hand side of~\eqref{eq: big expand}. Therefore, we have

\begin{align*}
\int_B |K_M-K| & \le  \frac 1 2 \int_B \big| |r'_M|^2 -|r'|^2 \big|
+ |r'(0)|^2 \int_B \big| |r_M|^2-|r|^2\big| \\
& +2|r'(0)|\int_B\big|\ov{r_M} r'_M-\ov{r}r'\big|
 +\frac 1 2 \int_B |\ov{r_M} r_M''-\ov{r}r''| \\
& + \delta^2 O\big(\norm{r'}_2^2  + \norm{r}_2^2  + \norm{r'}_2\norm{r}_2 + \norm{r''}_2\norm{r}_2\big)
\end{align*}

Let $\ep>0$ be given.
By the $L^2$ convergences in Proposition~\ref{prop: r limits}, and by observing that if $\psi_n\conv{L^2}\psi$ and $\phi_n\conv{L^2}\phi$ then
$\psi_n \phi_n \conv{L^1} \psi \phi$, we get that for large enough $M$
\[
\int_B |K_M-K| \le \ep + c_0 \delta^2 ,
\]
where $c_0$ is a constant (depending only on $r$). Thus, by choosing $\delta=\delta(\ep)$ appropriately, we get
\[
 \int_B \left| K_M(x)-K(x) \right|  \ dx <2\ep,
\]
for large enough $M$. Together with~\eqref{eq: conv A}, this establishes the goal~\eqref{eq: KM to K}.

\end{proof}

\subsection{Conclusion: Proof of the CLT in Theorem~\ref{thm: CLT}}
At last, we conclude the proof of the central limit theorem appearing in~\eqref{eq: clt}.
We apply Lemma~\ref{lem: tool for CLT} with $X(T) = \D (T)$ and $X_M(T)=\D_M(T)$.
The first condition (a CLT for $\D_M$) is guaranteed by Proposition~\ref{prop: D_M clt}.
The second condition (a uniform $L^2$ approximation) is guaranteed by Proposition~\ref{prop: goal}.
Thus Lemma~\ref{lem: tool for CLT} implies that $\D(T)$ satisfies a CLT in the sense of~\eqref{eq: clt}, and we are done.

\section{Sub-quadratic variance: Theorem~\ref{thm: sub-quad} }\label{sec: sub-quad}
Lastly, we include the proof of Theorem~\ref{thm: sub-quad}.

\begin{proof}[Proof of Theorem~\ref{thm: sub-quad}]

Since $\rho$ has no atoms, $f$ is an ergodic process (this is the classical Fomin-Grenander-Maruyama theorem, see~\cite{F0}*{Theorem 4} and the references therein). By standard arguments, this also implies that $\D(T)$ is ergodic. Recall that the first and second moment of $\D(T)$ are finite (this is precisely Lemma~\ref{lem: Fub}).
Thus, by Von-Neumann's ergodic theorem, we have
\begin{equation}\label{eq: erg}
\lim_{T\to\infty}\frac {\D(T)} T  = \E \Delta(1),
\end{equation}
where the convergence is both in $L^1$ and $L^2$ (see~\cite{Walters}*{Cor. 1.14.1}).
We conclude that
$$ \lim_{T\to\infty}\frac {\V} {T^2}  =0.$$

\end{proof}

\vspace{5pt}
{\bf{Acknowledgements:}} We are grateful to Mikhail Sodin for suggesting the project and for useful discussions. We thank Igor Wigman for many insightful comments, and Baruch Horovitz for a detailed conversation about the motivations coming from phyiscs.

\vspace{1cm}

\begin{center}
  {\sc References}
\end{center}
\begin{biblist}[\normalsize]



\bib{AT}{book}{
	author={Adler, R.J.},
	author={Taylor, J.E.},
	title={Random Fields and Geometry},
	series={Springer Monographs in Mathematics},
	publisher={Springer},
	year={2007}
}

\bib{CL}{book}{
	author={Cram\'{e}r, H.},
	author={Leadbetter, M.R.},
	title={Stationary and related stochastic processes: sample function properties and their applications},
	publisher={Dover publications},
	year ={2004},
	note = {(first publised in 1967 by Wiley series)}
}

\bib{Cuz}{article}{
    	author={Cuzick, J.},
    	title={A central limit theorem for the number of zeros of a stationary Gaussian process},
	journal={Ann. Probab.},
	volume= {4},
	date={1976},
	pages={ 547-556}
}

\bib{Dian}{article}{
	author={Diananda, P.H.},
	title={The central limit theorem for $m$-dependent variables},
	journal={Proc. Cambridge Philos. Soc.},
	volume={51},
	date={1955},
	pages={92-95}
}

\bib{DV}{book}{
	author={Doney, R.},
	author={Vakeroudis, S.},
	title={Windings of planar Stable Processes},
	year={2013},
	series={Séminaire de Probabilités XLV, Lecture Notes in Mathematics},
	publisher={Springer},
	volume={2078},
	note={p. 277--300}
}

\bib{DEH}{article}{
	author={Le Doussal, P.},
	author={Etzioni, Y.},
	author={Horovitz, B.},
	title={Winding of planar gaussian processes},
	journal={Journal of Statistical Mechanics: Theory and Experiment},
	volume= {5},
	date={2009},
	pages={P07012}
}

\bib{DK}{article}{
	author={Drossel, B.},
	author={Kardar, M.},
	title={Winding angle distributions for random walks and flux lines},
	journal={ Phys. Rev. E},
	volume={53},
	pages={5861},
	date={1996}
}

\bib{DB}{article}{
	author={Duplantier, B.},
	author={Blinder, I.A.},
	title={Harmonic measure and winding of random conformal paths: A Coulomb gas perspective},
	journal={Nuclear Physics B},
	volume = {802},
	date={2008},
	pages={494-513}
}

\bib{EHD2012}{article}{
	author={Etzioni, Y.},
	author={Horovitz, B.},
	author={Le Doussal, P.},
	title={Rings and Coulomb boxes in dissipative environments},
	journal={Phys. Rev. B},
	volume={86},
	pages={235406},
	date={2012}
}

\bib{F0}{article}{
	author= {Feldheim, N.},
	title={Zeroes of Gaussian analytic functions with translation-invariant distribution},
	journal={Israel Journal of Mathematics},
	date={2013},
	volume={195},
	pages={317-345}
}

\bib{Feld}{article}{
	author = {Feldheim, N.},
	title = {Variance of the number of zeroes of shift-invariant Gaussian analytic functions},
	date = {2015},
	journal = {arXiv: 1309.2111}
}

\bib{GW}{article}{
	author={Granville, A.},
	author={Wigman, I.},
	title={The distribution of the zeros of random trigonometric polynomials},
	journal={American Journal of Mathematics},
	volume={133 (2)},
	date={ 2011},
	pages={295-357}
}

\bib{GF}{article}{
	author={Grosberg, A.},
	author={Frisch, H.},
	title={Winding angle distribution for planar random walk, polymer ring entangled with an obstacle, and all that: Spitzer-Edwards-Prager-Frisch model revisited},
	journal={Journal of Physics A: Math. and Gen.},
	volume={37 (8)},
	pages={3071},
	date={2004}
}

\bib{HD}{article}{
	author={Hagendor, C.},
	author={Le Doussal, P.},
	date={2008},
	title={SLE on double-connected domains and the winding of loop-erased random walks},
	journal={J. Stat. Phys.},
	volume={133},
	pages={231-254}
}

\bib{Vakpoly}{article}{
	author={Holcman, D.},
	author={Yor, M.},
	author={Vakeroudis, S.},
	title={The Mean First Rotation Time of a planar polymer},
	date={2011},
	journal={Jour. Stat. Phys.},
	volume={143 (6)},
	pages={1074 --1095}
}

\bib{GAFbook}{book}{
	author={Hough, J.B.},
	author={Krishnapur, M.},
	author={Peres, Y.},
	author={Virag, B.},
	title={Zeroes of Gaussian analytic functions and determinantal processes},
	series={University Lecture Series},
	volume={51},
	publisher={American Mathematical Society},
	year={2009}
}

\bib{JT}{article}{
	author={Jessen, B.},
	author={Tornehave, H.},
	title={Mean motions and almost periodic functions},
	journal={Acta Math.},
	volume={77},
	pages={137-279},
	date={1945}
}

\bib{Kah}{book}{
	author={Kahane, J.-P.},
	title={Some random series of functions},
	series={Cambridge Studies in Advanced Mathematics},
	volume={5},
	publisher = {Cambridge University Press},
	year = {1993},
	edition={second edition}
}

\bib{Kat}{book}{
	author={Katznelson, Y.},
	title= {An Introduction to Harmonic Analysis},
	edition={third edition},
	series={Cambridge University Press},
	year={2004}
}

\bib{KCM}{article}{
	author={Kundu, A.},
	author={Comtet, A.},
	author={Majumdar, S.N.},
	title={Winding statistics of a Brownian particle on a ring},
	journal={J. Phys. A: Math. Theor.},
	volume={47},
	pages={385001},
	date={2014}
}

\bib{Kratz}{article}{
	author = {Kratz, M.F.},
	title={Level crossings and other level functionals of stationary Gaussian processes},
	journal = {Probability Surveys},
	volume={3},
	date={2006},
	pages={230-288}
}

\bib{Mal}{article}{
    	author={Malevich, T.L.},
    	title = {Asymptotic normality of the number of crossings of level zero by a Gaussian process},
	date={1969},
	journal={Theor. Prob. Appl.},
	volume={14},
	pages={287-295}
}


\bib{PY1}{article}{
	author={Pitman, J.},
	author={Yor, M.},
	title={Asymptotic Laws of Planar Brownian Motion},
	journal={Ann. Probab.},
	volume={14:3},
	date={1986},
	pages={733-779}
}

\bib{PY2}{article}{
	author={Pitman, J.},
	author={Yor, M.},
	title={Further Asymptotic Laws of Planar Brownian Motion},
	journal={Ann. Probab.},
	volume={17:3},
	date={1989},
	pages={965-1011}
}


\bib{S}{article}{
	author={Saleur, H.},
	title={The winding angle distribution for Brownian and SAW revisited},
	date={1993},
	journal={arXiv:hep-th/9310034}
}

\bib{Slud}{article}{
	author={Slud, E.},
	title={Multiple Wiener-Ito integral expansions for level-crossing-count functionals},
	journal={Prob. Th. Rel. Fields},
	volume={87},
	date={1991},
	pages={349-364}
}

\bib{Slud94}{article}{
	author={Slud, E.},
	title={MWI Representation of the Number of Curve-Crossings by a Differentiable Gaussian Process, with Applications},
	journal={Ann. Prob.},
	volume={22 (3)},
	date={1994},
	pages={1355--1380}
}


\bib{Spitzer}{article}{
	author={Spitzer, F.},
	title={Some theorems concerning 2-dimensional Brownian motion},
	journal={Trans. Amer. Math. Soc.},
	volume = {87},
	date={1958},
	pages={187-197}
}

\bib{VakOU}{article}{
	author={Vakeroudis, S.},
	title={On the windings of complex-valued Ornstein-Uhlenbeck processes driven by a Brownian motion and by a Stable process},
	date={2015},
	journal={Stochastics: Intern. Jour. of Prob. Stoch. Proc.},
	volume={87 (5)},
	pages={766--793}
}

\bib{Walters}{book}{
	author={Walters, P.},
	title={An introduction to ergodic theory},
	series={Graduate texts in Mathematics},
	volume={79},
	publisher={Springer-Verlag NY},
	year={1982}
}

\bib{WW}{article}{
	author={Wieland, B.},
	author={Wilson, D.B.},
	title={Winding angle variance of Fortuin-Kasteleyn contours},
	journal={Phys. Rev. E},
	volume={68},
	pages={056101},
	date={2003},
	note={see also arXiv:1002.3220}
}

\end{biblist}

\end{document}